\title{On the secondary Steenrod algebra} 
\author{Christian Nassau}  
\email{nassau@nullhomotopie.de, \url{http://nullhomotopie.de}}   % You may include the URL for your home page along with your email
\keywords{secondary Steenrod algebra}
\subjclass{55S20}
\newcommand{\grade}[1]{\vert{#1}\vert}
\newcommand{\myref}[1]{(\ref{#1})}
\newcommand{\FF}{{\mathbb F}}
\newcommand{\ZZ}{{\mathbb Z}}
\newcommand{\GG}{{\mathbb G}}
\newcommand{\id}{{\rm id}}
\newcommand{\Sq}{{\rm Sq}}
\newcommand{\us}{_\ast}
\newcommand{\os}{^\ast}
\DeclareMathOperator{\cont}{\daleth}
\newcommand{\EBP}{{\mathrm{EBP}}}
\newcommand{\BP}{{\mathrm{BP}}}
\newcommand{\tef}X %{\text{\Taurus}}
\newcommand{\krypto}Y %{\text{\Pisces}}
\newcommand{\nproj}{\rho}
\newcommand{\gppol}{\Phi}
\newcommand{\cmap}{{\mathfrak c}}
\newcommand{\pmap}{{\mathfrak p}}
\newcommand{\catAlg}[1]{{{\rm Alg}^{\rm c}_{{#1}}}}
\DeclareMathOperator{\fotimes}{\hat\otimes}
\DeclareMathOperator{\totimes}{\widetilde{\otimes}}
\DeclareMathOperator{\cfotimes}{\widehat{\otimes}}
\DeclareMathOperator{\op}{op}
\DeclareMathOperator{\Hom}{Hom}
\DeclareMathOperator{\Sym}{Sym}
\DeclareMathOperator{\im}{im}
\DeclareMathOperator{\coker}{coker}
\DeclareMathOperator{\funcgraph}{graph}
\newcommand{\coz}{{\mathfrak X}} %{{\gimel}}
\theoremstyle{plain}
\newtheorem{theorem}{Theorem}
\newtheorem{thm}[theorem]{Theorem}
\newtheorem{lemma}[theorem]{Lemma}
\newtheorem{coro}[theorem]{Corollary}
\theoremstyle{remark}
\newtheorem{remark}[theorem]{Remark}
\numberwithin{equation}{section} 
\numberwithin{theorem}{section}
\begin{document} 
 
\begin{abstract} 
  We introduce a new model for the secondary Steenrod algebra
  at the prime $2$
  which is both smaller and more accessible than the 
  original construction of H.-J.\ Baues.
  
  We also explain how $\BP$ can be used to 
  define a variant of
  the secondary Steenrod algebra at odd primes.
%\\ \\ {\Huge $$\text{DRAFT VERSION \RCSRevision}$$}
      % Abstracts are required.
       % Please place the text of your abstract here.
\end{abstract} 
\date\today
\maketitle
\tableofcontents

%%%% **** The text of the paper starts here **** %%%%

  \begin{section}{Introduction}
    Let $A$ be the Steenrod algebra.
    In \cite{baues}, H.-J.\ Baues has constructed an exact sequence $B_\bullet$
    \begin{align}\label{bausec}
      \xymatrix{
        A \ar@{ >->}[r] & B_1 \ar[r]^-{\partial} & B_0 \ar@{ ->>}[r] & A
      }
    \end{align}
    which captures the algebraic structure of secondary cohomology 
    operations in ordinary mod $p$ cohomology. 
    This sequence is called the {\em secondary Steenrod algebra}
    and its knowledge allows, among other things, to give a purely algebraic
    description of the $d_2$-differential in the classical Adams spectral sequence
    (see \cite{MR2193337}).

    Unfortunately, the construction of $B_\bullet$ is not very explicit
    and apparently not many topologists have become familiar with it.
    The aim of the present note is to show that there is
    a smaller and much more accessible 
    model which captures the same information.
    In fact our model is so simple that we can describe it in this introduction:

    Fix $p=2$ and 
    let $D_0$ be the Hopf algebra that represents power series
    \begin{align*}%\label{powdef4}
      f(x) = \sum_{k\ge 0} \xi_kx^{2^k} + \sum_{0\le k<l} 2\xi_{k,l}x^{2^k+2^l}
    \end{align*}
    under composition modulo $4$.
    There is a natural map $\pi:D_0\twoheadrightarrow A$ and a decomposition
    \begin{align}\label{d0decomp}
      D_0 &= \ZZ/4\{\Sq(R)\} \oplus \sum_{-1\le k<l} Y_{k,l}A
    \end{align}
    where $\Sq(R), Y_{k,l}\in D_0$ are dual to $\xi^R$ resp.\ $\xi_{k+1,l+1}$ 
    with respect to the natural basis $\{\, \xi^R,\, 2\xi^R\xi_{k,l}\,\}$ of 
    ${D_0}\us=\ZZ/4[\xi_n,2\xi_{k,l}]$.

    Here are some computations that can help to become familiar with $D_0$: 
    $\Sq^1\Sq^1 = 2\Sq^2+ Y_{-1,0}$, $\Sq^1Y_{-1,0}=Y_{-1,0}\Sq^1+2\Sq(0,1)$.
    Let $Q_k = \Sq(\Delta_{k+1})$ for the exponent sequence $\Delta_k$ 
    with $\xi^{\Delta_k}=\xi_k$
    and $P_t^s = \Sq(2^s\Delta_t)$.
    Then $Q_0Q_k = \Sq(\Delta_1+\Delta_{k+1}) + Y_{-1,k}$ and $[Q_0,Q_k]=Y_{-1,k}$ if $k>0$.
    One also finds
    \begin{align*}
      P_t^s P_t^s &= \begin{cases}
        2 P_t^{s+1} & \text{\qquad($s+1<t$),} \\
        2 P_t^{s+1} + Y_{t-2,2s} \Sq\left((2^s-1)\Delta_{t}\right) & \text{\qquad($s+1 = t$).}
      \end{cases}
    \end{align*}
    So for example $\Sq(0,2) \cdot \Sq(0,2)=2 \Sq(0,4) + Y_{0,2} \Sq(0,1)$.
    More computations can be found in Figure \ref{ademtab}.    

    For products involving $Y_{k,l}$ there is the simple formula
    \begin{align}\label{yklcomm}
      a Y_{k,l} &= \sum_{i,j\ge 0} Y_{k+i,l+j} \cont(\xi_i^{2^{k+1}}\xi_j^{2^{l+1}},a)
      \intertext{if we interpret the $Y_{k,l}$ with $k\ge l$ as}
      \label{yklrel}
      Y_{k,l} &= \begin{cases} 
        Y_{l,k} & \text{\qquad($l<k$),} \\
        2\Sq(\Delta_{k+2})  & \text{\qquad($l=k$).}
      \end{cases}
    \end{align}
    Here we have written $\cont(p,a)$ for the contraction of $a\in A$ by $p\in A\us$
    defined via $\langle \cont(p,a),q\rangle = \langle a,pq\rangle$ for $q\in A\us$.
    Let 
    %$\kappa:A\rightarrow A$ be
    %Kristensen's 
    $\kappa(a)=\cont(\xi_1,a)$. % be Kristensen's derivation.

    %(A+\mu_0A + \sum_{\substack{-1\le k,l\\(k,l)\not=(-1,-1)}} U_{k,l} A)/\mathord{\sim}
    We now define our model $D_\bullet$ for the secondary Steenrod algebra 
    to be the sequence
    \begin{align*}
      \xymatrix@R=-0.5em{
        A \ar@{ >->}[r] & 
        {(A+\mu_0A + \,\,\raisebox{0pt}[0pt][0pt]{$\sum\nolimits_{-1\le k,\,0\le l}$}\,\, U_{k,l} A)/\mathord{\sim}}
        \ar[r]^-{\partial} & D_0 \ar@{ ->>}[r]^-{\pi} & A. \\
        & {\underbrace{{\hphantom{{(A+\mu_0A + \,\,{\sum\nolimits_{-1\le k,\,0\le l}}\,\, U_{k,l} A)/\mathord{\sim}}}}}_{=:D_1}}
      }
    \end{align*}  
    $D_1$ is an $A$-bimodule via $a\mu_0 = \mu_0a + \kappa(a)$
    and 
    \begin{align}\label{ucomm}
      a U_{k,l} &= \sum_{i,j\ge 0} U_{k+i,l+j} \cont(\xi_i^{2^{k+1}}\xi_j^{2^{l+1}},a).
    \end{align}    
    The relations defining $D_1$ are
    \begin{align}\label{urels}
      U_{k,l} &= \begin{cases} 
        U_{l,k} + \Sq(\Delta_{k+1}+\Delta_{l+1}) & \text{\qquad($l<k$),} \\
        \mu_0\Sq(\Delta_{k+2}) + \Sq(2\Delta_{k+1}) & \text{\qquad($l=k$).}
      \end{cases}
    \end{align}
    $\partial$ is zero on $A\subset D_1$ and 
    otherwise given by $\partial \mu_0a = 2a$ and $\partial U_{k,l}a = Y_{k,l}a$.

    The following is our main result:
    \begin{thm}\label{mainthm}
      There is a weak equivalence $B_\bullet\rightarrow D_\bullet$
      of crossed algebras that is the identity on $\pi_0$ and $\pi_1$.
    \end{thm}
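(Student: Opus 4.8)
The plan is to exhibit the weak equivalence as a morphism of crossed algebras $\phi\colon B_\bullet\to D_\bullet$ and to reduce its existence to the comparison of a single cohomology invariant. First I would check that $D_\bullet$ really is a crossed algebra with the stated homotopy. This means verifying: that $D_0$ is an associative $\ZZ/4$-algebra admitting the surjection $\pi$ and the decomposition \eqref{d0decomp}; that the bimodule structure on $D_1$ prescribed by $a\mu_0=\mu_0a+\kappa(a)$ and \eqref{ucomm} is well defined and compatible with the relations \eqref{urels}; that $\partial$ is a $D_0$-bimodule homomorphism, for which one uses \eqref{yklcomm} together with \eqref{yklrel} to see that $\partial U_{k,l}=Y_{k,l}$ and $\partial\mu_0=2$ respect both \eqref{ucomm} and the defining relations; and that the Peiffer identity holds. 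I would then identify $\pi_0(D_\bullet)=\coker\partial$ with $A$ via $\pi$, using that $\im\partial=\ker\pi$ by \eqref{d0decomp}, and $\pi_1(D_\bullet)=\ker\partial$ with the copy of $A$ inside $D_1$, after checking that $\partial$ is injective modulo $A$.

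Granting that both $B_\bullet$ and $D_\bullet$ are crossed algebras with $\pi_0=A$ and $\pi_1=A$ under fixed identifications, I would invoke the classification of such objects from \cite{baues}: up to weak equivalence inducing the identity on homotopy, a crossed algebra with these invariants is determined by a single characteristic class in $HH^{3}(A,A)$ (with the appropriate internal degree shift, and with the $\ZZ/4$-structure recorded in the coefficients). The formal content I need is that a weak equivalence $B_\bullet\to D_\bullet$ restricting to $\id$ on $\pi_0$ and $\pi_1$ exists precisely when the two characteristic classes agree, the obstruction to lifting $\id_A$ to such a morphism being exactly their difference. This reduces Theorem \ref{mainthm} to the statement that $D_\bullet$ realizes Baues's class.

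To verify the class-matching I would construct as much of $\phi$ explicitly as possible. Recall from \cite{baues} that $B_0$ is free as a $\GG$-algebra on the Steenrod generators, so an algebra map $\phi_0\colon B_0\to D_0$ covering $\id_A$ is freely specified by lifting each $\Sq^i$ to a chosen element of $\pi^{-1}(\Sq^i)$; such $\phi_0$ automatically induces $\id$ on $\pi_0$. One then tries to extend $\phi_0$ to a $\phi_0$-equivariant bimodule map $\phi_1\colon B_1\to D_1$ with $\partial\phi_1=\phi_0\partial$, sending the secondary lift of each primary Adem relation to the corresponding combination in $D_1$. The rules \eqref{ucomm}, \eqref{yklcomm} and the relations \eqref{urels} are designed exactly so that the secondary relations they produce match Baues's, as one already sees from $\Sq^1\Sq^1=2\Sq^2+Y_{-1,0}$ and the further products listed in the introduction and in Figure \ref{ademtab}.

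The main obstacle is this consistency of $\phi_1$ on relations, since a priori infinitely many secondary Adem relations must be matched simultaneously. I expect to handle it by reducing the comparison of classes to a finite check: showing that the relevant summand of $HH^{3}(A,A)$ is detected on a small set of generators — for instance on the subalgebra generated by $\Sq^1$ and $\Sq^2$ — so that agreement of the finitely many low-degree products displayed above already forces the full classes to coincide. Proving such a detection statement, and organizing the $\ZZ/4$-bookkeeping of \eqref{ucomm} through \eqref{yklrel} so that it visibly reproduces Baues's multiplication, is where the real work lies. Once this is done, $\phi$ exists, is the identity on $\pi_0$ and $\pi_1$ by construction, and is therefore the required weak equivalence.
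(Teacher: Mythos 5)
Your reduction to a characteristic class comparison contains the genuine gap, and it sits exactly where you say ``the real work lies.'' The class of a crossed algebra with $\pi_0=\pi_1=A$ is bigraded: it has a component in every internal degree, one for each degree in which Adem relations live. Agreement of the finitely many low-degree products (the subalgebra generated by $\Sq^1$, $\Sq^2$, or any finite list as in Figure \ref{ademtab}) therefore cannot force the full classes to coincide unless you prove a detection theorem for the relevant cohomology group, and no such statement is known or sketched in your proposal --- it is a hope, not an argument. There is also an imprecision in the classification you invoke: since $B_0$ and $D_0$ are $\ZZ/4$-algebras while $A$ is an $\FF_2$-algebra, the classifying object is not ordinary $HH^3(A,A)$ over $\FF_2$ but a Shukla/Mac\,Lane-type cohomology relative to $\ZZ/4$, and Baues's Uniqueness Theorem \cite[15.3.13]{baues} --- the only uniqueness statement actually available --- is not a bare crossed-algebra classification at all: it characterizes $B_\bullet$ as a \emph{secondary Hopf algebra}, i.e., it requires the secondary diagonal together with the left action operator $L$ and the symmetry operator $S$ as input.

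This is why the paper does not (and cannot) compare $B_\bullet$ with $D_\bullet$ directly: $D_\bullet$ carries no secondary diagonal. The final Remark of the paper exhibits $[3,2]$ and $[2,2]\Sq^1$, which have the same image in $D_0$ but satisfy $S([3,2])=Q_1\otimes Q_0+Q_0\otimes Q_1$ while $S([2,2]\Sq^1)=0$, so $S$ does not factor through $R_E\rightarrow R_D$ and any attempt to run a uniqueness argument over $D_\bullet$ itself breaks down. The paper's actual route is: enlarge $D_\bullet$ to an intermediate crossed algebra $E_\bullet$ with a projection $E_\bullet\rightarrow D_\bullet$ (a weak equivalence); equip $E_\bullet$ with a Hopf structure on $E_0$ and a secondary diagonal $\Delta_1$ built from an explicit right-linear map $\nabla:R_E\rightarrow A\otimes A\oplus\mu_0 A\otimes A$; compute $L$ and $S$ in closed form (Lemmas \ref{Phidef} and \ref{elscomp}); pull back along the multiplicative map $\cmap_0:B_0\rightarrow E_0$ (using freeness of $B_0$, as in your construction of $\phi_0$); verify in all degrees at once --- via the bilinearity of $L$ and $S$ established in Lemma \ref{Phidef}, not via a finite check --- that these operators agree with Baues's on the Adem relations (Lemma \ref{lsbcomp}); and only then apply the Uniqueness Theorem to get $\cmap\os E_\bullet\cong B_\bullet$, whence $B_\bullet\cong\cmap\os E_\bullet\xrightarrow{\sim}E_\bullet\xrightarrow{\sim}D_\bullet$. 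To repair your proposal you would need either the missing detection theorem or, as in the paper, the extra comultiplicative structure that makes an all-degrees verification reducible to bilinearity plus evaluation on the generators $[n,m]$.
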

    Recall that a crossed algebra \cite[5.1.6]{baues} 
    is an exact sequence of the form $B_\bullet$ with $B_0$ an algebra,
    $B_1$ a $B_0$-bimodule and a bilinear differential $\partial:B_1\rightarrow B_0$ 
    with $(\partial b)b' = b(\partial b')$ for $b,b'\in B_1$. 
    The homotopy groups
    $\pi_0(B_\bullet) := \coker\partial$ and $\pi_1(B_\bullet) := \ker\partial$
    will mostly be $A$ in our examples.

    This theorem makes it easy to compute threefold Massey products
    in the Steenrod algebra. 
    Think of $D_\bullet$ as the splice of the two short exact sequences
    \begin{align*}
      \xymatrix{
        A \ar@{ >->}[r] & D_1 \ar@{ ->>}[r]_-{\partial} & R_D\ar@{->}@/_1em/[l]_-{u}, 
        &\quad
        R_D \ar@{ >->}[r] & D_0 \ar@{ ->>}[r] & A \ar@{->}@/_1em/[l]_-{\sigma}        
      }
    \end{align*}
    and pick sections $\sigma$ and $u$ as indicated.
    A simple choice, for example, would be
    $\sigma(\sum c_i\Sq(R_i))$ $=$ $\sum \widehat{c_i}\Sq(R_i)$
    with $\widehat{(-)}:\ZZ/2\rightarrow\ZZ/4$ 
    given by $\widehat{0}=0$ and $\widehat{1}=1$.
    For $u$ one can take the map
    \begin{align}\label{udef}
      2\Sq(R) \mapsto \mu_0\Sq(R),\qquad
      Y_{k,l}\Sq(R)  \mapsto U_{k,l}\Sq(R)\qquad\text{(for $k<l$)}
    \end{align}
    which is right-linear.
    For $a,b\in A$ one then has $\sigma(ab)=\sigma(a)\sigma(b)+\partial\tau(a,b)$
    with $\tau(a,b) = u\left(\sigma(ab)-\sigma(a)\sigma(b)\right)\in D_1$.
    Associativity of the multiplication in $A$ dictates that
    \begin{align*}
      \langle a,b,c\rangle &:=
      \tau(ab,c) -\tau(a,b)\sigma(c) - \tau(a,bc) + \sigma(a)\tau(b,c)
    \end{align*}
    is a $\partial$-cycle, hence in $A$. $\langle a,b,c\rangle$
    is the Massey product in question. 
    It is only defined up to an indeterminacy 
    coming from the choices of $\sigma$ and $u$.

    As an example, consider the case $a=b=c=\Sq(0,2)$.
    With $\sigma$ and $u$ chosen as above
    one has $\sigma(a)\sigma(b)=2\Sq(0,4)+Y_{0,2}\Sq(0,1)$,
    so $\tau(a,b) = \mu_0\Sq(0,4) + U_{0,2}\Sq(0,1)$. One finds
    \begin{align*}
      \langle a,b,c\rangle &= \Sq(0,2)\tau(b,c) - \tau(a,b)\Sq(0,2) \\
      &= \mu_0\underbrace{\left[\Sq(0,2),\Sq(0,4)\right]}_{=\Sq(0,1,0,1)}
      + U_{0,2}\underbrace{\left[\Sq(0,2),\Sq(0,1)\right]}_{=0} + U_{2,2}\Sq(0,1) \\
      &= \mu_0\Sq(0,1,0,1) + \left(\mu_0\Sq(0,0,0,1)+\Sq(0,0,2)\right)\Sq(0,1) \\
      &= \Sq(0,1,2)
    \end{align*}
    which recovers a result of Kristensen and Madsen \cite{krimad}.
    A straightforward computation, whose details we leave to the interested reader,
    now generalizes this to
    \begin{coro}
      Let $t\ge 1$. Then $\langle P_t^s, P_t^s, P_t^s \rangle$
      is zero for $s<t-1$ and
      $\langle P_t^{t-1}, P_t^{t-1}, P_t^{t-1} \rangle 
      \owns \Sq\left((2^{t-1}-1)\Delta_t + 2^t\Delta_{t+1}\right)$.
    \end{coro}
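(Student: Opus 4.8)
The plan is to run the Massey‑product recipe of the introduction on $a=b=c=P_t^s$, using only the displayed formula for $P_t^sP_t^s$, the bimodule rules $a\mu_0=\mu_0a+\kappa(a)$ and \myref{ucomm}, and the relations \myref{urels}. Both branches of the squaring formula lie in $\ker\pi$, so $P_t^sP_t^s=0$ in $A$; hence $\tau(P_t^sP_t^s,P_t^s)=\tau(P_t^s,P_t^sP_t^s)=0$ and the defining cocycle collapses to a commutator
\begin{align*}
  \langle P_t^s,P_t^s,P_t^s\rangle \;=\; \sigma(P_t^s)\,\tau(P_t^s,P_t^s)\;-\;\tau(P_t^s,P_t^s)\,\sigma(P_t^s).
\end{align*}
With the sections of the introduction one reads $\tau(P_t^s,P_t^s)=u(-\sigma(P_t^s)^2)$ straight off the squaring formula: it is $\mu_0P_t^{s+1}$ for $s<t-1$, and $\mu_0P_t^{t}+U_{t-2,2t-2}\,\Sq((2^{t-1}-1)\Delta_t)$ for $s=t-1$. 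So everything reduces to evaluating one commutator in $D_1$.

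For $s<t-1$ (so $t\ge2$) I expand $[\sigma(P_t^s),\mu_0P_t^{s+1}]=\mu_0[P_t^s,P_t^{s+1}]+\kappa(P_t^s)\,P_t^{s+1}$. Here $\kappa(P_t^s)=\cont(\xi_1,\xi_t^{2^s})=0$ because $\xi_1\nmid\xi_t^{2^s}$, and $[P_t^s,P_t^{s+1}]=0$ in $A$. The latter I would read off the coproduct: the two orders of $\Sq(a\Delta_t)\Sq(b\Delta_t)$ can differ only through the generator $\xi_{2t}$, whose coproduct contains the asymmetric monomial $\xi_t^{2^t}\otimes\xi_t$; on degree grounds such a term can appear only when $a+b\ge 2^t+1$, which fails for $a+b=3\cdot2^s$ with $s\le t-2$. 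Thus the commutator — and with it the Massey product — is zero.

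For $s=t-1$ the $\mu_0$-summand gives $\mu_0[P_t^{t-1},P_t^{t}]$ (again $\kappa(P_t^{t-1})=0$ for $t\ge2$). For the $U$-summand I push $\sigma(P_t^{t-1})=\Sq(2^{t-1}\Delta_t)$ through $U_{t-2,2t-2}$ by \myref{ucomm}. Since this element is dual to the single monomial $\xi_t^{2^{t-1}}$, the contraction $\cont(\xi_i^{2^{t-1}}\xi_j^{2^{2t-1}},\Sq(2^{t-1}\Delta_t))$ is nonzero only for $(i,j)=(0,0)$ and $(i,j)=(t,0)$; the first returns $U_{t-2,2t-2}$ and the second the diagonal $U_{2t-2,2t-2}$ (with contraction $1$). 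The off-diagonal piece collapses to $U_{t-2,2t-2}\,[\Sq(2^{t-1}\Delta_t),\Sq((2^{t-1}-1)\Delta_t)]$, whose inner commutator vanishes by the same $\xi_{2t}$-criterion ($a+b=2^t-1<2^t+1$). Hence
\begin{align*}
  \langle P_t^{t-1},P_t^{t-1},P_t^{t-1}\rangle \;=\; \mu_0[P_t^{t-1},P_t^{t}]\;+\;U_{2t-2,2t-2}\,\Sq((2^{t-1}-1)\Delta_t).
\end{align*}
Now \myref{urels} rewrites $U_{2t-2,2t-2}=\mu_0\Sq(\Delta_{2t})+\Sq(2\Delta_{2t-1})$, and the Milnor identity $[P_t^{t-1},P_t^{t}]=\Sq(\Delta_{2t})\,\Sq((2^{t-1}-1)\Delta_t)$ (the lone $\xi_{2t}$-term isolated above) makes the two $\mu_0$-contributions cancel. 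What remains is the genuine element $\Sq(2\Delta_{2t-1})\,\Sq((2^{t-1}-1)\Delta_t)$ of $A$, which is the Massey product in question; for $t=2$ it is $\Sq(0,0,2)\Sq(0,1)=\Sq(0,1,2)$, recovering the introduction.

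The main obstacle is the disciplined use of \myref{ucomm} — isolating which shifted indices $U_{k+i,l+j}$ actually survive — together with the two Milnor-product facts the argument rests on: the vanishing criterion for $[\Sq(a\Delta_t),\Sq(b\Delta_t)]$ and the evaluation of $[P_t^{t-1},P_t^{t}]$ needed for the $\mu_0$-cancellation. The simplification that keeps all of this finite is that every $A$-factor in play is dual to a single power of $\xi_t$, so the contractions are governed by plain divisibility and the coproduct asymmetries only by the single generator $\xi_{2t}$; granting these, the leftover steps are the routine Milnor computations.
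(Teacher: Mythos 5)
Your proposal follows exactly the route the paper intends: the paper gives no general argument for this corollary beyond the worked example $\langle\Sq(0,2),\Sq(0,2),\Sq(0,2)\rangle=\Sq(0,1,2)$ and the phrase ``a straightforward computation, whose details we leave to the interested reader,'' and your computation is precisely that example run with general parameters. For $t\ge 2$ your steps check out: since $P_t^sP_t^s=0$ in $A$, the bracket collapses to the commutator of $\sigma(P_t^s)$ with $\tau(P_t^s,P_t^s)$; your coproduct criterion is sound (only $\xi_t$ and $\xi_{2t}$ contribute pure $\xi_t$-monomial tensor terms, each $\xi_{2t}$ costing $\xi_t$-weight $2^t+1$, so $[\Sq(a\Delta_t),\Sq(b\Delta_t)]=0$ for $a+b\le 2^t$); the only surviving shifts in \myref{ucomm} are indeed $(i,j)=(0,0)$ and $(t,0)$; $[P_t^{t-1},P_t^t]=\Sq\bigl((2^{t-1}-1)\Delta_t+\Delta_{2t}\bigr)$ is correct; and the $\mu_0$-terms do cancel against $U_{2t-2,2t-2}=\mu_0\Sq(\Delta_{2t})+\Sq(2\Delta_{2t-1})$ from \myref{urels}.

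Two points need attention. First, your final element $\Sq\bigl((2^{t-1}-1)\Delta_t+2\Delta_{2t-1}\bigr)$ is not the printed $\Sq\bigl((2^{t-1}-1)\Delta_t+2^t\Delta_{t+1}\bigr)$, and you should say so explicitly rather than assert your expression ``is the Massey product in question.'' In fact you have silently corrected a typo: the bracket lives in degree $3\cdot 2^{t-1}(2^t-1)-1$, which the printed element misses for every $t$ (at $t=2$ it reads $\Sq(0,1,4)$, of degree $31\ne 17$, against the introduction's $\Sq(0,1,2)$), whereas your element has the correct degree and specializes correctly at $t=2$. Second, a genuine gap: the statement allows $t\ge 1$, but your argument invokes $\kappa(P_t^{t-1})=0$, which needs $t\ge 2$, and at $t=1$ the conclusion actually fails. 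There $\kappa(\Sq^1)=1$, so the $\mu_0$-summand contributes an extra term $\kappa(\Sq^1)\Sq^2=\Sq^2$, which cancels the $\Sq(2\Delta_1)=\Sq^2$ arising from $U_{0,0}=\mu_0\Sq(0,1)+\Sq^2$; with the paper's $\sigma$ and $u$ one computes $\langle\Sq^1,\Sq^1,\Sq^1\rangle=0$, and since the indeterminacy $\Sq^1A_1+A_1\Sq^1$ vanishes in that degree, no alternative choices rescue the membership. So you should either restrict the second assertion to $t\ge 2$ or flag $t=1$ as an exception to the statement as printed.
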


    The plan of the paper is as follows.
    In the first section we will review the definition and structure 
    of $D_\bullet$ and sketch proofs for the claims in this introduction.
    In section \ref{edefsect} we will construct
    an intermediate sequence $E_\bullet$ with a weak equivalence
    $E_\bullet\rightarrow D_\bullet$.
    We then construct a comparison map $B_\bullet\rightarrow E_\bullet$
    in section \ref{compsect}, thereby proving the main Theorem. 
    Finally, the appendix sketches the relation of 
    the odd-primary secondary Steenrod algebra 
    with the algebra of $\BP$ operations.

    Before we proceed, however, I want to thank Mamuka Jibladze
    for many stimulating emails on the subject. The first such email
    arrived in May 2004 and this is when my interest in the secondary
    Steenrod algebra began. Without his guidance it would have been
    a lot more difficult to wrap my head around Baues's wonderful
    construction. 
    I also thank Hans-Joachim Baues for very constructive comments 
    on an earlier draft of this paper.
  \end{section}

  \begin{section}{The construction of \texorpdfstring{$D_\bullet$}{D}}
    \begin{subsection}{Definition}
      As in the introduction, we let
    \begin{align*}
      {D_0}\us &= \ZZ/4[\xi_k,2\xi_{k,l}\,\vert\,0\le k<l,\, \xi_0=1].
    \end{align*}
    This is turned into a Hopf algebra with coproduct
    \begin{align*}
      \Delta\left(\xi_n\right) &= 
      \sum_{i+j=n} \xi_i^{2^j}\otimes\xi_j 
      + 2 \sum_{0\le k<l}
      \xi_{n-1-k}^{2^k}\xi_{n-1-l}^{2^l}\otimes\xi_{k,l} \\
      \Delta\left(\xi_{n,m}\right) &= \xi_{n,m}\otimes 1 +
      \sum_{k\ge 0} \xi_{n-k}^{2^k}\xi_{m-k}^{2^k}\otimes\xi_{k+1}\\
      &\qquad+ \sum_{0\le k<l}\!\!
      \left(\xi_{n-k}^{2^k}\xi_{m-l}^{2^l}
      +\xi_{m-k}^{2^k}\xi_{n-l}^{2^l} \right)\!\otimes\xi_{k,l}. 
    \end{align*}
    We list some basic properties of its dual in the following
    \begin{lemma}\label{d0props}
      Let $D_0=\Hom({D_0}\us,\ZZ/4)$ be the dual algebra and
      let $\Sq(R)$, $Y_{k,l}(R)$ $\in D_0$ be defined by
      \begin{align*}
        \langle \Sq(R), \xi^S\rangle &= \delta_{R,S}, &
        \langle \Sq(R), 2\xi_{m,n}\xi^S\rangle &=0,\\
        \langle Y_{k,l}(R), \xi^S\rangle &= 0, &
        \langle Y_{k,l}(R), 2\xi_{m,n}\xi^S\rangle 
        &= 2\delta_{k+1,m}\delta_{l+1,n}\delta_{R,S}.
      \end{align*}
      Write $Y_{k,l}$ for $Y_{k,l}(0)$. The following is true:
      \begin{enumerate}
      \item
        There is a multiplicative map $\pi:D_0\rightarrow A$ 
        with $\Sq(R)\mapsto\Sq(R)$. 
      \item 
        One has $Y_{k,l}(R) = Y_{k,l}\Sq(R)$.
      \item
        %This is dual to the inclusion $A\us\cong 2{D_0}\us\hookrightarrow {D_0}\us$.
        The kernel $R_D=\ker\pi$ is $2D_0+\sum_{-1\le k<l} Y_{k,l}A$
        and satisfies $R_D^2=0$.
      \item
        The commutation rule \myref{yklcomm} holds
        with $Y_{k,l}$ as in \myref{yklrel} for $k\ge l$.
      \end{enumerate}
    \end{lemma}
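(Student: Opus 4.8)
The plan is to prove all four assertions by computing in the dual, using that the product on $D_0=\Hom({D_0}\us,\ZZ/4)$ is the convolution product dual to the coproduct of ${D_0}\us$. The one observation that drives everything is that the ``correction terms'' separating ${D_0}\us$ from the Milnor dual $A\us=\FF_2[\xi_n]$ all carry an explicit factor of $2$: writing $\Delta\xi_n=M_n+2C_n$ with $M_n=\sum_{i+j=n}\xi_i^{2^j}\otimes\xi_j$ the Milnor coproduct and $C_n=\sum_{a<b}\xi_{n-1-a}^{2^a}\xi_{n-1-b}^{2^b}\otimes\xi_{a,b}$, every non-Milnor contribution lands in a summand whose right tensor factor is a torsion generator $2\xi_{a,b}$. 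Since any product $2\xi_{a,b}\cdot 2\xi_{c,d}=4\xi_{a,b}\xi_{c,d}$ vanishes in $\ZZ/4$, one even has $(2C_n)^2=0$.

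For (1) I would define $\pi$ by $\langle\pi(d),\xi^S\rangle=\langle d,\xi^S\rangle\bmod 2$, so that $\pi(\Sq(R))=\Sq(R)$ and $\pi(Y_{k,l}(R))=0$. Multiplicativity is then immediate: because $(2C_n)^2=0$, one gets $\Delta\xi^R\equiv\prod_n M_n^{r_n}\pmod 2$ after pairing, and $\prod_n M_n^{r_n}$ is exactly the Milnor coproduct of $\xi^R$; hence $\langle\pi(dd'),\xi^R\rangle=\langle\pi(d)\otimes\pi(d'),\Delta_{A\us}\xi^R\rangle$. For (2) I would evaluate $Y_{k,l}\Sq(R)=Y_{k,l}(0)\cdot\Sq(R)$ on the two kinds of basis element. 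The point is that $\xi_{k+1,l+1}$ occurs in the \emph{left} tensor factor of a coproduct only through the primitive term $\xi_{n,m}\otimes 1$ of $\Delta\xi_{n,m}$; consequently $\langle Y_{k,l}\Sq(R),\xi^S\rangle=0$, while on $2\xi_{m,n}\xi^S$ only the summand coming from $(\xi_{k+1,l+1}\otimes 1)\cdot(1\otimes\xi^S)$ survives and reproduces $2\delta_{k+1,m}\delta_{l+1,n}\delta_{R,S}$, matching the defining pairings of $Y_{k,l}(R)$.

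Part (3) is then essentially formal. From the decomposition \myref{d0decomp} and part (2), $\ker\pi$ is spanned by the $2\Sq(R)$ and the $Y_{k,l}(R)=Y_{k,l}\Sq(R)$, which is precisely $2D_0+\sum_{-1\le k<l}Y_{k,l}A$ (the index identification $m=k+1,\,n=l+1$ matching $0\le m<n$ with $-1\le k<l$). The cleanest way to see $R_D^2=0$ is to note that $r\in R_D$ iff $\im(r)\subseteq 2\ZZ/4$: on polynomial basis elements this is $\pi(r)=0$, and on the $2$-torsion basis elements it holds automatically by $\ZZ/4$-linearity, since $2\langle r,2\xi_{m,n}\xi^S\rangle=\langle r,0\rangle=0$. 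Then for $r,r'\in R_D$ every term of $\langle rr',x\rangle=\sum\langle r,x'\rangle\langle r',x''\rangle$ lies in $2\ZZ/4\cdot 2\ZZ/4=0$, whence $rr'=0$.

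The commutation rule (4) is the real work and the main obstacle. I would compute both pairings $\langle aY_{k,l},2\xi_{m,n}\xi^T\rangle$ and $\langle aY_{k,l},\xi^T\rangle$, using that $Y_{k,l}=Y_{k,l}(0)$ selects the part of the coproduct whose right tensor factor is $2\xi_{k+1,l+1}$, and compare with the pairings of the right-hand side of \myref{yklcomm}. On a torsion test element $2\xi_{m,n}\xi^T$ only the off-diagonal $Y_{k+i,l+j}(T)$ can contribute, and the two summands of the last sum $\sum_{a<b}(\xi_{n-a}^{2^a}\xi_{m-b}^{2^b}+\xi_{m-a}^{2^a}\xi_{n-b}^{2^b})\otimes\xi_{a,b}$ of $\Delta\xi_{n,m}$ produce exactly the index pairs $(i,j)=(m-k-1,n-l-1)$ and $(n-k-1,m-l-1)$; the second of these is where the symmetry convention $Y_{k,l}=Y_{l,k}$ of \myref{yklrel} is forced. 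The contraction appears because $\langle a,\xi_i^{2^{k+1}}\xi_j^{2^{l+1}}\xi^T\rangle=\langle\cont(\xi_i^{2^{k+1}}\xi_j^{2^{l+1}},a),\xi^T\rangle$ by definition. On a polynomial test element $\xi^T$, by contrast, the off-diagonal $Y$'s vanish and only the diagonal terms $Y_{k+i,l+j}$ with $k+i=l+j$ survive on the right; here the convention $Y_{k,k}=2\Sq(\Delta_{k+2})$ must be matched against the $\otimes 2\xi_{k+1,l+1}$ part of the correction terms $2C_t$ inside $\Delta\xi^T$. I expect the bulk of the effort to lie in this last matching and in the index bookkeeping that consistently reinterprets the shifted $Y_{k+i,l+j}$ through \myref{yklrel}.
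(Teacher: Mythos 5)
Your proposal is correct, and it follows exactly the route the paper intends: the paper's proof of Lemma \ref{d0props} is literally ``The verification is straightforward,'' and your dual-pairing computation --- exploiting that all non-Milnor terms of the coproduct carry a factor of $2$, that $\xi_{k,l}$ appears on the left of $\Delta\xi_{n,m}$ only in the primitive term, that $R_D$ consists of functionals with values in $2\ZZ/4$, and the index matching $(i,j)=(m{-}k{-}1,n{-}l{-}1)$, $(n{-}k{-}1,m{-}l{-}1)$ forcing \myref{yklrel} --- is precisely that verification carried out; I checked the diagonal case of (4) as well, where the term $2\xi_{t-k-2}^{2^{k+1}}\xi_{t-l-2}^{2^{l+1}}\otimes\xi_{k+1,l+1}$ in $\Delta\xi_t$ matches $Y_{c,c}=2\Sq(\Delta_{c+2})$ under $t=c+2$, so your outline closes.
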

    \begin{proof}
      The verification is straightforward.
    \end{proof}

    We will encounter the following $A$-bimodules more than once.
    \begin{lemma}
      There are $A$-bimodules $U$, $V$ with
      \begin{align*}
        V &= \sum_{-1\le k} V_kA, \quad
        U = \sum_{-1\le k,l} U_{k,l}A 
        \intertext{and relations}
        aV_k &= \sum_{i\ge 0}V_{k+i}\cont(\xi_i^{2^{k+1}},a), \quad
        aU_{k,l} = \sum_{i,j\ge 0}U_{k+i,l+j}\cont(\xi_i^{2^{k+1}}\xi_j^{2^{l+1}},a). 
        \intertext{Furthermore, let $R_{k,l}=U_{k,l}+U_{l,k}$
          and $R_{k,k}=U_{k,k}$  for $-1\le k<l$ and}
        K&=\sum_{-1\le k<l} R_{k,l}A + \sum_{-1\le k}R_{k,k}A.
      \end{align*}
      Then 
      \begin{align}\label{rcomrel1}
        aR_{k,l} &= \sum_{-1\le n<m} R_{n,m}
        \cont(\xi_{n-k}^{2^{k+1}}\xi_{m-l}^{2^{l+1}}+\xi_{m-k}^{2^{k+1}}\xi_{n-l}^{2^{l+1}},a), \\
        \label{rcomrel2}
        aR_{k,k} &= \sum_{0\le i} R_{k+i,k+i}\cont(\xi_i^{2^{k+2}},a) 
        + \sum_{0\le i<j} R_{k+i,l+j}\cont(\xi_i^{2^{k+1}}\xi_j^{2^{l+1}},a)
      \end{align}
      and $K$ is a bimodule, too.
      All of $U$, $V$ and $K$ are free $A$-modules from both
      left and right with basis the $U_{k,l}$, $V_k$, resp.\ $R_{k,l}$ and $R_{k,k}$.
      The same is true for the sub-bimodules 
      \begin{align*}
        V'&=\sum_{0\le k} V_kA,
        & U'&=\,\,\sum_{\makebox[0\width]{\scriptsize $-1\le k,\,0\le l$}}\,\, U_{k,l}A,
        & K'&=\sum_{\makebox[0\width]{\scriptsize $0\le k<l$}} R_{k,l}A + \sum_{0\le k}R_{k,k}A
      \end{align*}
      where the generators 
      $V_{-1}$, $U_{\ast,-1}$ and $R_{-1,\ast}$ have been left out.
    \end{lemma}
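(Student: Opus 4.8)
The plan is to take right-freeness as given by construction and to concentrate on four points: that the stated left actions are well defined (i.e.\ associative), that they make $U,V$ into bimodules, that the relations \myref{rcomrel1}--\myref{rcomrel2} hold so that $K$ is closed under the left action, and finally that all the modules in sight are free from the left as well. I expect the last point to be where the real work lies.

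First I would check that $a\mapsto aV_k$ extends to an associative left action. Extending by $a\cdot(V_kb):=(aV_k)b$ makes right-linearity automatic, so the only axiom to verify is $(ab)V_k=a(bV_k)$. Expanding both sides with the defining formula, this reduces to the single identity
\[
  \cont(\xi_m^{2^{k+1}},ab)=\sum_{i+j=m}\cont(\xi_i^{2^{j+k+1}},a)\,\cont(\xi_j^{2^{k+1}},b).
\]
This in turn is pure Hopf algebra: since $\Delta$ is multiplicative and commutes with the Frobenius one has $\Delta(\xi_m^{2^{k+1}})=\sum_{i+j=m}\xi_i^{2^{j+k+1}}\otimes\xi_j^{2^{k+1}}$, and pairing this against $a\otimes b$ together with $\Delta q$ and using $\langle\cont(p,x),q\rangle=\langle x,pq\rangle$ gives the claim. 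The analogous identity for a product $\xi_m^{2^{k+1}}\xi_n^{2^{l+1}}$ follows in the same way from $\Delta(\xi_m^{2^{k+1}}\xi_n^{2^{l+1}})=\Delta(\xi_m^{2^{k+1}})\Delta(\xi_n^{2^{l+1}})$ and establishes that $U$ is a bimodule too.

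Next I would derive \myref{rcomrel1}--\myref{rcomrel2} by substituting $R_{k,l}=U_{k,l}+U_{l,k}$ and collecting the coefficient of each $U_{n,m}$. The key point is that the coefficient of $U_{n,m}$ in $aR_{k,l}$ is symmetric under $n\leftrightarrow m$, so the $U_{n,m}$ and $U_{m,n}$ terms assemble into $R_{n,m}$, while the would-be diagonal contribution to $R_{k,l}$ itself occurs twice and hence vanishes since $A$ is an $\FF_2$-algebra; this is exactly why the sum in \myref{rcomrel1} runs over $n<m$ only. The corresponding bookkeeping for $aR_{k,k}=aU_{k,k}$ produces the diagonal term $\cont(\xi_i^{2^{k+2}},a)$ from $(\xi_i^{2^{k+1}})^2=\xi_i^{2^{k+2}}$. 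Because every term on the right of \myref{rcomrel1}--\myref{rcomrel2} is again some $R_{n,m}$, $K$ is closed under the left action and hence a sub-bimodule. The primed versions are handled by the same formulas together with the observation that the commutation relations never lower an index: on a generator $U_{k,l}$ (resp.\ $V_k$, $R_{k,l}$) the right-hand sides involve only $U_{k+i,l+j}$ with $k+i\ge k$ and $l+j\ge l$, so deleting the generators carrying a $-1$ index leaves a set closed under the action.

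The main obstacle is left-freeness. Here I would exploit the unitriangular shape of the action: since $\xi_0=1$ gives $\cont(1,a)=a$, the defining formula reads
\[
  aU_{k,l}=U_{k,l}a+\sum_{(i,j)\neq(0,0)}U_{k+i,l+j}\cont(\xi_i^{2^{k+1}}\xi_j^{2^{l+1}},a),
\]
where every correction term strictly raises the generator index and strictly lowers the internal degree of its $A$-coefficient. Using that $U$ is free from the right, an element of internal degree $d$ has unique right coordinates $(b_{n,m})$, and the relation between the left coordinates $(a_{k,l})$ and the right coordinates is the degree-preserving linear map $b_{n,m}=\sum_{k\le n,\,l\le m}\cont(\xi_{n-k}^{2^{k+1}}\xi_{m-l}^{2^{l+1}},a_{k,l})$. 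On each fixed internal degree only finitely many generators contribute, since the degrees $|U_{k,l}|$ grow without bound, and with respect to the partial order on indices this map is unitriangular with identity diagonal, hence invertible; solving the triangular system by induction on the index recovers the $a_{k,l}$ uniquely from the $b_{n,m}$. This shows $U$ is free from the left on $\{U_{k,l}\}$, and the identical argument, now driven by \myref{rcomrel1}--\myref{rcomrel2} (whose diagonal coefficient is again $\cont(1,a)=a$, once one notes the vanishing of the second term for $k<l$), gives left-freeness of $V$ and $K$. Restricting the induction to the sub-poset of indices surviving after the $-1$ generators are deleted yields the statements for $V'$, $U'$ and $K'$.
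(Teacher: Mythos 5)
Your proof is correct, and since the paper dismisses this lemma with ``This is also straightforward,'' you have simply supplied the intended verification: the coproduct identity $\cont(p,ab)=\sum\cont(p',a)\cont(p'',b)$ applied to $\Delta(\xi_m^{2^{k+1}})=\sum_{i+j=m}\xi_i^{2^{j+k+1}}\otimes\xi_j^{2^{k+1}}$ for associativity, the characteristic-$2$ cancellation of the diagonal terms $U_{n,n}$ for \myref{rcomrel1}, and the unitriangular change of basis (diagonal $\cont(1,a)=a$, indices bounded below, finitely many generators per degree) for left-freeness. The only point worth flagging is cosmetic: your reading of \myref{rcomrel2} silently corrects the paper's typo there (the stray $l$ should be $k$), and your phrase ``diagonal contribution to $R_{k,l}$'' should say ``coefficient of $U_{n,n}$'' --- neither affects the argument.
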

    \begin{proof}
      This is also straightforward.
    \end{proof}

    We will need the following computation in $A$.
    \begin{lemma}\label{qpcommrels}
      Let $a\in A$ and $k\ge 0$, $l\ge 1$. Then
      \begin{align}
        \label{qcomm}
        a Q_k &= \sum_{i\ge 0}Q_{k+i}\cont\left(\xi_i^{2^{k+1}},a\right), \\
        \label{pcomm}
        a P_l^1 &= \sum_{i\ge 0} P_{l+i}^1\cont\left(\xi_i^{2^{l+1}},a\right)
        + \kappa(a) Q_{l+1}  \\
        \nonumber
        &\qquad\qquad
        + \sum_{l\le i<j} Q_{i}Q_{j}\cont\left(\xi_{l-i}^{2^{l}}\xi_{l-j}^{2^{l}},a\right).
      \end{align}
    \end{lemma}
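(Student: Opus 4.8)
The plan is to prove both relations by pairing with the Milnor basis $\xi^S\in{A}\us$ and moving everything to ${A}\us$ through the two adjunctions at hand: the duality $\langle ab,\zeta\rangle=\langle a\otimes b,\Delta\zeta\rangle$ between the product of $A$ and the coproduct $\Delta$ of ${A}\us$, and the defining adjunction $\langle\cont(p,a),q\rangle=\langle a,pq\rangle$ of the contraction. Under these, right multiplication by a basis element and left contraction become explicit operators on ${A}\us$. Since $Q_m$ is dual to $\xi_{m+1}$ and $P_m^1$ is dual to $\xi_m^2$, the map $a\mapsto aQ_m$ is adjoint to the operator $D_m$ reading off the coefficient of $\xi_{m+1}$ in the right tensor factor of $\Delta$ (and retaining the left factor), $a\mapsto aP_m^1$ is adjoint to the order-two operator $C_m$ reading off the coefficient of the square $\xi_m^2$, and $\cont(p,-)$ is adjoint to multiplication $M_p$ by $p$.

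For \eqref{qcomm} this makes the verification immediate. From the single term $\xi_{n-k-1}^{2^{k+1}}\otimes\xi_{k+1}$ of $\Delta\xi_n$ one reads off that $D_k$ is the derivation of ${A}\us$ with $D_k(\xi_n)=\xi_{n-k-1}^{2^{k+1}}$, while the right-hand side of \eqref{qcomm} is adjoint to $\sum_i M_{\xi_i^{2^{k+1}}}\circ\partial/\partial\xi_{k+i+1}$, where $\partial/\partial\xi_{k+i+1}$ is the formal partial derivative (the derivation reading off $\xi_{k+i+1}$ from the left tensor factor). Both operators are derivations of the polynomial algebra ${A}\us$, so I only need to compare them on the generators $\xi_n$, where each returns $\xi_{n-k-1}^{2^{k+1}}$; this settles \eqref{qcomm}.

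For \eqref{pcomm} the target $\xi_l^2$ is a square, so $C_l$ is genuinely of order two. As every right-hand coproduct factor is a single generator $\xi_j$, the square $\xi_l^2$ can only arise by choosing the term $\xi_{n-l}^{2^l}\otimes\xi_l$ of $\Delta\xi_n$ in exactly two coproduct slots. I would split $C_l$ according to whether these two slots lie over the same polynomial generator or over two distinct ones: the diagonal part produces the squared left factor $\xi_{n-l}^{2^{l+1}}$ with a coefficient $\binom{t_n}{2}\bmod2$ and reorganizes into $\sum_i P_{l+i}^1\cont(\xi_i^{2^{l+1}},a)$, while the off-diagonal part produces $\xi_{n-l}^{2^l}\xi_{m-l}^{2^l}$ and reorganizes into $\sum_{i<j}Q_iQ_j\cont(\cdots,a)$. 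Because $C_l$ annihilates every generator $\xi_n$ (a single $\Delta\xi_n$ cannot supply two right factors), the spurious first-order part of the $P^1$-sum must be removed, and $\kappa(a)Q_{l+1}$ is exactly this lower-order correction: one checks that the $P^1$-sum and $\kappa(a)Q_{l+1}$ cancel on each $\xi_n$. The identity is then pinned down by evaluating both sides on the two-fold products $\xi_n\xi_m$, which together with the generators determine an operator of order $\le2$.

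I expect the order-two bookkeeping in \eqref{pcomm} to be the main obstacle: keeping the mod-$2$ binomial coefficients straight, checking that the diagonal contributions really assemble into the $P^1$-sum with the correctly squared contraction $\xi_i^{2^{l+1}}$, and isolating the first-order discrepancy absorbed by $\kappa(a)Q_{l+1}$. Once this combinatorics is arranged, the comparison on generators and on the products $\xi_n\xi_m$ is as routine as the one-line check for \eqref{qcomm}.
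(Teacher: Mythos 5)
Your proposal is correct and takes essentially the same approach as the paper: you dualize both identities to operators on the dual Steenrod algebra and compare derivations on the generators $\xi_n$, which is exactly how the paper proves \eqref{qcomm}. For \eqref{pcomm} the paper only remarks that the claim ``can be proved similarly, but with messier details'' and leaves it to the reader; your order-two bookkeeping (noting that an order $\le 2$ operator killing $1$ is determined by its values on the $\xi_n$ and on the products $\xi_n\xi_m$, with the $\kappa(a)Q$-term absorbing the first-order discrepancy on generators) is a sound execution of precisely those omitted details.
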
 
    \begin{proof}
      Recall that $A\us$ is canonically an $A$-bimodule with
      $$\Delta(p) = \sum_R \Sq(R)p\otimes \xi^R = \sum_R \xi^R\otimes p\Sq(R).$$
      One has $\langle a\Sq(R),p\rangle = \langle a,\Sq(R)p\rangle$
      and $\langle \Sq(R)a,p\rangle = \langle a,p\Sq(R)\rangle$.
      Upon dualization \myref{qcomm} therefore becomes the identity
      $$Q_k p = \sum_{i\ge 0} (pQ_{k+i})\cdot \xi_i^{2^{k+1}}.$$
      Here both sides are derivations in $p$, so it only remains
      to check equality on the $\xi_n$ which is easily done.

      The second claim can be proved similarly, but with messier details.
      We leave this to the skeptical reader.
    \end{proof}

    The following Lemma is the key to the definition of $D_1$.
    Recall that $A+\mu_0A$ carries the bimodule structure 
    $a\mu_0=\mu_0a + \kappa(a)$.
    \begin{lemma}\label{lambdadef}
      There is a bilinear map $\lambda:K'\rightarrow A+\mu_0A$ with
      \begin{align*}
        R_{k,l} &\mapsto \Sq(\Delta_{k+1}+\Delta_{l+1}), \\
        R_{k,k} &\mapsto \Sq(2\Delta_{k+1}) + \mu_0\Sq(\Delta_{k+2}).
      \end{align*}
    \end{lemma}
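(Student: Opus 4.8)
The plan is to exploit that $K'$ is free as a right $A$-module. By the preceding Lemma it has right $A$-basis the $R_{k,l}$ ($0\le k<l$) and $R_{k,k}$ ($0\le k$), so there is a unique right $A$-linear map $\lambda$ realizing the prescribed values; the whole content of the Lemma is then that this $\lambda$ is also left $A$-linear. Because $K'$ is a sub-bimodule — the summands carrying a $-1$ index in \myref{rcomrel1} drop out once $k,l\ge0$, their contraction coefficients containing a factor $\xi_{-1-k}$ or $\xi_{-1-l}$, both zero, while \myref{rcomrel2} only produces indices $\ge k$ — I only have to verify $\lambda(a\,g)=a\,\lambda(g)$ for $a\in A$ and $g$ a basis generator. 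Expanding the left-hand side by \myref{rcomrel1}/\myref{rcomrel2} and pulling $\lambda$ through by right-linearity turns each such equation into a closed identity in $A+\mu_0A$.

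First I would dispatch the off-diagonal generators $g=R_{k,l}$, $k<l$. Here $\lambda(g)=\Sq(\Delta_{k+1}+\Delta_{l+1})$ lies in the $A$-summand, and in $A$ one has $\Sq(\Delta_{n+1}+\Delta_{m+1})=Q_nQ_m$ throughout (the $Y$-correction of Lemma \ref{d0props} lying in $\ker\pi$). Thus $\lambda(a\,R_{k,l})=a\,\lambda(R_{k,l})$ becomes nothing but \myref{rcomrel1} read inside $A$, i.e.\ the commutation rule for the product $Q_kQ_l$. As in the proof of Lemma \ref{qpcommrels} this follows by dualization from the single-factor relation \myref{qcomm}; indeed \myref{rcomrel1} was engineered to encode precisely this commutation, so the off-diagonal case is essentially automatic.

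The diagonal generators $g=R_{k,k}$ carry the real content. Write $\lambda(R_{k,k})=\Sq(2\Delta_{k+1})+\mu_0\,Q_{k+1}=P_{k+1}^1+\mu_0Q_{k+1}$. Using $a\mu_0=\mu_0a+\kappa(a)$ the target splits as
\begin{align*}
 a\,\lambda(R_{k,k})
 &= \bigl(a\,P_{k+1}^1+\kappa(a)\,Q_{k+1}\bigr)\;+\;\mu_0\,\bigl(a\,Q_{k+1}\bigr),
\end{align*}
while $\lambda(a\,R_{k,k})$, computed from \myref{rcomrel2} and the values of $\lambda$, has $\mu_0$-part $\mu_0\sum_i Q_{k+1+i}\cont(\xi_i^{2^{k+2}},a)$ and an $A$-part assembled from the resulting $P^1_\bullet$- and $Q_\bullet Q_\bullet$-terms. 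The $\mu_0$-parts agree at once by \myref{qcomm} applied to $Q_{k+1}=\Sq(\Delta_{k+2})$. Matching the $A$-parts is exactly where \myref{pcomm} enters: $a\,P_{k+1}^1$ is expanded by \myref{pcomm} with $l=k+1$, and the extra summand $\kappa(a)\,Q_{k+1}$ forced by the $\mu_0$-twist is indispensable for reconciling the $\kappa(a)Q_\bullet$- and quadratic $Q_\bullet Q_\bullet$-contributions on the two sides.

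This last reconciliation is the step I expect to be the main obstacle. One must check, after reindexing, that the $\kappa(a)$-term produced by the bimodule twist, together with the $\kappa(a)Q_\bullet$ term of \myref{pcomm}, combines with the quadratic terms of \myref{rcomrel2} and \myref{pcomm} into a single identity in $A$. The verification is a bookkeeping computation with no conceptual difficulty, but it is precisely what the twist $a\mu_0=\mu_0a+\kappa(a)$ is for: any other normalization would leave an unmatched $\kappa(a)Q_\bullet$, so in effect this Lemma is what pins down the bimodule structure on $A+\mu_0A$ used to build $D_1$.
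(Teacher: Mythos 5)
Your proposal is correct and takes essentially the same route as the paper: both define $\lambda$ right-linearly on the free right $A$-basis of $K'$ and reduce left-linearity to compatibility with \myref{rcomrel1} and \myref{rcomrel2}, handling the off-diagonal case via the $Q_kQ_l$-commutation from \myref{qcomm} (together with $Q_kQ_l=Q_lQ_k$, $Q_k^2=0$) and the diagonal case by splitting $a\lambda(R_{k,k})=aP_{k+1}^1+\kappa(a)Q_{k+1}+\mu_0\,aQ_{k+1}$ and invoking variants of \myref{qcomm} and \myref{pcomm}. The paper's proof performs exactly this reduction and, like you, leaves the final $\kappa(a)Q_\bullet$/quadratic-term bookkeeping as a routine verification.
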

    \begin{proof}
      We need to show that 
      %the $\lambda(R_{k,l})$ and $\lambda(R_{k,k})$ satisfy the same commutation
      $\lambda$ respects the
      relations \myref{rcomrel1} and  \myref{rcomrel2}.

      By \myref{qcomm} one has
      \begin{align*}
        aQ_kQ_l &= \sum_{i,j\ge 0} Q_{k+i}Q_{l+j}\cont(\xi_i^{2^{k+1}}\xi_j^{2^{l+1}},a).
      \end{align*}
      Using $Q_kQ_l=Q_lQ_k$ and $Q_k^2=0$ this immediately implies 
      compatibility with \myref{rcomrel1}.

      For \myref{rcomrel2} note 
      $a\lambda(R_{k,k}) = aP_{k+1}^1 + \kappa(a)Q_{k+1} + \mu_0 aQ_{k+1}$.
      The claim is therefore equivalent to
      \begin{align*}
        aP_{k+1}^1 + \kappa(a)Q_{k+1} &= 
        \sum_{0\le i} P_{k+i+1}^1\!\!\cont(\xi_i^{2^{k+2}},a) 
        + \!\sum_{0\le i<j} Q_{k+i}Q_{l+j}\!\!\cont(\xi_i^{2^{k+1}}\xi_j^{2^{l+1}},a), \\
        aQ_{k+1} &= \sum_{0\le i} Q_{k+i+1}\cont(\xi_i^{2^{k+2}},a). 
      \end{align*}
      These are again just variants of \myref{qcomm} and \myref{pcomm}.
    \end{proof}

    Now let $D_1 = (A+\mu_0A + U')/\funcgraph(\lambda)$.
    This is easily seen to agree
    with the definition in the introduction.
    \begin{lemma}
      Let  $\partial U_{k,l} = Y_{k,l}$ and $\partial\mu_0=2$.
      This defines an exact sequence   
      \begin{align*}
        \xymatrix@R=-0.5em{
          A \ar@{ >->}[r] & D_1 
          \ar[r]^-{\partial} & D_0 \ar@{ ->>}[r]^-{\pi} & A.
        }
      \end{align*} 
    \end{lemma}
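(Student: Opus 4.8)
The plan is to treat the three maps one at a time, the real work being a normal form for $D_1$ as a right $A$-module, after which exactness becomes bookkeeping against the splittings of $D_0$ and $D_1$.

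First I would check that $\partial$ descends to the quotient $D_1=(A+\mu_0A+U')/\funcgraph(\lambda)$. Since $\partial$ is prescribed right-$A$-linearly on the generators $1,\mu_0$ and the $U_{k,l}$, and since $\funcgraph(\lambda)$ is generated as a right $A$-module by the elements $R_{k,l}-\lambda(R_{k,l})$ and $R_{k,k}-\lambda(R_{k,k})$, it suffices to verify $\partial R_{k,l}=\partial\lambda(R_{k,l})$ and $\partial R_{k,k}=\partial\lambda(R_{k,k})$ on generators. For $0\le k<l$ one has $\partial R_{k,l}=Y_{k,l}+Y_{l,k}=2Y_{k,l}=0$ by \myref{yklrel}, while $\lambda(R_{k,l})\in A$ gives $\partial\lambda(R_{k,l})=0$; on the diagonal $\partial R_{k,k}=Y_{k,k}=2\Sq(\Delta_{k+2})$ by \myref{yklrel}, matching $\partial(\Sq(2\Delta_{k+1})+\mu_0\Sq(\Delta_{k+2}))=2\Sq(\Delta_{k+2})$. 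Here I use $2Y_{k,l}=0$, which is immediate from the defining pairing in Lemma \ref{d0props}.

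The heart of the argument — and the step I expect to cost the most care — is to identify $D_1$ explicitly as
\[
D_1\;\cong\;A\ \oplus\ \mu_0A\ \oplus\ \bigoplus_{-1\le k<l}U_{k,l}A
\]
of right $A$-modules. To obtain this I would first exhibit a splitting $U'=K'\oplus C$, where $C=\bigoplus_{-1\le k<l}U_{k,l}A$ is spanned by the generators $U_{-1,l}$ $(l\ge0)$ and $U_{k,l}$ $(0\le k<l)$. This is a change of basis in the free module $U'$: replacing, for each $0\le k<l$, the free generator $U_{l,k}$ by $R_{k,l}=U_{k,l}+U_{l,k}$ and keeping $U_{k,k}=R_{k,k}$ turns the standard basis of $U'$ into the disjoint union of a basis of $C$ with the basis $\{R_{k,l},R_{k,k}\}$ of $K'$. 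Consequently $A+\mu_0A+U'=(A\oplus\mu_0A\oplus C)\oplus K'$, and since $\lambda$ takes values in $A+\mu_0A$ the composite $A\oplus\mu_0A\oplus C\hookrightarrow A+\mu_0A+U'\twoheadrightarrow D_1$ is an isomorphism: it is onto because each $x\in K'$ is identified with $\lambda(x)$ in $D_1$, and it is injective because any element of $A\oplus\mu_0A\oplus C$ lying in $\funcgraph(\lambda)=\{x-\lambda(x):x\in K'\}$ has vanishing $K'$-component, forcing $x=0$.

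With this normal form, exactness follows by comparing $\partial$ with the splitting $D_0=\ZZ/4\{\Sq(R)\}\oplus\sum_{-1\le k<l}Y_{k,l}A$ of \myref{d0decomp}. On the three summands, $\partial$ sends $a_0\mapsto0$, $\mu_0a_1\mapsto 2a_1$ and $U_{k,l}a_{k,l}\mapsto Y_{k,l}a_{k,l}$, so the $\mu_0$-part lands in $2\ZZ/4\{\Sq(R)\}$ and the $U$-part in $\sum_{-1\le k<l}Y_{k,l}A$. These are independent summands of $D_0$, and each restriction is injective: $a\mapsto 2a$ is injective into $\ZZ/4\{\Sq(R)\}$, while $a\mapsto Y_{k,l}a$ is injective with image $Y_{k,l}A$ by Lemma \ref{d0props}(2). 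Hence $\ker\partial$ is exactly the $A$-summand and $A\to D_1$ is injective, which is exactness at $D_1$. For exactness at $D_0$, the image of $\partial$ is $2\ZZ/4\{\Sq(R)\}+\sum_{-1\le k<l}Y_{k,l}A$, and this equals $R_D=2D_0+\sum_{-1\le k<l}Y_{k,l}A$ since $2Y_{k,l}=0$ forces $2D_0=2\ZZ/4\{\Sq(R)\}$; together with $\ker\pi=R_D$ and the surjectivity of $\pi$ from Lemma \ref{d0props} this finishes the proof.
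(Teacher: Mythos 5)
Your proof is correct and takes essentially the same route as the paper: the paper's own (very terse) proof likewise rests on well-definedness of $\partial$ together with the direct-sum decomposition $D_1 = A + \mu_0A + \sum_{-1\le k<l} U_{k,l}A$, from which it declares exactness obvious. Your change-of-basis argument splitting $U' = K' \oplus C$, the generator-by-generator check that $\partial$ kills $\funcgraph(\lambda)$, and the comparison with the splitting \myref{d0decomp} of $D_0$ simply supply the details the paper omits.
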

    \begin{proof}
      Lemma \ref{lambdadef} shows that $D_1$ is indeed a bimodule. 
      That $\partial$ is well-defined and bilinear
      follows from the relations \myref{yklcomm}.
      Finally, $D_1$ can be written as the direct sum 
      $$D_1=A+\mu_0A+\sum_{-1\le k<l} U_{k,l}A.$$
      From this the exactness of the sequence is obvious.
    \end{proof}

    \end{subsection}
    \begin{subsection}{Represented Functors}
      Some of the previous constructions can be given meaningful descriptions
      when we look at their associated functors. Unfortunately, we have not been
      able to find a good explication for the map $\lambda$, so we eventually have to
      resort to pure algebra in our construction of $D_\bullet$.

      Let $\catAlg{\ZZ/4}$ be the category of commutative algebras over $\ZZ/4$.
      \begin{lemma}\label{d0modular}
        There is a natural isomorphism
        $\Hom_{\catAlg{\ZZ/4}}\left({D_0}\us,-\right) \overset\cong{\longrightarrow} G(-)$
        where $G(R)$ $\subset$ $R[[x]]$ is the group
        \begin{align*}
          \Big\{\, f(x) = \sum_{k\ge 0} t_kx^{2^k} + \sum_{0\le k<l} t_{k,l}x^{2^k+2^l}
          \,\Big\vert\,
          \text{$t_0=1$, $J^2=0$ for $J=(2,t_{k,l})\subset R$}\,\Big\}.
        \end{align*}
      \end{lemma}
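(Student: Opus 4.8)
The left-hand functor is represented by ${D_0}\us$, so by Yoneda a natural transformation into $G$ is the same as a single element of $G({D_0}\us)$. I would take the tautological series
\[
  \Xi(x)\defeq\sum_{k\ge 0}\xi_k x^{2^k}+\sum_{0\le k<l}\bigl(2\xi_{k,l}\bigr)x^{2^k+2^l}\ \in\ {D_0}\us[[x]].
\]
Here $\xi_0=1$, and for $J=(2,2\xi_{k,l})\subset{D_0}\us$ one has $2\cdot 2\xi_{k,l}=4\xi_{k,l}=0$, $(2\xi_{k,l})(2\xi_{m,n})=4\xi_{k,l}\xi_{m,n}=0$ and $4=0$, so $J^2=0$ and $\Xi\in G({D_0}\us)$. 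The resulting transformation $\Theta$ sends an algebra map $\phi$ to the series $\phi\us\Xi$ obtained by applying $\phi$ to every coefficient; its naturality is automatic, so the only real task is to invert it.

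For the inverse I would first make the presentation of ${D_0}\us$ explicit. The basis $\{\xi^R,\,2\xi^R\xi_{k,l}\}$ shows that ${D_0}\us$ is the quotient of the polynomial $\ZZ/4$-algebra on generators $\xi_k$ ($k\ge 1$) and $\eta_{k,l}$ ($0\le k<l$), with $\eta_{k,l}$ corresponding to $2\xi_{k,l}$, by the ideal $(2\eta_{k,l},\ \eta_{k,l}\eta_{m,n})$. Granting this, every $f\in G(R)$ with coefficients $t_k,t_{k,l}$ determines a unique algebra map $\phi_f\colon{D_0}\us\to R$ by $\xi_k\mapsto t_k$ and $\eta_{k,l}\mapsto t_{k,l}$: the two families of defining relations are sent to $2t_{k,l}$ and $t_{k,l}t_{m,n}$, both of which vanish exactly because $J^2=0$ for $J=(2,t_{k,l})$. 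One sees at once that $f\mapsto\phi_f$ is two-sided inverse to $\Theta$ and natural in $R$, which already gives the isomorphism of set-valued functors.

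It remains to match the group structures. The coproduct $\Delta$ makes the left-hand functor group-valued, and transporting this law along $\Theta$ is what equips $G$ with its group structure; identifying it with the ``composition modulo $4$'' of the introduction comes down to checking that the displayed formulas for $\Delta(\xi_n)$ and $\Delta(\xi_{n,m})$ compute, after applying $\phi\otimes\psi$ and multiplying out in $R$, the coefficients of the composed power series reduced modulo $4$. This is the main obstacle: one must use $2\eta_{k,l}=0$ and $\eta_{k,l}\eta_{m,n}=0$ to see that the non-additive corrections to composition either vanish or reorganize precisely into the $x^{2^k+2^l}$-coefficients recorded by $\Delta(\xi_{n,m})$, which is also what keeps the product within $G$. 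Verifying this presentation of ${D_0}\us$ together with these coproduct identities is routine but tedious; everything else is formal.
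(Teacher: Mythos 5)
Your set-level argument is correct and follows the same route as the paper, whose entire proof consists of your forward map $\Theta$: a homomorphism $\phi\colon{D_0}\us\rightarrow R$ is sent to the $f$ with $t_k=\phi(\xi_k)$ and $t_{k,l}=\phi(2\xi_{k,l})$, everything else being left as straightforward. Your elaboration is sound: the stated basis $\{\xi^R,\,2\xi^R\xi_{k,l}\}$ does justify the presentation ${D_0}\us\cong\ZZ/4[\xi_k,\eta_{k,l}]/(2\eta_{k,l},\,\eta_{k,l}\eta_{m,n})$, and your observation that well-definedness of $\phi_f$ is \emph{exactly} the condition $J^2=0$ (since $4=0$ automatically in a $\ZZ/4$-algebra, $J^2=0$ is equivalent to $2t_{k,l}=0$ and $t_{k,l}t_{m,n}=0$) makes the bijection precise, which is more than the paper records.

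There is, however, one claim in your final paragraph that you should not defer as ``routine but tedious,'' because taken literally it is false: the assertion that the cross-term analysis ``keeps the product within $G$.'' The conditions $J_f^2=0$ and $J_g^2=0$ are imposed on each series separately and do \emph{not} kill cross products $t_{k,l}s_{m,n}$ of mixed coefficients coming from two different series. Concretely, in $R=\ZZ/4[t,s]/(2t,2s,t^2,s^2)$ take $f(x)=x+tx^5$ and $g(x)=x+sx^3$; both lie in $G(R)$, yet since $g(x)^2=x^2$ one computes $f(g(x))=x+sx^3+tx^5+ts\,x^7$ with $ts\neq0$, so the honest composite acquires an $x^7$-term and leaves the allowed shape. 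What is true---and what your own setup already provides---is that the group law on $G$ is the one transported along $\Theta$, i.e.\ composition computed with all products $t_{k,l}s_{m,n}$ discarded; these vanish identically in the universal case because there the mixed coefficients are $2\xi_{k,l}\otimes1$ and $1\otimes2\xi_{m,n}$, each a multiple of $2$, and $4=0$ in ${D_0}\us\otimes{D_0}\us$. So the fix is only to weaken your last sentence: the coproduct formulas reproduce literal composition exactly when $J_fJ_g=0$ in $R$, and in general ``composition modulo $4$'' must be read as the transported law. (This imprecision is inherited from the paper's informal description of $G(R)$ as a group under composition; the paper's one-line, set-level proof never confronts it, whereas your more ambitious outline would have run into it had you carried out the postponed check.)
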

      \begin{proof}
        A $\phi:{D_0}\us\rightarrow R$ maps to the $f$ with $t_k=\phi(\xi_k)$
        and $t_{k,l}=\phi(2\xi_{k,l})$.
      \end{proof}
      
      The bimodules $U$ and $V$ can be understood by looking at the 
      functors
      \begin{align*}
        V_!(R) &= G(R) \times \Big\{ v(x) = \sum_{k\ge 0} v_{k}x^{2^k} 
        \,\Big\vert\,
        v(x)^2 = 2v(x) = 0
        %\text{$J^2=0$ for $J=(2,v_{k})\subset R$}
        \,\Big\}, \\
        U_!(R) &= G(R) \times \Big\{ f_2(x,y) = \sum_{k,l\ge 0} u_{k,l}x^{2^k}y^{2^l} 
        \,\Big\vert\,
        f_2(x,y)^2 = 2f_2(x,y) = 0
        %\text{$J^2=0$ for $J=(2,u_{k,l})\subset R$}
        \,\Big\}.
      \end{align*}
      The group operation is given by 
      $(f_1,v)$ $\circ$ $(g_1,w)$ $=$ $(f_1g_1,vg_1+w)$ resp.\ 
      $(f_1,f_2)$ $\circ$ $(g_1,g_2)$ $=$ $(f_1g_1,f_2(g_1\times g_1)+g_2)$.
      
      $V_!$ and $U_!$ are represented by algebras 
      ${D_0}\us[v_k]/J^2$ and ${D_0}\us[u_{k,l}]/J^2$
      where $J$ is the ideal $(2,v_k)$ resp.\ $(2,u_{k,l})$.
      $V$ and $U$ can then be recovered as the duals of the degree $1$ part of these
      algebras.

      We can use this to at least partially explain the map from $U$ to $D_0$.
      \begin{lemma}
        The map $\phi:U\rightarrow D_0$ with $U_{k,l}\mapsto Y_{k,l}$ 
        and $U_{k,k}\mapsto 2Q_{k+1}$ is associated to the natural transformation
        $$U(R)\owns f=(f_1,f_2) \mapsto f^{\rm eff}\in G(R)$$
        with $f^{\rm eff}(x) = f_1(x) + f_2(x,x)$.
      \end{lemma}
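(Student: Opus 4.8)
The plan is to realize $\eta\colon f\mapsto f^{\rm eff}$ on representing objects and then dualize. Write $T={D_0}\us$ for the Hopf algebra representing $G$ (Lemma~\ref{d0modular}) and $S=T[u_{k,l}]/J^2$, $J=(2,u_{k,l})$, for the algebra representing $U_!$, so that $U$ is the dual of the $u$-linear part of $S$. First I would check that $f^{\rm eff}$ really lands in $G(R)$. Collecting the coefficients of $f^{\rm eff}(x)=f_1(x)+f_2(x,x)$ gives
\begin{align*}
  f^{\rm eff}(x)=x+\sum_{k\ge1}\bigl(t_k+u_{k-1,k-1}\bigr)x^{2^k}
  +\sum_{0\le k<l}\bigl(t_{k,l}+u_{k,l}+u_{l,k}\bigr)x^{2^k+2^l},
\end{align*}
which has the required leading term $x$; what then remains is that the ideal generated by $2$ together with the off-diagonal coefficients $t_{k,l}+u_{k,l}+u_{l,k}$ squares to zero, which is forced by the relations $2u_{k,l}=u_{k,l}u_{m,n}=0$ defining $f_2$ and the square-zero relations already satisfied by $f_1\in G(R)$.

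Next I would read off the induced map. By Yoneda, $\eta$ corresponds to the $\ZZ/4$-algebra homomorphism $\eta\os\colon T\to S$ obtained by substituting the universal element of $U_!(S)$ into the coefficient formula above; this gives
\begin{align*}
  \eta\os(\xi_n)=\xi_n+u_{n-1,n-1},\qquad
  \eta\os(2\xi_{k,l})=2\xi_{k,l}+u_{k,l}+u_{l,k}.
\end{align*}
Since products of $u$'s vanish, the degree-zero part of $\eta\os$ is the inclusion $T\hookrightarrow S$, and its $u$-linear component $\eta\os_1$ is a derivation over the reduction $T\to A\us$. By construction the bimodule map $\phi$ is exactly its transpose, $\langle\phi(\alpha),t\rangle=\langle\alpha,\eta\os_1(t)\rangle$ for $\alpha\in U$ and $t\in T$.

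It then remains to evaluate $\phi$ on the generators, for which I would use that $U_{k,l}$ is dual to $u_{k+1,l+1}$ (the weights $2^{k+1},2^{l+1}$ in \myref{ucomm} fix this shift). Pairing against the basis $\{\xi^S,\,2\xi_{m,n}\xi^S\}$ of $T$ and using that $\eta\os_1$ is a derivation, one sees that $u_{k+1,k+1}$ occurs with unit coefficient only in $\eta\os(\xi_{k+2})$ and $u_{k+1,l+1}$ ($k\ne l$) only in $\eta\os(2\xi_{k+1,l+1})$. Hence $\phi(U_{k,k})$ is dual to $\xi_{k+2}=\xi^{\Delta_{k+2}}$ and $\phi(U_{k,l})$ to $2\xi_{k+1,l+1}$; comparing with the pairings of Lemma~\ref{d0props} identifies these with $2Q_{k+1}=2\Sq(\Delta_{k+2})$ and $Y_{k,l}$ respectively, as claimed.

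The step I expect to require the most care is this last identification, because of a factor of $2$. The $u$-linear part of $S$ is annihilated by $2$, so any functional on it, regarded as an element of $D_0=\Hom(T,\ZZ/4)$, takes values in $\{0,2\}$; this is precisely why the pairing $\langle Y_{k,l},2\xi_{k+1,l+1}\rangle=2$ of Lemma~\ref{d0props} carries a factor $2$ rather than $1$, and matching this $\FF_2$-versus-$\{0,2\}$ normalization correctly is the delicate point. I would also note that $\eta$ is merely a pointed natural transformation and not a homomorphism of group functors—its multiplicative defect is quadratic in the nilpotents $u_{k,l}$ and hence invisible on the $u$-linear part—so that the $A$-bilinearity of $\phi$ is cleanest to check directly, by observing that $U_{k,l}\mapsto Y_{k,l}$ carries the commutation rule \myref{ucomm} verbatim into \myref{yklcomm}.
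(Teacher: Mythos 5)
Your proposal is correct and matches the paper's argument in essence: the paper likewise works on representing objects, writing ${D_0}\us[u_{k,l}]/J^2={D_0}\us[2w_{k,l}]$ (your $\{0,2\}$-normalization), observing that the $u$-linear part is a derivation so that the total map is multiplicative, and checking the generator formulas $\xi_{n+1}\mapsto\xi_{n+1}+2w_{n,n}$ and $2\xi_{k,l}\mapsto 2\xi_{k,l}+2w_{k,l}+2w_{l,k}$. You merely run the same computation in the opposite direction (from the transformation $f\mapsto f^{\rm eff}$ to its classifying map $\eta\os$, then transposing to identify $\phi$, where the paper starts from $\phi\us$ and identifies the transformation), and you correctly flag the factor-of-$2$ bookkeeping that the paper handles via the $w_{k,l}$.
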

      \begin{proof}        
        We have an isomorphism ${D_0}\us[u_{k,l}]/J^2 = {D_0}\us[2w_{k,l}]$
        and will use the $w_{k,l}$ in our computation for the sake of clarity.
        Recall that 
        $\langle Q_ka,p\rangle = \langle a,(\partial p)/(\partial \xi_{k+1})\rangle$
        for $a\in A$, $p\in A\us$. 
        Therefore the dual $\phi\us: {D_0}\us\rightarrow U\us$
        is given by 
        $$p \mapsto 2\sum_{k\ge 0} (\partial p)/(\partial \xi_{k+1}) w_{k,k}
        + \sum_{0\le k<l} 2(\partial p)/(\partial \xi_{k,l}) \left(w_{k,l}+w_{l,k}\right).
        $$
        The map $\widehat{\phi\us}:{D_0}\us\rightarrow {D_0}\us[2w_{k,l}]$ 
        with
        $p\mapsto p + \phi\us(p)$ is multiplicative since $\phi\us$ is a derivation.
        It therefore does correspond to a 
        natural transformation $U_!(R)\rightarrow G(R)$.
        To see that this transformation is $f\mapsto f^{\rm eff}$ 
        one just has to check that
        $\widehat{\phi\us}(\xi_{n+1}) = \xi_{n+1} + 2w_{n,n}$ and
        $\widehat{\phi\us}(2\xi_{k,l}) = 2\xi_{k,l} + 2w_{k,l} + 2w_{l,k}$.
      \end{proof}
      The bilinearity of $\phi$ expresses the fact, that
      $f\mapsto f^{\rm eff}$ is multiplicative. 
      This is also easy to see computationally.
      \begin{lemma}
        One has $(fg)^{\rm eff} = f^{\rm eff} \circ g^{\rm eff}$.
      \end{lemma}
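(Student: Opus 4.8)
The plan is to verify the identity by direct substitution, exploiting throughout the nilpotency $J^2=0$ of the ideal $J=(2,u_{k,l})$ in which the coefficients of the $f_2$-part live. First I would unwind both sides. Writing $f=(f_1,f_2)$ and $g=(g_1,g_2)$, the group law $fg=(f_1g_1,\,f_2(g_1\times g_1)+g_2)$ together with the definition $h^{\rm eff}(x)=h_1(x)+h_2(x,x)$ gives
\begin{align*}
  (fg)^{\rm eff}(x) &= f_1(g_1(x)) + f_2(g_1(x),g_1(x)) + g_2(x,x).
\end{align*}
On the other side, abbreviating $\eta(x):=g_2(x,x)$ so that $g^{\rm eff}(x)=g_1(x)+\eta(x)$, one has
\begin{align*}
  f^{\rm eff}\circ g^{\rm eff}(x) &= f_1\bigl(g_1(x)+\eta(x)\bigr) + f_2\bigl(g^{\rm eff}(x),g^{\rm eff}(x)\bigr),
\end{align*}
and the goal is to match this term by term with the previous display.

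The key observation is that $\eta$ has all its coefficients in $J$ and satisfies $\eta^2=0$, so substituting $g_1\mapsto g_1+\eta$ contributes only a single infinitesimal correction. Concretely I would expand $(g_1+\eta)^{2^k}=g_1^{2^k}+2^kg_1^{2^k-1}\eta$ and note that for $k\ge 1$ the prefactor $2^k$ already lies in $J$, so $2^kg_1^{2^k-1}\eta$ has coefficients in $2J\subseteq J^2=0$ and drops out; only the $k=0$ term survives, contributing exactly $\eta$ since $t_0=1$. The mixed monomials $t_{k,l}x^{2^k+2^l}$ of $f_1$ already carry a coefficient $t_{k,l}\in J$, so their $\eta$-corrections land in $J^2=0$ as well. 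This yields the clean identity $f_1\bigl(g^{\rm eff}(x)\bigr)=f_1(g_1(x))+g_2(x,x)$.

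The same $J^2=0$ argument handles the quadratic part: since every coefficient $u_{k,l}$ of $f_2$ lies in $J$, expanding $f_2(g_1+\eta,g_1+\eta)$ produces corrections of the shape $u_{k,l}\cdot(\cdots)\cdot\eta$, which vanish because the coefficients of $\eta$ also lie in $J$. Hence $f_2\bigl(g^{\rm eff}(x),g^{\rm eff}(x)\bigr)=f_2(g_1(x),g_1(x))$. Substituting both simplifications gives $f^{\rm eff}\circ g^{\rm eff}(x)=f_1(g_1(x))+g_2(x,x)+f_2(g_1(x),g_1(x))$, which agrees with $(fg)^{\rm eff}(x)$. I expect the only delicate point to be the bookkeeping of the $k=1$ contribution: the factor $2^1=2$ is nonzero in $\ZZ/4$, so one must invoke $2\in J$ (rather than $2=0$) to kill $2g_1\eta$ through $J^2=0$; everything else is formal. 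This exhibits $(-)^{\rm eff}\colon U_!\to G$ as a homomorphism of group-valued functors, matching the bilinearity of $\phi$ recorded above.
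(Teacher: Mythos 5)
Your proof is correct and follows essentially the same route as the paper's: expand both sides and observe that every correction term to the substitution $g_1 \mapsto g_1 + g_2(x,x)$ dies by nilpotency ($2g_2 = 0$, $g_2(x,x)^2 = 0$, and square-zero infinitesimal coefficients), with the bonus that you spell out the mixed-term corrections $t_{0,l}\cdot g_2(x,x)\cdot g_1(x)^{2^l}$ and the $2g_1\eta$ term that the paper's terse justification ``$g_2^k=0$ for $k\ge 2$'' leaves tacit. The only caveat, cosmetic here because the paper relies on the same convention, is that your single ideal $J$ silently absorbs the infinitesimal coefficients of both $f$ and $g$ at once, so that cross-products such as $t^f_{0,l}\cdot u^g_{m,n}$ vanish; strictly, the definitions of $G(R)$ and $U_!(R)$ attach a square-zero ideal to each element separately, and this identification is part of the implicit setup rather than of either proof.
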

      \begin{proof}
        We have
        \begin{align*}
          (fg)^{\rm eff}(x) &= f_1(g_1(x)) + f_2(g_1(x),g_1(x)) + g_2(x,x), \\
          f^{\rm eff}(g^{\rm eff}(x)) &=
          f_1(g_1(x)+g_2(x,x)) + f_2(g_1(x)+g_2(x,x),g_1(x)+g_2(x,x)).
        \end{align*}
        Since  $g_2^k = 0$ for $k\ge 2$ 
        we have
        \begin{align*}
          f_1(g_1(x)+g_2(x,x)) &= f_1(g_1(x))+g_2(x,x),\\
          f_2(g_1(x)+g_2(x,x),g_1(x)+g_2(x,x)) &= f_2(g_1(x),g_1(x))
        \end{align*}
        which implies $(fg)^{\rm eff}(x) = f^{\rm eff}(g^{\rm eff}(x))$.
      \end{proof}
      
    %Note that this Lemma implies the relations \myref{yklcomm} and \myref{yklrel}.

    \end{subsection}
  \end{section}

  %%%%%%%%%%%%%%%%%%%%%%%%%%%%%%%%%%%%%%%%%%%%%%%%%
  %%%%%%%%%%%%%%%%%%%%%%%%%%%%%%%%%%%%%%%%%%%%%%%%%
  %%%%%%%%%%%%%%%%%%%%%%%%%%%%%%%%%%%%%%%%%%%%%%%%%
  %%%%%%%%%%%%%%%%%%%%%%%%%%%%%%%%%%%%%%%%%%%%%%%%%
  %%%%%%%%%%%%%%%%%%%%%%%%%%%%%%%%%%%%%%%%%%%%%%%%%

  \begin{section}{The construction of \texorpdfstring{$E_\bullet$}{E}}\label{edefsect}
    We now prepare ourselves for the comparison between
    our $D_\bullet$ and the $B_\bullet$ of Baues.
    It turns out that an intermediate $E_\bullet$ is required.
    The reason is that $D_\bullet$, although 
    sufficient for the computational
    applications of the theory, does not capture all of the 
    structure of $B_\bullet$. The latter carries a comultiplication
    which turns it into a {\em secondary Hopf algebra} and
    the associated invariants $L$ and $S$ are crucial for the 
    comparison.
    We will therefore now pass to a slightly larger $E_\bullet$
    where this extra structure can be expressed.
    \begin{subsection}{Definition}
      Let $X = \sum_{-1\le k,l} X_{k,l}A$ be a copy of $U$ with $U_{k,l}$
      renamed $X_{k,l}$ and let 
      $X'\subset X$ be the subspace without $X_{-1,-1}A$.
      Let $\widehat{E_k} = D_k + X' + \mu_0X'$ for $k=0,1$.
      We will write $e = e_D + e_X$ for the decomposition of $e\in \widehat{E_k}$
      into the $D_k$ and $X + \mu_0X$ components.
      Let $\nproj:E_\bullet\rightarrow D_\bullet$ denote the projection $e\mapsto e_D$.
      We extend $\partial$ to $\widehat{E_\bullet}$ via  
      $\partial e = \partial e_D + e_X$.
      This defines an exact sequence
      \begin{align}\label{etildedef}
        \xymatrix@1@R=-0.5em{ A \ar@{ >->}[r] &
          \raisebox{0pt}[0pt][0pt]{$\widehat{E_1}$} \ar[r]^-{\partial} &
          \raisebox{0pt}[0pt][0pt]{$\widehat{E_0}$} \ar@{ ->>}[r]^-{\pi} & A.  }
      \end{align} 
      We need to define a multiplication on $\widehat{E_0}$.
      Note that there is an isomorphism 
      $U\cong V\otimes_A V$ where $U_{k,l}\leftrightarrow V_k\otimes V_l$.
      We can therefore write $X_{k,l}=X_kX_l$ where the 
      $X_k$ are generators of a copy $V_X$ of $V$. 
      Let $\psi:A\rightarrow V_X'$ be given by 
      $\psi(a) = \sum_{k\ge 0} X_k\cont(\xi_{k+1},a)$.
      $\psi$ is a derivation because one has $\psi(a) = X_{-1}a-aX_{-1}$.
      Recall that  $\kappa:A\rightarrow A$  is also a derivation.
      \begin{lemma}
        Let $\ast:D_0\otimes D_0\rightarrow D_0 + X + \mu_0X$ be given by
        \begin{align}\label{muldef}
          a\ast b&=ab+\psi(a)\psi(b)\mu_0+X_{-1}\psi(a)\kappa(b)
        \end{align}
        and extend this to all of $\widehat{E_0}$ via
        $d\ast m = \pi(d)m$, $m\ast d = m\pi(d)$ and $mm'=0$ 
        for $d\in D_0$ and $m,m'\in X+\mu_0X$.
        Then $\ast$ is associative.
      \end{lemma}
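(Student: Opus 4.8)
The plan is to view $\ast$ as a perturbation of the product of $D_0$ by a twisting cochain valued in the $D_0$-bimodule $X+\mu_0X$, so that associativity turns into a Hochschild $2$-cocycle condition forced by the fact that $\psi$ and $\kappa$ are derivations. First I would clear away every triple product in which at least one factor lies in $X+\mu_0X$. Since $mm'=0$ for $m,m'\in X+\mu_0X$, any product with two or three such factors vanishes on both sides. The cases with a single module factor reduce, via $d\ast m=\pi(d)m$ and $m\ast d=m\pi(d)$, to the multiplicativity of $\pi$ and to the associativity of the $A$-bimodule $X+\mu_0X$; for example $(d\ast d')\ast m=\pi(d)\pi(d')\,m=d\ast(d'\ast m)$, the twisting term of $d\ast d'$ being killed by $m$. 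What remains is associativity on $D_0\otimes D_0\otimes D_0$.

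For $a,b,c\in D_0$ I would write $a\ast b=ab+\theta(a,b)$ with $\theta(a,b)=\psi(a)\psi(b)\mu_0+X_{-1}\psi(a)\kappa(b)$, where $\psi$ and $\kappa$ are understood to be applied after $\pi$. Expanding both sides and using $(ab)c=a(bc)$ in $D_0$, associativity is equivalent to the identity
\begin{align*}
\pi(a)\,\theta(b,c)-\theta(ab,c)+\theta(a,bc)-\theta(a,b)\,\pi(c)=0,
\end{align*}
that is, to $\theta$ being a Hochschild $2$-cocycle of $D_0$ with coefficients in $X+\mu_0X$. Denote the coboundary on the left by $\delta\theta$.

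I would check this on the two summands $\theta_1(a,b)=\psi(a)\psi(b)\mu_0\in\mu_0X$ and $\theta_2(a,b)=X_{-1}\psi(a)\kappa(b)\in X$ separately. Because $\psi$ is a derivation, the assignment $(a,b)\mapsto\psi(a)\psi(b)$ is itself a cocycle: the cross terms $\psi(a)\,\pi(b)\,\psi(c)$ cancel. Likewise the derivation properties of $\psi$ and $\kappa$ make almost all terms of $\theta_2$ telescope; the one leftover comes from the noncommutativity of $X_{-1}$ with $A$, namely $aX_{-1}=X_{-1}a-\psi(a)$ (a rewriting of $\psi(a)=X_{-1}a-aX_{-1}$), and equals $-\psi(a)\psi(b)\kappa(c)\in X$.

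The step I expect to be the main obstacle is to see that this leftover is annihilated by a matching contribution from $\theta_1$, which forces one to use that $X+\mu_0X$ is not a trivial direct sum. Right multiplication carries the $\mu_0X$-summand into the $X$-summand through $a\mu_0=\mu_0a+\kappa(a)$, so that $(w\mu_0)\,\pi(c)=(w\,\pi(c))\mu_0-w\,\kappa(c)$ for $w\in X$; with $w=\psi(a)\psi(b)$ the term $-\theta_1(a,b)\pi(c)$ therefore contributes $+\psi(a)\psi(b)\kappa(c)$ to the $X$-component, while left multiplication never leaves $\mu_0X$ and the pure $\mu_0X$-parts of $\delta\theta_1$ cancel among themselves. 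Hence the $X$-components of $\delta\theta_1$ and $\delta\theta_2$ are $\pm\psi(a)\psi(b)\kappa(c)$ and add to zero, proving the cocycle identity and so the associativity of $\ast$. The genuinely delicate part is this bookkeeping of the $\kappa$-corrections, confirming that the placement of $\mu_0$ and the factor $X_{-1}$ in \myref{muldef} are exactly what makes the two residuals coincide.
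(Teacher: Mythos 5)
Your proof is correct, and it reduces to the same core identity as the paper, but packaged differently. The paper handles the mixed cases in one sentence (as you do in your first paragraph) and then simply expands $(a\ast b)\ast c$ and $a\ast(b\ast c)$ in full, using the derivation property of $\psi$ and $\kappa$, the rule $a\mu_0=\mu_0a+\kappa(a)$, and $aX_{-1}=X_{-1}a+\psi(a)$, matching the resulting eight terms on each side. You instead recast associativity as the Hochschild $2$-cocycle condition for the perturbation $\theta=\theta_1+\theta_2$ and compute $\delta\theta_1$ and $\delta\theta_2$ separately, finding both equal to $\psi(a)\psi(b)\kappa(c)$ up to sign (the sign is immaterial since $X+\mu_0X$ is $2$-torsion), so the residuals cancel. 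The computational content is identical --- your residual from $\theta_1$ is the paper's term produced by $\psi(a)\psi(b)\mu_0c=\psi(a)\psi(b)c\mu_0+\psi(a)\psi(b)\kappa(c)$, and your residual from $\theta_2$ is the paper's term produced by $aX_{-1}\psi(b)\kappa(c)=X_{-1}a\psi(b)\kappa(c)+\psi(a)\psi(b)\kappa(c)$ --- but your framing buys some structural insight: it makes explicit that each summand of the twisting term fails the cocycle condition by exactly the same defect, i.e.\ why the placement of $\mu_0$ and $X_{-1}$ in \myref{muldef} is rigged precisely so. One small wording caveat: your claim that ``left multiplication never leaves $\mu_0X$'' holds only under your (implicit, consistently used) convention of writing $\theta_1$-terms with $\mu_0$ on the right, so that the left factor $a$ never crosses $\mu_0$; if such an element is first rewritten in the form $\mu_0x$, then left multiplication does spill $\kappa(a)$-terms into $X$. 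As written your bookkeeping is sound, but that sentence deserves the clarification.
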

      \begin{proof}
        The only questionable case is when all three factors are in $D_0$.
        But this is a straightforward computation:
        \begin{align*}
          &(a\ast b)\ast c = \\
          &=abc+ \psi(ab)\psi(c)\mu_0+X_{-1}\psi(ab)\kappa(c)
          +\psi(a)\psi(b)\mu_0c+X_{-1}\psi(a)\kappa(b)c \\
          &= abc+\psi(a)b\psi(c)\mu_0+a\psi(b)\psi(c)\mu_0
          +X_{-1}\psi(a)b\kappa(c)+X_{-1}a\psi(b)\kappa(c)\\
          &\qquad+\psi(a)\psi(b)c\mu_0+\psi(a)\psi(b)\kappa(c)
          +X_{-1}\psi(a)\kappa(b)c, \\
          &a\ast (b\ast c) = \\
          &=abc+\psi(a)\psi(bc)\mu_0+X_{-1}\psi(a)\kappa(bc) 
          + a\psi(b)\psi(c)\mu_0+aX_{-1}\psi(b)\kappa(c) \\
          &= abc+\psi(a)b\psi(c)\mu_0+\psi(a)\psi(b)c\mu_0
          +X_{-1}\psi(a)\kappa(b)c+X_{-1}\psi(a)b\kappa(c) \\
          &\qquad+a\psi(b)\psi(c)\mu_0
          +X_{-1}a\psi(b)\kappa(c)+\psi(a)\psi(b)\kappa(c).
        \end{align*}
      \end{proof}

      Figure \ref{ademtab}
      illustrates the multiplication in $E_0$
      with the computation of the first few Adem relations.

      We will define $E_0\subset \widehat{E_0}$ by a condition
      on the coefficients of $Y_{-1,\ast}$, $X_{-1,\ast}$ and $X_{\ast,-1}$.
      To formulate that condition we need to define two more maps.
      \begin{lemma}\label{psitheta}
        Let $\theta_D:D_0\rightarrow V$
        be the map that extracts the $Y_{-1,k}$.
        In other words, let
        $$\theta_D(\Sq(R))=0,\quad
        \theta_D(Y_{-1,n}a) = V_na,\quad
        \theta_D(Y_{k,l}a) = 0 \quad\text{for $k\not=-1$}.$$
        Then $\widehat{\theta_D}:D_0\rightarrow V+\mu_0V$ with
        $\widehat{\theta_D}(d) = \theta_D(d) + \psi(d)\mu_0$
        is a derivation.
      \end{lemma}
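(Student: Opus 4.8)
The plan is to verify the Leibniz rule for $\widehat{\theta_D}$ and, by absorbing the part that is already a derivation, to reduce it to a single identity inside $V$. Here $\psi$ is extended to $D_0$ by $\psi(d):=\psi(\pi(d))$, and $V+\mu_0V$ carries the bimodule structure of $V$ twisted by $a\mu_0=\mu_0a+\kappa(a)$. We must show
\begin{align*}
  \widehat{\theta_D}(dd') &= \widehat{\theta_D}(d)\,\pi(d') + \pi(d)\,\widehat{\theta_D}(d'),\qquad d,d'\in D_0 .
\end{align*}
Since $\pi$ is multiplicative and $\psi$ is a derivation, $\psi(dd')=\psi(d)\pi(d')+\pi(d)\psi(d')$. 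Substituting $\widehat{\theta_D}=\theta_D+\psi(-)\mu_0$, rewriting $\mu_0\pi(d')=\pi(d')\mu_0-\kappa(\pi(d'))$, and using that $V$ is $2$-torsion, all $\psi(-)\mu_0$ contributions cancel and the claim collapses to the $V$-valued identity
\begin{align}\label{thetaident}
  \theta_D(dd') &= \theta_D(d)\,\pi(d') + \pi(d)\,\theta_D(d') + \psi(d)\,\kappa(\pi(d')).
\end{align}
This reduction is routine; the content of the Lemma is \myref{thetaident}.

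To establish \myref{thetaident} I would dualise. The $Y_{-1,k}$-component of a product is detected by pairing against the basis vectors $2\xi_{0,k+1}\xi^S$ of ${D_0}\us$, so I compute $\langle dd',2\xi_{0,k+1}\xi^S\rangle$ through the coproduct. Specialising the formula for $\Delta(\xi_{n,m})$ to $n=0$ and dropping the summands killed by $\xi_0=1$ and $\xi_{-j}=0$ ($j>0$) leaves
\begin{align*}
  \Delta(\xi_{0,k+1}) &= \xi_{0,k+1}\otimes1 + \xi_{k+1}\otimes\xi_1 + \sum_{l\ge1}\xi_{k+1-l}^{2^l}\otimes\xi_{0,l}.
\end{align*}
The three summands match the three terms on the right of \myref{thetaident}. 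The first extracts the $Y_{-1,k}$-coefficient of the left factor and pairs the remaining $\xi^S$ against $d'$, yielding $\theta_D(d)\pi(d')$; the last extracts the $Y_{-1,l-1}$-coefficient of the right factor, while the powers $\xi_{k+1-l}^{2^l}$ reproduce the left action $aV_{l-1}=\sum_{i}V_{l-1+i}\cont(\xi_i^{2^l},a)$ and so give $\pi(d)\theta_D(d')$. The middle summand is decisive: its left leg computes $\cont(\xi_{k+1},\pi(d))$, the $X_k$-coefficient of $\psi(d)$, and its right leg computes $\cont(\xi_1,\pi(d'))=\kappa(\pi(d'))$, producing exactly the correction $\psi(d)\kappa(\pi(d'))$. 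Its presence is the structural reason that $\theta_D$ by itself fails to be a derivation.

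The one point needing care is the interaction with $\Delta(\xi^S)$, since one really expands $\Delta(2\xi_{0,k+1}\xi^S)=\Delta(2\xi_{0,k+1})\,\Delta(\xi^S)$ and $\Delta(\xi^S)$ in ${D_0}\us$ carries mod-$4$ corrections involving the generators $2\xi_{i,j}$. The key observation is that each of the three legs of $\Delta(2\xi_{0,k+1})$ already carries a factor $2$, so any further factor-$2$ correction from $\Delta(\xi^S)$ is annihilated because $4=0$ in $\ZZ/4$; equivalently, this is the dual manifestation of $R_D^2=0$ from Lemma~\ref{d0props}. Hence only $\Delta(\xi^S)\bmod 2$ --- the Milnor coproduct of the dual Steenrod algebra --- contributes, and the clean three-term matching above survives unchanged. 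Granting this, the term-by-term identification gives \myref{thetaident}, and the reduction of the first paragraph then shows that $\widehat{\theta_D}$ is a derivation. I expect the only real labour to be this finite bookkeeping: checking that no $Y_{-1,k}$ is produced outside the three legs and that the index arithmetic of the left $V$-action is matched exactly by the exponents $\xi_{k+1-l}^{2^l}$.
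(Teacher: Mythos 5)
Your proof is correct and follows essentially the same route as the paper's: reduce, using that $\psi$ is a derivation and the rule $a\mu_0=\mu_0a+\kappa(a)$, to the single identity $\theta_D(dd')=\theta_D(d)\pi(d')+\pi(d)\theta_D(d')+\psi(d)\kappa(\pi(d'))$, and then read off its three terms from the three legs of the coproduct of $\xi_{0,k+1}$. Your specialization $\Delta(\xi_{0,k+1})=\xi_{0,k+1}\otimes 1+\xi_{k+1}\otimes\xi_1+\sum_{l\ge 1}\xi_{k+1-l}^{2^l}\otimes\xi_{0,l}$ is the correct one (the formula displayed in the paper's proof has a small index slip, $\xi_{n-1}\otimes\xi_1$ in place of $\xi_n\otimes\xi_1$), and your observation that the factor $2$ carried by each leg annihilates the mod-$4$ corrections in $\Delta(\xi^S)$ --- the dual form of $R_D^2=0$ --- makes explicit a point the paper leaves implicit in calling the computation straightforward.
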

      \begin{proof}
        We sketch a quick computational proof here. A better argument will
        be given later from the functorial point of view.

        We already know that $\psi$ is a derivation, so we just need to show
        $\theta_D(de)=d\theta_D(e)+\theta_D(d)e + \psi(d)\kappa(e)$.
        Since $\theta_D$ sees only the $\xi_{0,n}$ 
        we can compute $\theta_D(de)$ from the coproduct formula
        $$\Delta \xi_{0,n} =  \xi_{0,n}\otimes 1 + \sum_{k\ge 0}
        \xi_{n-k}^{2^k}\otimes \xi_{0,k} + \xi_{n-1}\otimes\xi_1$$
        and these summands translate to
        $\theta_D(d)e$, $d\theta_D(e)$ and $\psi(d)\kappa(e)$.
      \end{proof}

      Similarly, let $\theta_E:\widehat{E_0}\rightarrow V$ 
      extract the $X_{-1,k}$:
      \begin{align*}
        &\theta_E(X_{-1,k}a) = V_ka,\quad
        \theta_E(X_{l,-1}a) = 0, \\
        &\theta_E\left(D_0+\mu_0X+\sum\nolimits_{k,l\ge 0} X_{k,l}A\right) = 0.\quad
      \end{align*}
      \begin{lemma}\label{e0defprep}
        One has 
        $\theta_E(d\ast e) = \theta_E(d)e+d\theta_E(e)+\psi(d_D)\kappa(e_D)$
        for $d,e\in \widehat{E_0}$.
      \end{lemma}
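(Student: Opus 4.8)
The plan is to exploit bilinearity of both $\ast$ and $\theta_E$ and reduce the claimed identity to a check on the four ``pure'' combinations obtained by decomposing $d=d_D+d_X$ and $e=e_D+e_X$ into their $D_0$- and $(X+\mu_0 X)$-components. Throughout I read the two terms $\theta_E(d)e$ and $d\,\theta_E(e)$ on the right as the $A$-bimodule action of $\pi(e_D)$ (resp.\ $\pi(d_D)$) on $\theta_E(d),\theta_E(e)\in V$, since by construction the $(X+\mu_0 X)$-component of an element of $\widehat{E_0}$ acts as zero on $V$. Two structural facts drive everything: first, $\theta_E$ annihilates all of $D_0$, all of $\mu_0 X$, and all of $\sum_{k,l\ge 0}X_{k,l}A$, so it detects only the coefficients of the $X_{-1,k}$ with $k\ge 0$; second, on that surviving part $\theta_E$ is compatible with left and right multiplication by $A$ through the rule $aV_k=\sum_i V_{k+i}\cont(\xi_i^{2^{k+1}},a)$, which is exactly the $\theta_E$-image of the corresponding commutation rule for the $X_{k,l}$.

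Three of the four cases are essentially formal. When both factors lie in $X+\mu_0 X$ we have $d_X\ast e_X=0$ while simultaneously $d_D=e_D=0$, so both sides vanish. When $d\in D_0$ and $e\in X+\mu_0 X$ we use $d\ast e=\pi(d)e$; writing $e_X$ in the basis $X_{k,l}A$ and applying the left commutation rule, only the $X_{-1,l}$ terms can feed into an $X_{-1,\ast}$ contribution, since $k+i=-1$ in $aX_{k,l}=\sum X_{k+i,l+j}\cont(\dots)$ forces $k=-1$, $i=0$; matching this against $\pi(d)\,\theta_E(e)$ computed through the $V$-rule yields $\theta_E(d\ast e)=d\,\theta_E(e)$, the $\mu_0 X$ part of $e$ being killed outright. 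The case $d\in X+\mu_0 X$, $e\in D_0$ is the mirror image using right multiplication and gives $\theta_E(d\ast e)=\theta_E(d)\,e$. In each of these three cases the twist term $\psi(d_D)\kappa(e_D)$ is absent precisely because one of $d_D,e_D$ vanishes, so the right-hand side collapses to the single surviving summand.

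The only case carrying real content is $d,e\in D_0$, where I expand $d\ast e=de+\psi(d)\psi(e)\mu_0+X_{-1}\psi(d)\kappa(e)$ via \myref{muldef}. Here $\theta_E(de)=0$ since $de\in D_0$, and $\theta_E(\psi(d)\psi(e)\mu_0)=0$ since, $\psi$ taking values in the $X_k$ with $k\ge 0$, the product $\psi(d)\psi(e)$ lands in $\sum_{k,l\ge 0}X_{k,l}A$ and is then pushed into $\mu_0 X$. The remaining summand is genuinely of $X_{-1,\ast}$ type: rewriting $X_{-1}\psi(d)=\sum_{k\ge 0}X_{-1,k}\cont(\xi_{k+1},d)$ and applying $\theta_E(X_{-1,k}c)=V_kc$ produces exactly $\sum_k V_k\cont(\xi_{k+1},d)\kappa(e)=\psi(d)\kappa(e)$. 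Since $\theta_E(d)=\theta_E(e)=0$, this matches the right-hand side $\psi(d_D)\kappa(e_D)$. This computation is the precise $E_0$-analogue of the derivation identity for $\theta_D$ established in Lemma \ref{psitheta}, with the role played there by the coproduct of $\xi_{0,n}$ taken over here by the explicit definition of $\ast$.

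I expect the only delicate point to be bookkeeping rather than genuine difficulty: one must check that left multiplication by $A$ never lowers an index below $-1$ (the commutation rules only raise indices), so that no spurious $X_{-1,\ast}$ contribution is created or destroyed, and that the formal copy $\mu_0 X$ remains uniformly invisible to $\theta_E$ regardless of how $A$ acts on it. Once these conventions are fixed, the four cases assemble immediately into the stated twisted-derivation identity.
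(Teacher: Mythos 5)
Your overall plan --- decompose $d$ and $e$ into $D_0$- and $(X+\mu_0X)$-components, read $\theta_E(d)e$ and $d\theta_E(e)$ through $\pi$, and reduce to four cases --- is exactly the ``straightforward computation'' the paper has in mind, and your treatment of the one substantive case $d,e\in D_0$ is correct: $\theta_E(de)=0$, the term $\psi(d)\psi(e)\mu_0$ contributes nothing visible, and $X_{-1}\psi(d)=\sum_k X_{-1,k}\cont(\xi_{k+1},d)$ turns $X_{-1}\psi(d)\kappa(e)$ into exactly $\psi(d)\kappa(e)$, which is the whole content of the twisted-derivation formula.

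The gap is the assertion you flag and then wave off in your last paragraph: that ``the formal copy $\mu_0X$ remains uniformly invisible to $\theta_E$ regardless of how $A$ acts on it.'' The left $A$-action on $\mu_0X$ is \emph{not} the untwisted one: the rule $a\mu_0=\mu_0a+\kappa(a)$ propagates to the formal copy, so $a\ast(\mu_0x)=\mu_0(ax)+\kappa(a)x$, and the summand $\kappa(a)x$ leaks into $X$, not $\mu_0X$. This twist is forced by associativity of $\ast$ --- it is visible in the paper's own associativity computation, where moving a factor past $\mu_0$ creates the term $\psi(a)\psi(b)\kappa(c)$; with the untwisted action you implicitly use, associativity itself fails by $\kappa(a)\psi(b)\psi(c)$, so your mixed case ``$d\in D_0$, $e\in\mu_0X$ is killed outright'' is proving the identity for the wrong product. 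Concretely, $\widehat{E_0}$ contains $\mu_0X_{-1,0}$ (only $X_{-1,-1}A$ is excluded from $X'$), and for $d=\Sq^1$, $e=\mu_0X_{-1,0}$ the leaked term is $\kappa(\Sq^1)X_{-1,0}=X_{-1,0}$, so $\theta_E(d\ast e)=V_0$ while your right-hand side is $0$. The repair is the same index bookkeeping you correctly applied to $X$, but now applied to the leak: $\kappa(a)x$ is $\theta_E$-invisible precisely when the $\mu_0X$-component involves only $X_{k,l}$ with $k,l\ge 0$. That is automatic for the $\mu_0$-terms created by $\ast$ itself (since $\psi$ lands in indices $\ge 0$; note also that the normal-form conversion $x\mu_0=\mu_0x+\sum X_{k,l}\kappa(b_{k,l})$ produces an $X$-correction your main case silently omits --- it is what generates, e.g., the bare $X_{0,0}$ in the $[1,2]$ row of Figure \ref{ademtab} --- harmless for the same index reason), and it holds for all elements of $\widetilde{E_0}$, which is where the Lemma is actually used in showing $E_0$ is multiplicatively closed; but your argument as written does not establish the stated identity for arbitrary $d,e\in\widehat{E_0}$ once the twisted action is taken into account.
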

      \begin{proof}
        This is a straightforward computation.
        See also the discussion in Remark \ref{emultdiscuss} below.
      \end{proof}
 
      \begin{lemma}
        Define 
        \begin{align*}
          \widetilde{E_0} &= D_0 + 
          \sum_{k,l\ge0} X_{k,l}A 
          + \sum_{k,l\ge0} \mu_0X_{k,l}A 
          + \sum_{k\ge 0} X_{-1,k}A\subset \widehat{E_0}
        \end{align*}
        and let $E_0\subset\widetilde{E_0}$ be the subset
        where $\theta_D\circ\nproj$ and $\theta_E$ coincide.
        Then $E_0$ is closed under the multiplication $\ast$.
      \end{lemma}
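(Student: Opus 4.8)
The plan is to encode the defining condition of $E_0$ by the single map $\delta=\theta_D\circ\nproj-\theta_E\colon\widehat{E_0}\rightarrow V$, so that $E_0=\{\,e\in\widetilde{E_0}\mid\delta(e)=0\,\}$. Closure of $E_0$ under $\ast$ then separates into two claims: (i) $\widetilde{E_0}$ is itself $\ast$-stable, and (ii) $\delta$ satisfies a twisted Leibniz rule $\delta(d\ast e)=\pi(d_D)\,\delta(e)+\delta(d)\,\pi(e_D)$, from which $\delta(d)=\delta(e)=0$ immediately forces $\delta(d\ast e)=0$. Granting both, any $d,e\in E_0$ give $d\ast e\in\widetilde{E_0}$ with $\delta(d\ast e)=0$, i.e.\ $d\ast e\in E_0$.

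For (ii) I would first observe that $\nproj$ is multiplicative: by \eqref{muldef} the only $D_0$-valued contribution to $a\ast b$ is the genuine product $ab$, while $d\ast m$, $m\ast d$ and $mm'$ contribute nothing to the $D_0$-component, so $\nproj(d\ast e)=d_D e_D$ and in particular $\pi(d\ast e)=\pi(d_D)\pi(e_D)$. Then I compute $\delta(d\ast e)$ by comparing two expansions. On the $D_0$-side, Lemma \ref{psitheta} (the derivation identity behind $\widehat{\theta_D}$) gives
\[
  \theta_D(\nproj(d\ast e))=\theta_D(d_D e_D)
  =\pi(d_D)\theta_D(e_D)+\theta_D(d_D)\pi(e_D)+\psi(d_D)\kappa(e_D),
\]
while Lemma \ref{e0defprep} gives $\theta_E(d\ast e)=\theta_E(d)\pi(e_D)+\pi(d_D)\theta_E(e)+\psi(d_D)\kappa(e_D)$. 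Subtracting, the two correction terms $\psi(d_D)\kappa(e_D)$ cancel, and regrouping the remaining four terms according to the left and right $A$-actions on $V$ yields exactly $\pi(d_D)\delta(e)+\delta(d)\pi(e_D)$. This cancellation is precisely what the definitions of $\theta_E$ and of $\ast$ were arranged to produce, so I expect (ii) to be short once Lemmas \ref{psitheta} and \ref{e0defprep} are available.

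The laborious part — and the main obstacle — is (i), the bookkeeping that the generators excluded from $\widetilde{E_0}$ (namely $X_{\ast,-1}$, $\mu_0X_{-1,\ast}$, $\mu_0X_{\ast,-1}$, and $X_{-1,-1}$) never reappear. Expanding $d\ast e=d_D\ast e_D+\pi(d_D)e_X+d_X\pi(e_D)$ (the $X$-$X$ cross term vanishes), I would check the three pieces in turn. In $d_D\ast e_D$ the term $\psi(d_D)\psi(e_D)\mu_0$ lands in $\sum_{k,l\ge0}\mu_0X_{k,l}A$ and $X_{-1}\psi(d_D)\kappa(e_D)$ in $\sum_{k\ge0}X_{-1,k}A$, since $\psi$ takes values in $V_X'$ and thus carries only indices $\ge0$; the governing general fact is that left multiplication by $A$ raises $V$- and $U$-indices via \eqref{ucomm} and its $V$-analogue, so an index can never drop to $-1$. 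For the cross terms, right multiplication by $\pi(e_D)$ only alters the $A$-coefficient and fixes every generator, while a left multiplication $\pi(d_D)e_X$ on an $X_{-1,k}A$-summand produces $X_{-1+i,\,k+j}A$ whose second index stays $\ge0$ and whose first index stays $\ge-1$; on a $\mu_0X_{k,l}A$-summand one first commutes $\pi(d_D)$ past $\mu_0$ through $a\mu_0=\mu_0a+\kappa(a)$ and then applies the same index-raising rule, the $\kappa$-correction landing harmlessly in $\sum_{k,l\ge0}X_{k,l}A$. Assembling these cases shows every summand of $d\ast e$ remains in $\widetilde{E_0}$, completing (i) and hence the proof.
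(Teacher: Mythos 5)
Your proposal is correct and follows essentially the same route as the paper, whose proof is just the two-sentence observation that $\widetilde{E_0}$ is $\ast$-stable because the multiplication cannot generate an $X_{k,-1}$ (or a $\mu_0 X$ with a $-1$ index) not already present, and that $E_0$ is then closed because $\theta_D\circ\nproj$ and $\theta_E$ obey identical Leibniz-type formulas (Lemmas \ref{psitheta} and \ref{e0defprep}), so their difference $\delta$ vanishes on products of elements where it vanishes. Your write-up merely makes these two steps explicit --- the only nitpick being that $\psi(d_D)\psi(e_D)\mu_0$ lands in $\sum_{k,l\ge0}\mu_0X_{k,l}A + \sum_{k,l\ge0}X_{k,l}A$ (moving $\mu_0$ left produces $\kappa$-corrections, visible in Figure \ref{ademtab}), which does not affect the containment in $\widetilde{E_0}$.
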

      \begin{proof}
        It's clear that $\widetilde{E_0}$ is multiplicatively closed
        since $\ast$ cannot generate any $X_{k,-1}$ 
        if this is not already part of one factor.
        
        That $E_0$ is also multiplicatively closed
        follows from the identical formulas for $\theta_D(de)$
        and $\theta_E(de)$.
      \end{proof}
      
      \begin{coro}
        Let $E_1=\partial^{-1}(E_0)\subset \widehat{E_1}$. Then
        \begin{align}\label{edef}
          \xymatrix@R=-0.5em{
            A \ar@{ >->}[r] & E_1
            \ar[r]^-{\partial} & E_0 \ar@{ ->>}[r]^-{\pi} & A.
          }
        \end{align} 
        is a crossed algebra $E_\bullet$ with a canonical 
        projection $\nproj:E_\bullet\twoheadrightarrow D_\bullet$.
      \end{coro}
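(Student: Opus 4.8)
The plan is to obtain $E_\bullet$ by cutting the auxiliary sequence $\widehat{E_\bullet}$ of \myref{etildedef} down to a sub-crossed-algebra, and then to read off that $\nproj$ is a morphism. First I would record the algebra structure of $E_0$: the preceding lemma shows that $E_0$ is closed under the associative multiplication $\ast$, and since $\theta_D(1)=0=\theta_E(1)$ the unit $1\in D_0$ lies in $E_0$, so $(E_0,\ast)$ is a unital subalgebra of $(\widehat{E_0},\ast)$.

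Next I would extend $\ast$ to a $\widehat{E_0}$-bimodule structure on $\widehat{E_1}$ by the formulas analogous to \myref{muldef}, reading the $D_1$-summand through the $A$-bimodule law $a\mu_0=\mu_0a+\kappa(a)$; the point of the construction is that $\partial\colon\widehat{E_1}\to\widehat{E_0}$ is then bilinear with $\partial(c\cdot x)=c\ast\partial x$ and $\partial(x\cdot c)=\partial x\ast c$. Granting this, the passage to $E_\bullet$ is formal: for $c\in E_0$ and $x\in E_1=\partial^{-1}(E_0)$ one has $\partial(c\cdot x)=c\ast\partial x\in E_0$, because $\partial x\in E_0$ and $E_0$ is $\ast$-closed, so $c\cdot x\in E_1$, and symmetrically $x\cdot c\in E_1$. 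Hence $E_1$ is an $E_0$-sub-bimodule and $\partial|_{E_1}$ is its bilinear differential.

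Exactness is inherited from \myref{etildedef}: since $A=\ker(\partial\colon\widehat{E_1}\to\widehat{E_0})$ maps to $0\in E_0$ we have $A\subset E_1$ and $\ker(\partial|_{E_1})=A$, while $\partial(E_1)=\partial(\partial^{-1}(E_0))=E_0\cap\im\partial=E_0\cap\ker\pi=\ker(\pi|_{E_0})$ gives exactness in the middle; and $\pi|_{E_0}$ is onto because every $a\in A$ lifts to a $\Sq$-combination $d\in D_0$ with $\theta_D(d)=0=\theta_E(d)$, so $d\in E_0$. The Peiffer identity $(\partial x)\cdot y=x\cdot(\partial y)$ for $x,y\in E_1$ then follows by the following reduction: $\partial x$ and $\partial y$ lie in $\ker\pi$, and $\ker\pi$ multiplies to zero under $\ast$ (as $R_D^2=0$ by Lemma \ref{d0props}, while the twisted terms $\psi,\kappa$ and the $X$-, $\mu_0X$-summands are all killed by $\pi$), so both sides are $\partial$-cycles, hence in $A$, where the required equality is part of the crossed-algebra structure of $\widehat{E_\bullet}$.

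Finally I would check that $\nproj\colon e\mapsto e_D$ is a morphism of crossed algebras: it is the identity on the common $A$ at either end, it commutes with $\partial$ since $\partial e=\partial e_D+e_X$ while $\nproj$ kills the $X$-component, and it is multiplicative because the correction terms $\psi(a)\psi(b)\mu_0+X_{-1}\psi(a)\kappa(b)$ of \myref{muldef} lie in $X+\mu_0X$, whence $\nproj(a\ast b)=ab=\nproj(a)\nproj(b)$; the same remark yields compatibility with the two module actions. The main obstacle is none of this bookkeeping but the middle step, namely establishing that $\widehat{E_\bullet}$, equipped with the twisted $\ast$-action of $\widehat{E_0}$ on $\widehat{E_1}$, genuinely satisfies bilinearity of $\partial$ together with the bimodule and Peiffer axioms. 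This is the one computational point, of the same associativity flavour as the verification that $\ast$ is associative and that $\widehat{\theta_D}$, $\theta_E$ are derivations (Lemmas \ref{psitheta} and \ref{e0defprep}); once it is in hand, the restriction to the sub-object $E_\bullet$ and the verification of $\nproj$ are automatic, and the defining condition $\theta_D\circ\nproj=\theta_E$ enters only to keep $E_0$ closed under $\ast$ and to prepare the secondary comultiplication of the following sections.
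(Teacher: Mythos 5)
Your skeleton is the same as the paper's, whose entire proof is the single word ``Clear'': all structure is restricted from $\widehat{E_\bullet}$, and your verifications of exactness, of the surjectivity of $\pi|_{E_0}$ (lift $a$ to a $\Sq$-combination, which satisfies $\theta_D=0=\theta_E$), and of the fact that $\nproj$ kills precisely the correction terms of \myref{muldef} are all correct. Where you go astray is the step you single out as ``the main obstacle'': there is no twisted action to construct and no associativity-flavoured computation left to do. The $\widehat{E_0}$-bimodule structure on $\widehat{E_1}$ simply factors through $\pi:\widehat{E_0}\rightarrow A$ --- the paper states this explicitly in the folding-product subsection (``both $D_1$ and $E_1$ are $A$-bimodules to begin with'') --- and bilinearity of $\partial$ costs nothing: in \myref{muldef} the maps $\psi$ and $\kappa$ are evaluated on images under $\pi$, and $\pi\circ\partial=0$, so for $x\in\widehat{E_1}$ all correction terms in $c\ast\partial x$ and $\partial x\ast c$ vanish; one is left with $c\ast\partial x=c\,\partial x_D+\pi(c)\,x_X$, which matches $\partial(c\cdot x)$ by the bilinearity already proved for $D_\bullet$ together with the definition $\partial e=\partial e_D+e_X$ (the ambiguity in lifting $\pi(c)$ is harmless since $R_D^2=0$ and $2R_D=0$ by Lemma \ref{d0props}).

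For the same reason your Peiffer paragraph should be replaced: since the actions go through $\pi$ and $\pi\partial=0$, both $(\partial x)\cdot y$ and $x\cdot(\partial y)$ are literally zero, and the identity holds trivially. As written, your reduction is circular --- you bring both sides into $A$ and then invoke ``the crossed-algebra structure of $\widehat{E_\bullet}$'', which is exactly what is being established at that point. With these two repairs (drop the twisted action in favour of the $A$-action through $\pi$, and use the one-line vanishing argument for Peiffer) your proof is complete and agrees with the paper's intent; the genuinely computational content of the section sits in the earlier lemmas on $\ast$, $\widehat{\theta_D}$ and $\theta_E$, which you correctly cite, not in this corollary.
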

      \begin{proof}
        Clear.
      \end{proof}

      \begin{figure}
        \begin{RaggedRight}
          \newcommand{\xpl}{+}
          \begin{tabular}{c|p{1.8cm}|p{3.7cm}|p{4.7cm}}
            $[n,m]$ & Definition & $D_0$ & $X+\mu_0X$ \\
            \hline
            %
            %\\
            \hline
            $ [1,1] $ & $  1\cdot 1  $
            &  $ 2\,\Sq(2)  $ + $  Y_{-1,0} $ & $  X_{-1,0}  $ + $  \mu_0X_{0,0}  $ 
            \\
            \hline
            $ [1,2] $ & $  1\cdot 2  +  3  $
            &  $  Y_{-1,0} \Sq(1) $ & $  X_{-1,0} \Sq(1)  $ + $  \mu_0X_{0,0} \Sq(1)  $ + $ X_{0,0}  $ 
            \\
            \hline
            $ [2,2] $ & $  2\cdot 2  +  3\cdot 1  $
            &  $ 2\,\Sq(1,1) \xpl 2\,\Sq(4)  $ + $  Y_{-1,0} \Sq(2) $ & $  X_{-1,0} \Sq(2)  $ + $  X_{0,0} \Sq(1)  $ + $  \mu_0X_{0,0} \Sq(2)  $ + $  \mu_0X_{0,1}  $ 
            \\
            $ [1,3] $ & $  1\cdot 3  $
            &  $  Y_{-1,0} \Sq(2) $ & $  X_{-1,0} \Sq(2)  $ + $  \mu_0X_{0,0} \Sq(2)  $ + $ X_{0,0} \Sq(1)  $ 
            \\
            \hline
            $ [3,2] $ & $  3\cdot 2  $
            &  $ 2\,\Sq(2,1) \xpl 2\,\Sq(5)  $ + $  Y_{-1,0} ( \Sq(0,1) \xpl \Sq(3) ) $ & $  X_{-1,0} ( \Sq(0,1) \xpl \Sq(3) )  $ + $  X_{0,0} \Sq(2)  $ + $  X_{0,1}  $ + $  \mu_0X_{0,0} ( \Sq(0,1) \xpl \Sq(3) )  $ + $  \mu_0X_{0,1} \Sq(1)  $ 
            \\
            $ [2,3] $ & $  2\cdot 3  +  4\cdot 1  +  5  $
            &  $ 2\,\Sq(2,1)  $ & $  X_{0,1}  $ + $  \mu_0X_{0,1} \Sq(1) $ 
            \\
            $ [1,4] $ & $  1\cdot 4  +  5  $
            &  $ 2\,\Sq(5)  $ + $  Y_{-1,0} \Sq(3) $ & $  X_{-1,0} \Sq(3)  $ + $  X_{0,0} \Sq(2)  $ + $  \mu_0X_{0,0} \Sq(3)  $ 
            \\
            \hline
            $ [3,3] $ & $  3\cdot 3  +  5\cdot 1  $
            &  $ 2\,\Sq(6)  $ + $  Y_{-1,0} ( \Sq(1,1) \xpl \Sq(4) ) $ & $  X_{-1,0} ( \Sq(1,1) \xpl \Sq(4) )  $ + $  X_{0,0} ( \Sq(0,1) \xpl \Sq(3) )  $ + $  \mu_0X_{0,0} ( \Sq(1,1) \xpl \Sq(4) )  $ 
            \\
            $ [2,4] $ & $  2\cdot 4  +  5\cdot 1  +  6  $
            &  $ 2\,\Sq(3,1) \xpl 2\,\Sq(6)  $ + $  Y_{-1,0} \Sq(4) $ & $  X_{-1,0} \Sq(4)  $ + $  X_{0,0} \Sq(3)  $ + $  X_{0,1} \Sq(1)  $ + $  \mu_0X_{0,0} \Sq(4)  $ + $  \mu_0X_{0,1} \Sq(2)  $ 
            \\
            $ [1,5] $ & $  1\cdot 5  $
            &  $ 2\,\Sq(6)  $ + $  Y_{-1,0} \Sq(4) $ & $  X_{-1,0} \Sq(4)  $ + $  X_{0,0} \Sq(3)  $ + $  \mu_0X_{0,0} \Sq(4)  $ 
            \\
            \hline
            $ [4,3] $ & $  4\cdot 3  +  5\cdot 2  $
            &  $ 2\,\Sq(1,2) \xpl 2\,\Sq(4,1)  $ + $  Y_{-1,0} ( \Sq(2,1) \xpl \Sq(5) ) $ & $  X_{-1,0} ( \Sq(2,1) \xpl \Sq(5) )  $ + $  X_{0,0} ( \Sq(1,1) \xpl \Sq(4) )  $ + $  \mu_0X_{0,0} ( \Sq(2,1) \xpl \Sq(5) )  $ + $  \mu_0X_{0,1} \Sq(0,1)  $ 
            \\
            $ [3,4] $ & $  3\cdot 4  +  7  $
            &  $  Y_{-1,0} \Sq(2,1) $ & $  X_{-1,0} \Sq(2,1)  $ + $  X_{0,1} \Sq(2)  $ + $  \mu_0X_{0,0} \Sq(2,1)  $ + $  \mu_0X_{0,1} \Sq(3)  $ + $ X_{0,0} \Sq(1,1)  $ 
            \\
            $ [2,5] $ & $  2\cdot 5  +  6\cdot 1  $
            &  $ 2\,\Sq(4,1)  $ & $  X_{0,1} \Sq(2)  $ + $  \mu_0X_{0,1} \Sq(3) $ 
            \\
            $ [1,6] $ & $  1\cdot 6  +  7  $
            &  $  Y_{-1,0} \Sq(5) $ & $  X_{-1,0} \Sq(5)  $ + $  \mu_0X_{0,0} \Sq(5)  $ + $ X_{0,0} \Sq(4)  $ 
          \end{tabular}
        \end{RaggedRight}
        \caption{List of Adem relations in $E_0$.}
        \label{ademtab}
      \end{figure}
        
    \end{subsection}
    \begin{subsection}{Represented Functors}
      \begin{lemma}
        For $f(x)\in G(R)$ let $\tau_f(x)$ and $\theta_f(x)$ be defined by the
        decomposition
        \begin{align}
          \label{thetadef}
          f(x) &= x + \tau_f(x^2) + x\theta_f(x^2)
        \end{align}
        and write $\overline f(x)=f(x)-x$.
        Then
        \begin{align}
          \label{psicompform}
          \overline {fg}(x) &= \overline{f}(g(x)) + \overline{g}(x), \\
          \label{thetacompform}
          \theta_{fg}(x) &= \theta_f(g(x)) +\theta_g(x) + \xi_1^f\overline{g}(x),
        \end{align}
        where $\xi_1^f = \tau_f'(0)$ is the coefficient of $x^2$ in $f(x)$.
      \end{lemma}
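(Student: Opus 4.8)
The first identity \myref{psicompform} is purely formal. Since the product in $G(R)$ is composition, $(fg)(x)=f(g(x))$, and writing $f=\id+\overline f$ we get
\[
  \overline{fg}(x)=f(g(x))-x=\bigl(g(x)+\overline f(g(x))\bigr)-x=\overline g(x)+\overline f(g(x)),
\]
which is exactly \myref{psicompform}. (This is the functorial shadow of the fact, used in Lemma~\ref{psitheta}, that $\psi$ is a derivation.)

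For \myref{thetacompform} the plan is to extract the odd-degree part of \myref{psicompform}. By the decomposition \myref{thetadef}, for any $h\in G(R)$ the odd part of $\overline h(x)$ equals $x\theta_h(x^2)$, so it suffices to compute the odd part of $\overline g(x)+\overline f(g(x))$ and divide by $x$. The summand $\overline g(x)$ contributes $x\theta_g(x^2)$ directly. For $\overline f(g(x))$ I would substitute $\overline f(y)=\tau_f(y^2)+y\theta_f(y^2)$ at $y=g(x)$, obtaining $\overline f(g(x))=\tau_f(g(x)^2)+g(x)\theta_f(g(x)^2)$, and then treat the two terms separately. Note that the odd part of $g(x)$ itself is $x(1+\theta_g(x^2))$, since $\tau_g(x^2)$ is even.

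The crux is the behaviour of $g(x)^2$ under the relations of Lemma~\ref{d0modular}, i.e.\ $J^2=0$ for $J=(2,t_{k,l})$, so that $4=0$, $2t_{k,l}=0$ and $t_{k,l}t_{k',l'}=0$. Writing $g=P+Q$ with $P$ the $\xi$-part $\sum_k t_kx^{2^k}$ and $Q=\sum_{k<l}t_{k,l}x^{2^k+2^l}$ the part whose coefficients lie in $J$, these relations force $Q^2=0$ and $2PQ=0$, hence $g(x)^2=P(x)^2$. Thus $g(x)^2=E+O$ with $E$ even and odd part $O=2x\overline g(x)\in 2R[[x]]$. Because every coefficient of $\theta_f$ lies in $J$ and $O\in 2R[[x]]$, the relation $J^2=0$ gives $\theta_f(g(x)^2)=\theta_f(E)$, which is even; so the odd part of $g(x)\theta_f(g(x)^2)$ is $x(1+\theta_g(x^2))\theta_f(E)$. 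For the other term, $\tau_f(E+O)=\tau_f(E)+\tau_f'(E)\,O$ since $O^2\in 4R[[x]]=0$, and of $\tau_f'(E)$ only the linear coefficient $\xi_1^f=\tau_f'(0)$ survives multiplication by $O$ (every higher coefficient carries a factor $2^{k-1}$ with $k\ge 2$, and every $t_{k,l}^f$ lies in $J$, so both are annihilated by $O\in 2R[[x]]$). Hence the odd part of $\tau_f(g(x)^2)$ is $\xi_1^f\,O$. Collecting the three contributions, dividing by $x$, and discarding the cross term $\theta_g(x^2)\theta_f(E)$ (zero since both factors have coefficients in $J$), one arrives at
\[
  \theta_{fg}(x^2)=\theta_g(x^2)+\theta_f(g(x)^2)+\xi_1^f\,\tfrac{O}{x},
\]
which is the content of \myref{thetacompform}.

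The main obstacle is the precise $J^2=0$ bookkeeping in the last step: one must isolate the single surviving term $\xi_1^f$ of $\tau_f$, translate $\theta_f(g(x)^2)$ back into $\theta_f(g(x))$ through the even/odd (square) normalisation of the theta-series, and match the factor coming from $O=2x\overline g(x)$ with the stated correction $\xi_1^f\overline g(x)$. This correction is the group-level counterpart of the term $\psi(d)\kappa(e)$ appearing in Lemma~\ref{psitheta} and Lemma~\ref{e0defprep} (consistent with $\xi_1^f$ being dual to $\kappa=\cont(\xi_1,-)$), which is the structural reason to expect \myref{thetacompform} to take exactly this shape.
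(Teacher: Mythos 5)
Your proof of \myref{psicompform} is correct, and your computation for \myref{thetacompform} is careful and essentially right --- the paper itself offers nothing beyond ``this is a straightforward computation'', and your $J^2=0$ bookkeeping checks out in every step: $g(x)^2=E+O$ with $O=2x\overline g(x)$, $\theta_f(g(x)^2)=\theta_f(E)$ since the coefficients of $\theta_f$ lie in $J$ and kill $O\in 2R[[x]]$, $\tau_f(E+O)=\tau_f(E)+\xi_1^f O$, and the discarded cross term $\theta_g(x^2)\theta_f(E)$ indeed vanishes. The genuine gap is the very last step, which you defer as ``the main obstacle'': the identity you actually derived is $\theta_{fg}(x^2)=\theta_f(g(x)^2)+\theta_g(x^2)+2\xi_1^f\overline g(x)$, and this is \emph{not} the printed \myref{thetacompform} under any change of variable --- the correction term carries an irremovable factor $2$. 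Test with $f(x)=x+ax^2$, $g(x)=x+bx^2$ over $R=\ZZ/4[a,b]$: then $(fg)(x)=x+(a+b)x^2+2abx^3+ab^2x^4$, so the odd part yields the coefficient $2ab$, whereas $\xi_1^f\overline g(x)=abx^2$ supplies only $ab$. Declaring your final display to be ``the content of \myref{thetacompform}'' therefore papers over a discrepancy that no routine translation will close.

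What reconciles the two is a normalisation the paper uses silently. The argument mismatch you worried about is actually the easy part: the paper's own convention in the following lemma is $\theta_f(x)=\sum_k 2\xi_{0,k}x^{2^k}$, i.e.\ $\theta_f(x)$ is the odd part of $\overline f(x)$ divided by $x$, so that \myref{thetadef} should be read as $f(x)=x+\tau_f(x^2)+x\theta_f(x)$; with that convention one has $t^f_{0,l}\,g(x)^{2^l}=t^f_{0,l}E^{2^{l-1}}$ (again because $t^f_{0,l}\in J$ annihilates $O$ and $O^2=0$), so your middle term is \emph{literally} $\theta_f(g(x))$ and no even/odd renormalisation is needed. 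The factor $2$, however, disappears only after halving: every coefficient in sight is of the form $2\xi$ (namely $t_{0,l}=2\xi_{0,l}$, and the true correction has coefficients $2\xi_1^f\xi_k^g$), and \myref{thetacompform} is the resulting identity for the halved coefficients $\xi_{0,l}$ read mod $2$ --- equivalently, it is the power-series shadow of the derivation identity of Lemma \ref{psitheta} for $\theta_D:D_0\rightarrow V$, where the pairing convention $\langle Y_{-1,k},2\xi_{0,k}\rangle=2$ absorbs the $2$, consistent with the constraint $2v(x)=0$ in the target $V_!$. Your closing structural remark identifying $\xi_1^f$ with $\kappa=\cont(\xi_1,-)$ is exactly right; the missing content is this factor-of-two accounting, which is precisely where your proof stops.
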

      \begin{proof}
        This is a straightforward computation.
      \end{proof}
      Recall that $V$ represents the functor
      $$V_!(R) \cong G(R)\times\Big\{v(x)=\sum_{k\ge 1} v_kx^{2^k}
      \,\vert\, v(x)^2=0,\, 2v(x)=0\Big\}.$$
      This extends to $M=V+\mu_0 V$ as
      $$M_!(R) \cong G(R)\times 
      \Big\{v(x)=v_0(x)+\mu_0v_1(x)\,\vert\,\text{$v_0$, $v_1$ as in $V_!(R)$}\Big\}$$
      where 
      $$(f,v_0+\mu_0v_1)\circ (g,w_0+\mu_0w_1) 
      = (fg, v_0g+w_0+\xi_1^fw_1 + \mu_0(v_1g+w_1)).$$
      We can use this to give an explanation of $\psi$ and $\theta_D$.
      \begin{lemma}
        Let $\widehat{\theta_D}$ be the derivation 
        $D_0\rightarrow V+\mu_0V = M$ from Lemma \ref{psitheta}
        and let $\widetilde{\theta_D}:\Sym_{{D_0}\us}(M\us)\rightarrow {D_0}\us$
        be the multiplicative extension with 
        $\widetilde{\theta_D}\vert_{M\us} = \widehat{\theta_D}\us$.
        Then  $\widetilde{\theta_D}$ represents
        the transformation $G(R)\rightarrow M_!(R)$
        with $f\mapsto (f,\theta_f(x)+ \mu_0\overline{f}(x))$.
      \end{lemma}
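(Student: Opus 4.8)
The plan is to invoke the Yoneda lemma. Since $\widehat{\theta_D}$ is a derivation (Lemma \ref{psitheta}), its multiplicative extension $\widetilde{\theta_D}$ sends the degree-one generators into the square-zero part of $\Sym_{D_0^*}(M^*)$, so precomposition with $\widetilde{\theta_D}$ turns each $R$-point $f\in G(R)=\Hom_{\catAlg{\ZZ/4}}({D_0}\us,R)$ into a well-defined point of $M_!(R)$. As $\widetilde{\theta_D}$ is the identity on ${D_0}\us$, the $G$-component of this point is $f$ itself, and the whole assertion reduces to identifying the remaining $M$-component with $\theta_f(x)+\mu_0\overline f(x)$.

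To pin down that component I would dualize and evaluate on the universal point, i.e.\ take $R={D_0}\us$ and the tautological series $f(x)=\sum_{k}\xi_k x^{2^k}+\sum_{k<l}2\xi_{k,l}x^{2^k+2^l}$. Splitting $f$ according to \myref{thetadef}, $f(x)=x+\tau_f(x^2)+x\theta_f(x^2)$, the odd part isolates $\theta_f$, whose coefficient of $x^{2^k}$ is exactly $2\xi_{0,k+1}$; by Lemma \ref{d0props} this is the basis vector dual to $Y_{-1,k}$, so the $V$-part of the represented map coincides with the extraction map $\theta_D$. What is left, $\overline f=f-x$, is precisely what the derivation $\psi(d)=X_{-1}d-dX_{-1}$ reads off into the $\mu_0V$-slot. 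Hence the linearization is $\theta_D+\psi\mu_0=\widehat{\theta_D}$ and the component equals $\theta_f+\mu_0\overline f$. To make the two readings line up I would use the coproduct $\Delta\xi_{0,n}=\xi_{0,n}\otimes1+\sum_{k\ge0}\xi_{n-k}^{2^k}\otimes\xi_{0,k}+\xi_{n-1}\otimes\xi_1$ already exploited in Lemma \ref{psitheta}.

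As an independent check, and to expose the structure, I would substitute $v_0=\theta_f$, $v_1=\overline f$, $w_0=\theta_g$, $w_1=\overline g$ into the group law
$$(f,v_0+\mu_0 v_1)\circ(g,w_0+\mu_0 w_1)=(fg,\,v_0 g+w_0+\xi_1^f w_1+\mu_0(v_1 g+w_1)).$$
By \myref{psicompform} the $\mu_0$-component becomes $\overline{fg}$ and by \myref{thetacompform} the $V$-component becomes $\theta_{fg}$, so $f\mapsto(f,\theta_f+\mu_0\overline f)$ respects $\circ$. This is the functorial reincarnation of the derivation identity of Lemma \ref{psitheta}, with the twist $\xi_1^f w_1$ playing the role of the correction term $\psi(d)\kappa(e)$.

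The step I expect to require the most care is the coefficient bookkeeping of the second paragraph: one must track how the right $A$-module structures on $V$ and on $\mu_0V$ match the coefficientwise reading of $\theta_f$ and $\overline f$, and in particular confirm that the non-$2$-torsion coefficients of $\overline f$ are legitimate in the $\mu_0V$-slot because $2\mu_0=0$, while the mixed coefficients $2\xi_{k,l}$ are absorbed correctly once the relation $a\mu_0=\mu_0 a+\kappa(a)$ (equivalently the $\xi_1^f$-twist) is taken into account. This is a finite if slightly tedious comparison of coefficients, which is why the formal write-up can be kept short.
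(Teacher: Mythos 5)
Your proof is correct and its core (the second paragraph) is exactly the paper's argument: evaluate on the tautological series over $R={D_0}\us$, read off $\theta_f(x)=\sum_{k\ge 0}2\xi_{0,k+1}x^{2^k}$ and $\overline{f}(x)$ in the decomposition \myref{thetadef}, and identify the resulting map $M\us\rightarrow {D_0}\us$ with $\widehat{\theta_D}\us$. Your additional Yoneda/well-definedness framing and the composition check via \myref{psicompform} and \myref{thetacompform} are sound but not a different route --- the paper makes the latter observation immediately after the lemma, noting that the multiplicativity of $\widehat{\theta_D}$ is just a reformulation of those two formulas.
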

      \begin{proof} 
        For an $f(x)$ of the form $\sum_{k\ge 0}x^{2^k}+\sum_{0\le k<l}2\xi_{k,l}x^{2^k+2^l}$
        one has
        \begin{align*}
          \tau_f(x) &= 
          \sum_{k\ge 1} \xi_kx^{2^{k-1}} + \sum_{1\le k<l}2\xi_{k,l}x^{2^{k-1}+2^{l-1}}, \\
          \theta_f(x) &= \sum_{k\ge 0} 2\xi_{0,k}x^{2^k}.
        \end{align*}
        The map $f\mapsto (f,\theta_f(x)+ \mu_0\overline{f}(x))$
        therefore corresponds to the $M\us\rightarrow {D_0}\us$
        with
        $v_k\mapsto 2\xi_{0,k}$ and $\mu_0\os v_k\mapsto \xi_{k}$.
        But this is just $\widehat{\theta_D}\us$.
      \end{proof}
      The multiplicative properties of $\psi$ and $\theta_D$ 
      that we established in Lemma \ref{psitheta}
      are therefore just a reformulation of \myref{psicompform} 
      and \myref{thetacompform}.

      We can now translate the definition of $E_0$ into
      the functorial context.
      \begin{lemma}
        The ring $\widehat{E_0}$
        represents pairs $(f_1(x),f_2(x,y))$ with $f_1(x)\in G(R)$
        and $f_2(x,y)=f_2^{\scriptscriptstyle{(0)}}(x,y)
        +\mu_0f_2^{\scriptscriptstyle{(1)}}(x,y)$ with 
        $(f_1,f_2^{\scriptscriptstyle{(j)}})\in U_!(R)$.
        The multiplication $\ast$ corresponds to the composition
        \begin{align*}
          \left(f\circ g\right)_2(x,y) &=
          f_2\left(g_1(x),g_1(y)\right) 
          + \xi_1^f\cdot g_2^{\scriptscriptstyle{(1)}}(x,y) + g_2(x,y) \\
          &\quad + \mu_0^f \overline{g}(x)\cdot \overline{f}(g(y))
          + \xi_1^f x \cdot \overline{g}(y).
        \end{align*}
        The subset of those $(f_1,f_2)$ with 
        $$f_2(x,y)=x\cdot\theta_{f_1}(y^2) + f_2^{\scriptscriptstyle{(0)}}(x^2,y^2) 
        + \mu_0f_2^{\scriptscriptstyle{(1)}}(x^2,y^2)$$
        is closed under $\ast$ and represented by $E_0$.
      \end{lemma}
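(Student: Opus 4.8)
The plan is to follow the pattern of the preceding represented-functor lemmas: exhibit the commutative $\ZZ/4$-algebra dual to $\widehat{E_0}$, read off its functor of points, and then dualize the structure maps to recover the composition law. For the representability claim I would start from the module splitting $\widehat{E_0}=D_0\oplus X'\oplus\mu_0X'$. By Lemma \ref{d0modular} the factor dual to ${D_0}\us$ contributes $f_1\in G(R)$, while $X'$ is by definition a copy of $U'$, dual to the degree-one part of ${D_0}\us[u_{k,l}]/J^2$. The two summands $X'$ and $\mu_0X'$ therefore contribute two independent square-zero families of generators, say $u_{k,l}^{(0)}$ and $u_{k,l}^{(1)}$; under the index dictionary in which $X_{k,l}$ is dual to the coefficient of $x^{2^{k+1}}y^{2^{l+1}}$, a point of $\widehat{E_0}$ is exactly a pair $(f_1,f_2)$ with $f_2=f_2^{(0)}+\mu_0 f_2^{(1)}$ and each $(f_1,f_2^{(j)})\in U_!(R)$. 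This part is pure bookkeeping.

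The substance is the composition law, and here I would dualize \myref{muldef} together with the extension rules $d\ast m=\pi(d)m$, $m\ast d=m\pi(d)$ and $mm'=0$, matching the five terms of the stated formula against the contributions of the multiplication one by one. The $D_0$-factor $ab$ dualizes to the composition group law of $G$, giving $f_1\circ g_1$. A point with nonzero $f_2$ interacts only through the mixed products: the $f$-factor's $X$-part, acted on by $g_1$, substitutes $g_1$ into both slots to give $f_2(g_1(x),g_1(y))$, while the $g$-factor's $X$-part survives as $g_2(x,y)$, with the twist $a\mu_0=\mu_0a+\kappa(a)$ coupling its $\mu_0$-part into the untwisted part through the defect $\kappa$ as $\xi_1^f\,g_2^{(1)}(x,y)$ — exactly the mixing already recorded in the composition law for $M_!$ recalled above. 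The two remaining correction terms come from the pure $D_0\times D_0$ part of \myref{muldef}: the summand $\psi(a)\psi(b)\mu_0$ deposits $\overline g(x)\cdot\overline f(g(y))$ into the $\mu_0$-component via $\psi(a)=X_{-1}a-aX_{-1}$ and \myref{psicompform}, and $X_{-1}\psi(a)\kappa(b)$ yields $\xi_1^f\,x\cdot\overline g(y)$, with the coefficient $\xi_1^f=\tau_f'(0)$ issuing from $\kappa(b)=\cont(\xi_1,b)$ precisely as in \myref{thetacompform}. I expect this slot-and-twist accounting — tracking in which variable each derivation acts and where the $\mu_0$-twist leaves a factor $\xi_1^f$ — to be the main obstacle; the rest is formal.

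For the final assertion about $E_0$ I would translate its defining condition $\theta_D\circ\nproj=\theta_E$ into the normal form. Under the same index dictionary the generators surviving in $\widetilde{E_0}$ are $X_{k,l}$ and $\mu_0X_{k,l}$ with $k,l\ge0$, together with $X_{-1,k}$ ($k\ge0$), so a point has $f_2(x,y)=x\cdot\ell(y^2)+f_2^{(0)}(x^2,y^2)+\mu_0 f_2^{(1)}(x^2,y^2)$ for a linear form $\ell$ collecting the $X_{-1,k}$-coefficients. Since $\theta_E$ reads off exactly these coefficients and the preceding lemma identifies $\widehat{\theta_D}$ with the transformation $f\mapsto(f,\theta_f(x)+\mu_0\overline f(x))$, the condition $\theta_D\circ\nproj=\theta_E$ forces $\ell=\theta_{f_1}$, which is the claimed normal form. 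Closure under $\ast$ is then the functorial shadow of the fact that $\theta_E$ and $\theta_D\circ\nproj$ satisfy the identical derivation identity, namely Lemma \ref{e0defprep} against Lemma \ref{psitheta}; concretely I would check directly from the composition law of the previous paragraph that if both inputs are in normal form then so is $(f\circ g)_2$, which reduces once more to \myref{thetacompform}.
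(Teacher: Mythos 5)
Your proposal is correct and is essentially the verification the paper itself has in mind: its entire proof reads ``Again this is straightforward,'' meaning exactly the direct dualization you carry out --- the module splitting $\widehat{E_0}=D_0+X'+\mu_0X'$ against the $U_!$ and $M_!$ dictionaries, term-by-term dualization of \eqref{muldef} with the $\mu_0$-twist producing the $\xi_1^f$ factors, and translation of $\theta_D\circ\nproj=\theta_E$ into the normal form $\ell=\theta_{f_1}$. Your closure argument via Lemmas \ref{psitheta} and \ref{e0defprep} and the identity \eqref{thetacompform} also matches the paper's own gloss in Remark \ref{emultdiscuss}, where the correction terms are said to be crafted precisely to preserve the conditions $f_2\equiv 0 \bmod y^2$ and $f_2\equiv x\theta_{f_1}(y^2)\bmod x^2$.
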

      \begin{proof}
        Again this is straightforward.
      \end{proof}

      \begin{remark}\label{emultdiscuss}
        Rephrasing the previous discussion one could say that
        in $E_0$ we are studying certain pairs 
        $f = (f_1,f_2)$ under the transformation rule
        \begin{align*}
          (fg)_1 &= f_1g_1,\quad
          (fg)_2(x,y) = (fg)_2^{\rm basic}(x,y) + \text{correction terms}
          \intertext{where}
          (fg)_2^{\rm basic}(x,y) &= f_2\left(g_1(x),g_1(y)\right) 
          + \xi_1^f\cdot g_2^{\scriptscriptstyle{(1)}}(x,y) 
          + g_2(x,y).
        \end{align*}
        Here the correction terms are specifically crafted
        to preserve the conditions
        \begin{align*}
          %\label{ep1}
          f_2(x,y) &\equiv 0 \mod y^2, \\
          %\label{ep2}
          f_2(x,y) &\equiv x\theta_{f_1}(y^2) \mod x^2
        \end{align*}
        that define $E_0$.
        %(Note that in the terminology of Lemma \ref{e0defprep}
        %the first condition alone defines $\widetilde{E_0}$.)
        To us this suggests that the basic object of study should
        be the composition $(fg)_2^{\rm basic}$ and the subspace $E_0$,
        both of which have a reasonably elementary definition.
        The precise structure of the correction terms 
        might then count as an artifact of the retraction from $\widehat{E_0}$
        to $E_0$.
      \end{remark}

    \end{subsection}
  \end{section}

  %%%%%%%%%%%%%%%%%%%%%%%%%%%%%%%%%%%%%%%%%%%%%%%%%
  %%%%%%%%%%%%%%%%%%%%%%%%%%%%%%%%%%%%%%%%%%%%%%%%%
  %%%%%%%%%%%%%%%%%%%%%%%%%%%%%%%%%%%%%%%%%%%%%%%%%
  %%%%%%%%%%%%%%%%%%%%%%%%%%%%%%%%%%%%%%%%%%%%%%%%%
  %%%%%%%%%%%%%%%%%%%%%%%%%%%%%%%%%%%%%%%%%%%%%%%%%

\newcommand{\aug}{\epsilon}

  \begin{section}{The Hopf structure on \texorpdfstring{$E_\bullet$}{E}}\label{compsect}
    The secondary Steenrod algebra 
    %$B_\bullet$ 
    comes equipped
    with a diagonal $B_\bullet\rightarrow B_\bullet\fotimes B_\bullet$
    that extends the usual coproducts on $A$ and $B_0$. 
    This extra structure is essential for the characterization of $B_\bullet$
    in the Uniqueness Theorem \cite[15.3.13]{baues}.    
    In this section we are going to exhibit a similar structure on $E_\bullet$,
    which is a key step in our proof that $B_\bullet\sim E_\bullet$.

    \begin{subsection}{\texorpdfstring{$E_0$}{E0} as Hopf algebra}
      \begin{lemma}
        There is a unique multiplicative $\Delta_0:E_0\rightarrow E_0\otimes E_0$
        with $$\Delta_0\left(\Sq(R)\right) = \sum_{E+F=R} \Sq(E)\otimes \Sq(F)$$
        and $\Delta_0(Z) = Z\otimes 1 + 1\otimes Z$
        for $Z\in\{ Y_{k,l}, X_{k,l}, \mu_0X_{k,l} \}$.
      \end{lemma}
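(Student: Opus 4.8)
The plan is to obtain $\Delta_0$ as the comultiplication dual to the polynomial multiplication on the coordinate ring of the composition-monoid described in the previous subsection, so that coassociativity and multiplicativity are inherited rather than checked by hand. Uniqueness is immediate: by \myref{d0decomp} one has $D_0=\ZZ/4\{\Sq(R)\}\oplus\sum_{k<l}Y_{k,l}A$, and the full right $A$-module structure on the $Y_{k,l}$, $X_{k,l}$ and $\mu_0X_{k,l}$ is recovered from the bare generators through $\ast$-multiplication together with the commutation rules \myref{yklcomm} and \myref{ucomm}. Hence the listed elements generate $E_0$ as an algebra under $\ast$, and a multiplicative map is pinned down by its values on them.

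For existence I would work functorially. The algebra $E_0$ is the dual of the ring representing the subfunctor $\mathcal E\subset\{(f_1,f_2)\}$ that was shown to be closed under composition; since composition is a monoid law, that coordinate ring is a bialgebra in $\catAlg{\ZZ/4}$, with comultiplication dual to $\ast$ and with its polynomial multiplication. Dualizing the polynomial multiplication produces a coassociative, counital comultiplication $\Delta_0:E_0\to E_0\otimes E_0$, and it is multiplicative for $\ast$ precisely because the diagonal $\mathcal E\to\mathcal E\times\mathcal E$ is a homomorphism for composition. The values on generators are then read off by pairing against monomials: one finds $\Delta_0\Sq(R)=\sum_{E+F=R}\Sq(E)\otimes\Sq(F)$ as for the classical dual Steenrod algebra, the potential $Y$-corrections dropping out because $\langle\Sq(R),2\xi_{m,n}\xi^S\rangle=0$ by Lemma \ref{d0props}; and each of $Y_{k,l}$, $X_{k,l}$, $\mu_0X_{k,l}$ is primitive, its dual being an indecomposable generator of the coordinate ring (the defining ideal satisfies $J^2=0$).

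I expect the principal labor to be bookkeeping, concentrated in two spots. First, over $\ZZ/4$ the formation of duals does not commute with $\otimes$, so the target of $\Delta_0$ must be identified carefully; it is exactly the square-zero and $2$-torsion relations among the $X$'s that reconcile the primitive formulas with the vanishing products $X_{k,l}\ast X_{k',l'}=0$. Second, one must check that $\Delta_0$ lands in $E_0\otimes E_0$ and not merely in $\widehat{E_0}\otimes\widehat{E_0}$, i.e.\ that it preserves the defining condition $\theta_D\circ\nproj=\theta_E$; this follows from the derivation property of $\widehat{\theta_D}$ recorded in Lemmas \ref{psitheta} and \ref{e0defprep}. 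Should one prefer an entirely computational route, the only non-formal multiplications are the $\Sq(R)\ast\Sq(S)$, whose correction terms $\psi(a)\psi(b)\mu_0$ and $X_{-1}\psi(a)\kappa(b)$ from \myref{muldef} are controlled by the composition formulas \myref{psicompform} and \myref{thetacompform} on top of the Cartan coproduct already present on $D_0$.
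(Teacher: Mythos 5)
Your uniqueness argument is essentially the paper's (which dismisses uniqueness in one word), with one small caveat: $Y_{-1,k}$ and $X_{-1,k}$ are not individually elements of $E_0$ --- only the combinations $Z_k=X_{-1,k}+Y_{-1,k}$ survive the condition $\theta_D\circ\nproj=\theta_E$ --- so the prescribed values must be read on $\widehat{E_0}$ (or reformulated in terms of the $Z_k$) before the spanning argument applies.

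The existence half has a genuine gap, and it sits exactly at the point you dismiss as bookkeeping. Over $\ZZ/4$ the comparison map $E_0\otimes E_0\rightarrow\left({E_0}\us\otimes{E_0}\us\right)\us$ is not just short of an isomorphism; it has a huge kernel, because every functional of the form $Y_{k,l}a$, $X_{k,l}a$, $\mu_0X_{k,l}a$ takes values in $2\ZZ/4$ and $2\cdot 2=0$ (already for $f,g\in\Hom(\ZZ/2,\ZZ/4)$ the image of $f\otimes g$ in $\Hom(\ZZ/2\otimes\ZZ/2,\ZZ/4)$ vanishes). So the bialgebra structure on the representing ring only yields multiplicativity of $\Delta_0$ \emph{after} composing with this non-injective map: it neither singles out a lift into $E_0\otimes E_0$ nor transfers multiplicativity to any chosen lift. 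Concretely, your mechanism says nothing about products of two relation-module elements: $X_{k,l}\ast X_{m,n}=0$ in $\widehat{E_0}$, while multiplying the primitives produces the symmetric cross term $X_{k,l}\otimes X_{m,n}+X_{m,n}\otimes X_{k,l}$, which dies in the completed dual but is nonzero in the plain tensor square --- precisely the kind of discrepancy your ``inherited rather than checked by hand'' plan is blind to, and which therefore must be confronted at the level of explicit formulas. Accordingly the paper does the opposite of your plan: it \emph{defines} $\Delta_0$ by formulas (on $D_0$ as the dual of the multiplication of ${D_0}\us$, extended by $\Delta_0(Z\cdot\Sq(R))=(Z\otimes 1+1\otimes Z)\Delta(\Sq(R))$) and then verifies multiplicativity case by case, the representative computation being $\Delta_0(aX_{k,l})=\Delta_0(a)\Delta_0(X_{k,l})$ via the commutation rule \myref{ucomm} and the contraction identity $\Delta\cont(p,a)=\sum\cont(p,a')\otimes a''=\sum a'\otimes\cont(p,a'')$. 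Note that this is a case you classify as formal: you locate the ``only non-formal multiplications'' at $\Sq(R)\ast\Sq(S)$, whereas those are the ones controlled by the Cartan formula on $D_0$ plus primitivity of the corrections; the actual labor is in the bimodule-commutation cases. Finally, your worry that landing in $E_0\otimes E_0$ rather than $\widehat{E_0}\otimes\widehat{E_0}$ requires Lemmas \ref{psitheta} and \ref{e0defprep} is misplaced --- it is immediate from $\Delta_0(Z_k)=Z_k\otimes 1+1\otimes Z_k$.
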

      \begin{proof}
        The uniqueness is clear. 
        To show existence, we begin with  the dual of the 
        multiplication map ${D_0}\us\otimes {D_0}\us \rightarrow {D_0}\us$.
        This defines a $\Delta_0:D_0\rightarrow D_0\otimes D_0$
        with $\Delta_0(Y_{k,l}) = Y_{k,l}\otimes 1 + 1\otimes Y_{k,l}$.
        We extend this to all of $E_0$
        via $\Delta_0(Z\cdot \Sq(R)) = (Z\otimes 1 + 1\otimes Z)\cdot \Delta(\Sq(R))$
        for $Z\in\{X_{k,l}, \mu_0X_{k,l} \}$. We have to show that this map
        is multiplicative.

        This is a straightforward computation, 
        and we will work out only one representative 
        case. Let $a\in A$ and $\Delta a = \sum a'\otimes a''$. Then
        \begin{align*}
          \Delta_0\left(aX_{k,l}\right) &= \Delta_0\Big(\sum\nolimits_{i,j\ge 0} 
          X_{k+i,l+j}\cont\left(\xi_i^{2^{k+1}}\xi_j^{2^{l+1}},a\right)\Big)
          \\&= \sum_{i,j\ge 0} 
          \left(X_{k+i,l+j}\otimes 1 + 1\otimes X_{k+i,l+j}\right)
          \Delta_0\left(\cont\left(\xi_i^{2^{k+1}}\xi_j^{2^{l+1}},a\right)\right) 
          \\&= \sum_{a', a''} 
          \sum_{i,j\ge 0} \left\{
          \left(X_{k+i,l+j}\cont\left(\xi_i^{2^{k+1}}\xi_j^{2^{l+1}},a'\right)\right)
          \otimes a'' \right.\\
          & \qquad \qquad\qquad \qquad \left.
          + \, a'\otimes \left(X_{k+i,l+j}\cont\left(\xi_i^{2^{k+1}}\xi_j^{2^{l+1}},a''\right)\right)
          \right\}
          \\&= \sum_{a', a''}  \left(a'X_{k,l}\otimes a'' + a'\otimes a''X_{k,l}\right)
        \end{align*}
        where we have used 
        $\Delta\cont(p,a) = \sum \cont(p,a')\otimes a'' = 
        \sum a'\otimes \cont(p,a'')$.
        This shows
        $\Delta_0(aX_{k,l})=\Delta_0(a)\Delta_0(X_{k,l})$.
        We
        leave the remaining cases to the reader.
      \end{proof}

      There is also a canonical augmentation $\aug:E_0\rightarrow \ZZ/4$
      which is dual to the inclusion $\ZZ/4\subset {D_0}\us\subset {E_0}\us$. 
      The following corollary is then obvious.
      \begin{coro}\label{e0hopf}
        $E_0$ is a Hopf algebra over $\ZZ/4$ with augmentation $\aug$ and 
        coproduct
        $\Delta_0$.
        The projection $E_0\rightarrow A$ is a map of Hopf algebras.
      \end{coro}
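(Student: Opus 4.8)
The two ingredients we need have already been assembled: the multiplicative coproduct $\Delta_0$ produced by the preceding Lemma and the augmentation $\aug$, which as the dual of the unit inclusion $\ZZ/4\subset{D_0}\us\subset{E_0}\us$ is itself multiplicative (it is simply the projection onto the degree-$0$ part $E_0^0=\ZZ/4$ of the connected graded algebra $E_0$). The plan is to exploit that \emph{both} structure maps are algebra homomorphisms, so that each bialgebra axiom becomes an identity between algebra maps and hence may be verified on a generating set. As an algebra $E_0$ is generated by the $\Sq(R)$ together with the primitives $Z\in\{Y_{k,l},X_{k,l},\mu_0X_{k,l}\}$: the decomposition $E_0=D_0+\sum X_{k,l}A+\sum\mu_0X_{k,l}A+\sum X_{-1,k}A$ combined with $D_0=\ZZ/4\{\Sq(R)\}\oplus\sum Y_{k,l}A$ writes every element as a $\ZZ/4$-combination of products $Z\ast\Sq(R)$, and $A$ in turn is generated by the $\Sq(R)$.

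First I would check coassociativity and the counit axioms. Both $(\Delta_0\otimes\id)\Delta_0$ and $(\id\otimes\Delta_0)\Delta_0$ are algebra maps $E_0\to E_0\otimes E_0\otimes E_0$, being composites of $\Delta_0$ with its own tensor powers, so it suffices that they agree on generators. On $\Sq(R)$ this is the coassociativity of the deconcatenation coproduct $\sum_{E+F=R}\Sq(E)\otimes\Sq(F)$, dual to the associativity of the product on ${D_0}\us$; on each primitive $Z$ both sides equal $Z\otimes1\otimes1+1\otimes Z\otimes1+1\otimes1\otimes Z$. Likewise $(\aug\otimes\id)\Delta_0$ is an algebra map sending $\Sq(R)$ to $\sum_{E+F=R}\aug(\Sq(E))\Sq(F)=\Sq(R)$ and each $Z$ to $\aug(Z)+Z=Z$, so it equals $\id$, and symmetrically on the right. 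This makes $E_0$ a bialgebra over $\ZZ/4$.

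The one point that is not purely formal is the existence of an antipode, and here I would use that $E_0$ is \emph{connected graded} over $\ZZ/4$: it is concentrated in nonnegative degrees with $E_0^0=\ZZ/4$. For each positive-degree generator the reduced coproduct $\overline\Delta_0(x)=\Delta_0(x)-x\otimes1-1\otimes x$ lands in strictly lower degrees — trivially so for the primitives, and by the deconcatenation formula for the $\Sq(R)$ — and since $\Delta_0$ is multiplicative this persists on all of $E_0^+$. Hence the standard inductive formula $S(x)=-x-\sum S(x')x''$, with $\overline\Delta_0(x)=\sum x'\otimes x''$, defines a two-sided antipode by induction on degree (equivalently, $\id$ has a convolution inverse $\sum_{n\ge0}(\eta\aug-\id)^{\ast n}$, which converges degreewise). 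This completes the proof that $E_0$ is a Hopf algebra.

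Finally, that $\pi:E_0\to A$ is a map of Hopf algebras is immediate. It is multiplicative by construction and carries $\aug$ to the augmentation of $A$. It is a map of coalgebras because $(\pi\otimes\pi)\Delta_0$ and $\Delta_A\pi$ are both algebra maps agreeing on generators: on $\Sq(R)$ each gives $\sum_{E+F=R}\Sq(E)\otimes\Sq(F)$, while each primitive $Z$ lies in $\ker\pi$ and is sent to $0$ on both sides. Compatibility with the antipodes is then automatic from their uniqueness.
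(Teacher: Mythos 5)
Your proof is correct and is essentially the argument the paper has in mind: after the preceding Lemma produces the multiplicative $\Delta_0$ and the dual augmentation $\aug$, the paper simply declares the corollary ``obvious,'' and your verification of the bialgebra axioms on the spanning set $\{\Sq(R)\}\cup\{Y_{k,l},X_{k,l},\mu_0X_{k,l}\}$ is exactly the routine check being elided. Your one genuine addition --- constructing the antipode by the standard induction on degree in the connected graded algebra $E_0$ over $\ZZ/4$, and deducing compatibility of $\pi:E_0\rightarrow A$ with antipodes from convolution-inverse uniqueness --- is a correct way to settle a point the paper passes over silently (the antipode also exists because ${E_0}\us$ represents a group-valued functor), so there is nothing to object to.
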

    \end{subsection}

    \begin{subsection}{The folding product}
      We next want to define a secondary diagonal
      $\Delta_1:E_1\rightarrow (E\fotimes E)_1$. 
      This requires a short discussion of the folding product
      $(E\fotimes E)_\bullet$ that figures on the right hand side.
      The necessary algebraic background is developped in 
      \cite[Ch.\ 12]{baues} and \cite[Introduction (B5-B6)]{baues}.

      Let $p$ for the moment be an arbitrary prime and $\GG=\ZZ/p^2$.
      %As in \cite[Ch.\ 12]{baues}
      We consider exact sequences of $\GG$-modules of the form
      \begin{align*}
        M_\bullet &= \Big(\xymatrix@1@C=1.5em{
          A^{\otimes m}\ar@{ >->}[r]^-{\iota}&M_1\ar[r]^-{\partial}
          & M_0\ar@{ ->>}[r]^-{\pi}&A^{\otimes m}
        }\Big)
      \end{align*}
      Under certain assumptions (e.g., if both factors are $[p]$-algebras
      in the sense of \cite[12.1.2]{baues}) one can define the folding product
      \begin{align*}
        (M\fotimes N)_\bullet &= \Big(
          A^{\otimes (m+n)}\to/ >->/^{\iota_\sharp}
          (M\fotimes N)_1\to^{\partial_\sharp}
          \underbrace{(M\fotimes N)_0}_{=M_0\otimes N_0}
          \to/ ->>/^{\pi\otimes\pi} A^{\otimes (m+n)}
        \Big)
      \end{align*}
      of two such sequences.
      Here $(M\fotimes N)_1$ is a quotient of $M_1\otimes N_0 \oplus N_0\otimes M_1$,
      so we can represent its elements as tensors $m\fotimes n$ 
      where either $m\in M_1$, $n\in N_0$ or $m\in M_0$, $n\in N_1$.
      Let $R_M=\ker \left(M_0\rightarrow A\right)$ 
      and $R_N=\ker \left(N_0\rightarrow A\right)$ be the relation modules.
      Then $(M\fotimes N)_1$ fits into the short exact sequence
      \begin{align*} 
        A^{\otimes (m+n)}\to/ >->/^{\iota_\sharp}(M\fotimes N)_1
        \to/ ->>/^{\partial} R_M\otimes N_0 + M_0\otimes R_N = R_{M\fotimes N}
      \end{align*}
      with $\partial(m \fotimes n) 
      = (\partial m)\otimes n + (-1)^{\grade{m}} m \otimes (\partial n)$.
      
      Unfortunately, $D_\bullet$ and $E_\bullet$ are not $[p]$-algebras
      in the sense of \cite[12.1.2]{baues}, 
      because  $D_0$ and $E_0$ fail to be $\GG$-free.
      It is easy to see, however,  that in both cases
      $\partial$ restricts to an isomorphism $\mu_0M_0\to pM_0$,
      so the reduction $\tilde M_\bullet$ 
      with $\tilde M_1=M_1/\mu_0M_0$ and $\tilde M_0 = M_0/pM_0$
      is again an exact sequence.
      A careful reading of Baues's theory shows that this suffices
      for the construction of the folding product.
      
      Assume now that we have a right-linear splitting $u:R_M\hookrightarrow M_1$
      of $\partial$.
      %$\partial: M_1\to/ ->>/R_M$.
      For $B_\bullet$ such a splitting has been 
      established in \cite[16.1.3-16.1.5]{baues}.
      For $D_\bullet$ we take the map $R_D\rightarrow D_1$
      \begin{align*}
        2\Sq(R) &\mapsto \mu_0\Sq(R),
        \quad Y_{k,l}a \mapsto U_{k,l}a\quad\text{(for $k<l$, $a\in A$)}.
      \end{align*} 
      from \eqref{udef} in the introduction.
      We extend this to $R_E=R_D\oplus W\rightarrow E_1 = D_1\oplus W$ 
      via $u_E=u_D\oplus\id_W$ where $W=X+\mu_0X$.
      We then get an induced splitting $u_\sharp$ for $(M\fotimes M)_\bullet$
      with $u_\sharp(r\otimes m) = u(r)\fotimes m$
      and  $u_\sharp(m\otimes r) = m\fotimes u(r)$
      for $r\in R_M$, $m\in M_0$.
      
      The splitting $u$ allows us to decompose $M_1$ as the direct sum 
      $M_1 = \iota(A) \oplus u(R_M)$. 
      However, this decomposition is only valid for the {\em right}
      action of $M_0$ on $M_\bullet$. We also have an action from the left
      and this is described by the associated {\em multiplication map}\footnote{
        This map is denoted $A$ in Baues's theory.}
      $\op:M_0\otimes R_M\rightarrow A^{\otimes m}$ with
      $$m\cdot u(r) = u(m\cdot r) + \iota(\op(m,r)).$$
      In our examples, $\op$ actually factors through 
      $M_0\otimes R_M\twoheadrightarrow A\otimes R_M$.
      For $B_\bullet$ this is proved in \cite[16.3.3]{baues}.
      For $D_\bullet$ and $E_\bullet$ it is obvious as both 
      $D_1$ and $E_1$ are $A$-bimodules to begin with. 
      
      We will now compute $\op$ and $\op_\sharp$ explicitly
      for $D_\bullet$ and $E_\bullet$.
     \begin{lemma}\label{opcomp}
        For $d\in D_0$ and $-1\le k<l$
        one has $\op(a,2d)=\kappa(a)\pi(d)$
        and
        \begin{align*}
          \op(a,Y_{k,l}) 
          &= \sum_{\substack{i,j\ge 0,\\k+i\ge l+j}} 
          \Sq(\Delta_{k+i+1}+\Delta_{l+j+1})\cont(\xi_i^{2^{k+1}}\xi_j^{2^{l+1}},a).
        \end{align*}
        Furthermore, $\op(a,x)=0$ for all $x\in X+\mu_0X$.
      \end{lemma}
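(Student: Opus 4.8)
The plan is to read off $\op(a,r)$ directly from its defining equation $a\cdot u(r) = u(a\cdot r) + \iota(\op(a,r))$, treating separately the three types of relation $r = 2d$, $r = Y_{k,l}$, and $r\in X+\mu_0 X$. Two structural facts reduce each case to a finite computation: since $D_1$ (resp.\ $E_1$) is an $A$-bimodule, the left $D_0$-action factors through $\pi$; and since $R_D^2=0$, the left $D_0$-action on $R_D$ factors through $D_0/R_D = A$ as well. Together with the injectivity of $\iota$ this lets me identify $\op(a,r)$ with the element $a\cdot u(r) - u(a\cdot r)$ of $D_1$, which I then check lands in the copy of $A$.

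For $r = 2d$ I first reduce to $d = \Sq(R)$: both sides are additive in $d$, and the $Y$-part of $d$ contributes nothing since $2Y_{k,l}=0$ (the functional $Y_{k,l}$ takes values in $2\ZZ/4$) and $\pi(Y_{k,l})=0$. With $u(2\Sq(R)) = \mu_0\Sq(R)$, the bimodule law $a\mu_0 = \mu_0 a + \kappa(a)$ gives $a\cdot u(2\Sq(R)) = \mu_0(a\Sq(R)) + \kappa(a)\Sq(R)$. For the other term I lift $a$ to $\sigma(a)$ and use $2R_D=0$ to write $\sigma(a)\cdot 2\Sq(R) = \sum_i 2\Sq(R_i)$, where $a\Sq(R) = \sum_i\Sq(R_i)$ in $A$; hence $u(a\cdot 2\Sq(R)) = \mu_0(a\Sq(R))$. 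Subtracting leaves $\kappa(a)\Sq(R) = \kappa(a)\pi(\Sq(R))$, and additivity finishes this case.

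The main work is $r = Y_{k,l}$ with $-1\le k<l$, where $u(Y_{k,l}) = U_{k,l}$. Writing $c_{ij} = \cont(\xi_i^{2^{k+1}}\xi_j^{2^{l+1}},a)$, the rule \eqref{ucomm} expands $a\cdot u(Y_{k,l}) = a\,U_{k,l} = \sum_{i,j\ge 0} U_{k+i,l+j}c_{ij}$, after which I reduce each $U_{k+i,l+j}$ with $k+i\ge l+j$ via the relations \eqref{urels}; the rule \eqref{yklcomm} expands $a\,Y_{k,l} = \sum_{i,j\ge 0} Y_{k+i,l+j}c_{ij}$, after which I reduce each $Y_{k+i,l+j}$ with $k+i\ge l+j$ via \eqref{yklrel} before applying $u$. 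The key point is that the two expansions agree in their $U$-content: the reflected generators $U_{l+j,k+i}$ (for $k+i>l+j$) and the diagonal terms $\mu_0\Sq(\Delta_{k+i+2})c_{ij}$ (for $k+i=l+j$) occur identically in both and cancel in the difference. What remains is exactly the $A$-valued discrepancy between the two conventions, namely the terms $\Sq(\Delta_{k+i+1}+\Delta_{l+j+1})c_{ij}$ for $k+i>l+j$ together with the diagonal terms $\Sq(2\Delta_{n+1})c_{ij}$ for $k+i=l+j=n$; since $\Sq(2\Delta_{n+1}) = \Sq(\Delta_{n+1}+\Delta_{n+1})$ has the same shape, the two families merge into the single sum over $k+i\ge l+j$ claimed. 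This bookkeeping across the three regions $k+i<l+j$, $k+i>l+j$, $k+i=l+j$ is the only genuinely delicate step, and I expect it to be the main obstacle.

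The last case is immediate: by construction $u_E$ restricts to the identity on $W = X+\mu_0 X$, and $W$ is a left $A$-subbimodule of both $E_0$ and $E_1$, since $a\,X_{k,l} = \sum X_{k+i,l+j}\cont(\xi_i^{2^{k+1}}\xi_j^{2^{l+1}},a)\in X$ and $a\cdot\mu_0 X_{k,l} = \mu_0(a X_{k,l}) + \kappa(a)X_{k,l}\in W$. Thus $u_E$ is left $A$-linear on $W$, whence $\op(a,x) = a\cdot u_E(x) - u_E(a\cdot x) = 0$.
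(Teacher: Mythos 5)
Your proposal is correct and follows the paper's own argument in all three cases: the $2d$ case via $u(2d)=\mu_0\pi(d)$ and the commutation rule $a\mu_0=\mu_0a+\kappa(a)$, the $Y_{k,l}$ case by expanding $a\cdot U_{k,l}$ and $a\cdot Y_{k,l}$ and reducing the terms with $k+i\ge l+j$ through \eqref{urels} and \eqref{yklrel} so that only the $\Sq(\Delta_{k+i+1}+\Delta_{l+j+1})$ discrepancies survive, and the $X+\mu_0X$ case from the left-linearity of $u\vert_W=\id$. Your extra bookkeeping (additivity in $d$ using $2Y_{k,l}=0$, and the merge of the diagonal terms $\Sq(2\Delta_{n+1})$ into the single sum over $k+i\ge l+j$) merely spells out steps the paper leaves implicit.
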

       \begin{proof}
        Since $u(2d)=\mu_0\pi(d)$ one finds
        $au(2d)=\kappa(a)\pi(d)+u(a\cdot 2d)$ 
        which proves $\op(a,2d)=\kappa(a)\pi(d)$.
        
        %Now fix some $-1\le k<l$. 
        %For brevity, let $a_{i,j}=\cont(\xi_i^{2^{k+1}}\xi_j^{2^{l+1}},a)$.
        We have 
        $a\cdot u(Y_{k,l}) = 
        \sum_{i,j\ge 0} U_{k+i,l+j}$ $\cont(\xi_i^{2^{k+1}}\xi_j^{2^{l+1}},a)$.
        Using the relations \myref{urels} we can write 
        \begin{align*}
          U_{k+i,l+j} &= \begin{cases} 
            u(Y_{k+i,l+j}) & (k+i<l+j), \\
            u(2\Sq(\Delta_{k+i+2})) + \Sq(2\Delta_{k+i+1}) & (k+i=l+j), \\
            u(Y_{l+j,k+i}) + \Sq(\Delta_{k+i+1}+\Delta_{l+j+1})   & (k+i>l+j).
          \end{cases}
        \end{align*}
        Therefore 
        \begin{align*}
          a\cdot u(Y_{k,l})
          &= u(aY_{k,l}) + \sum_{\substack{i,j\ge 0,\\k+i\ge l+j}} 
          \Sq(\Delta_{k+i+1}+\Delta_{l+j+1})\cont(\xi_i^{2^{k+1}}\xi_j^{2^{l+1}},a)
        \end{align*}
        as claimed.

        Finally, $\op(a,-)$ vanishes on $M=X+\mu_0X$
        because $u\vert_M=\id$ is left-linear.
      \end{proof}

       For $\op_\sharp$ there is a similar result.
       \begin{lemma}\label{opsharpcomp}
        Write $B_{k,l,i,j}=\Sq(\Delta_{k+i+1}+\Delta_{l+j+1})$. Then
        \begin{align*}
          \op_\sharp(a,\Delta(2d)) &= \Delta\op(a,2d), 
          \quad \text{(for $d\in D_0$)}, \\
          \op_\sharp(a,\Delta(Y_{k,l})) 
          &= \sum_{\substack{i,j\ge 0,\\k+i\ge l+j}} 
          \left(B_{k,l,i,j}\otimes 1 + 1\otimes B_{k,l,i,j}\right) 
          \cont(\xi_i^{2^{k+1}}\xi_j^{2^{l+1}},a).
        \end{align*}
        One has $\op_\sharp(a,\Delta(x)) = 0$ for $x\in X+\mu_0X$.
      \end{lemma}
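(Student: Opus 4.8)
The plan is to reduce the whole statement to Lemma~\ref{opcomp} by first isolating a single Leibniz-type formula that expresses $\op_\sharp$ through $\op$ and the coproduct $\Delta_0$, and then specializing it to the three kinds of relations. Throughout, the entry $a$ in $\op_\sharp(a,-)$ is to be read as the diagonal $\Delta(a)=\sum a'\otimes a''$, since $\op_\sharp$ factors through $A^{\otimes 2}\otimes R_{M\fotimes M}$.

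First I would unwind the folding-product data. Recall that $u_\sharp(r\otimes m)=u(r)\fotimes m$ and $u_\sharp(m\otimes r)=m\fotimes u(r)$, and that in $(M\fotimes M)_1$ one has $\iota(x)\fotimes m=\iota_\sharp(x\otimes\pi(m))$ and $m\fotimes\iota(x)=\iota_\sharp(\pi(m)\otimes x)$ (no signs intervene, as the relevant degrees vanish). Writing $\Delta_0 r=\sum r_{(1)}\otimes r_{(2)}$ so that each summand has exactly one tensor factor in $R_M$, I would compute $\Delta(a)\cdot u_\sharp(\Delta_0 r)$ by letting $a'$ act on the slot carrying the relation and $a''$ on the other, applying $a'\cdot u(s)=u(a's)+\iota(\op(a',s))$ on the relation slot and multiplicativity of $\pi$ on the other. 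Collecting the $\iota_\sharp$-terms yields the master formula
$$\op_\sharp(a,\Delta_0 r)=\sum_{a',a''}\Big[\sum_{r_{(1)}\in R_M}\op(a',r_{(1)})\otimes a''\pi(r_{(2)})+\sum_{r_{(2)}\in R_M}a'\pi(r_{(1)})\otimes\op(a'',r_{(2)})\Big].$$

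With this in hand the three cases become specializations. For $r=Y_{k,l}$, primitivity ($\Delta_0 Y_{k,l}=Y_{k,l}\otimes 1+1\otimes Y_{k,l}$, built into the definition of $\Delta_0$) collapses the master formula to $\sum_{a',a''}\big[\op(a',Y_{k,l})\otimes a''+a'\otimes\op(a'',Y_{k,l})\big]$; substituting $\op(a,Y_{k,l})=\sum_{k+i\ge l+j}B_{k,l,i,j}\cont(\xi_i^{2^{k+1}}\xi_j^{2^{l+1}},a)$ from Lemma~\ref{opcomp} and resumming via $\Delta\cont(p,a)=\sum\cont(p,a')\otimes a''=\sum a'\otimes\cont(p,a'')$ produces exactly $(B_{k,l,i,j}\otimes 1+1\otimes B_{k,l,i,j})\cont(\xi_i^{2^{k+1}}\xi_j^{2^{l+1}},a)$. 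For $r=x\in X+\mu_0X$ the vanishing is immediate: again by primitivity of the $X_{k,l}$ and $\mu_0X_{k,l}$, every relation-carrying tensor factor of $\Delta_0 x$ itself lies in $X+\mu_0X$, on which $\op\equiv 0$ by Lemma~\ref{opcomp}, so each term of the master formula dies.

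The case $r=2d$ is the one where $\Delta_0 r$ is not primitive, so here the master formula retains the full diagonal. Using $\op(a,2d)=\kappa(a)\pi(d)$ together with the facts that $\pi$ is a coalgebra map (Corollary~\ref{e0hopf}) and that $\kappa=\cont(\xi_1,-)$ satisfies $\Delta\kappa(a)=\sum\kappa(a')\otimes a''$, the expression reassembles into $\Delta(\kappa(a)\pi(d))=\Delta\op(a,2d)$. I expect the only genuine obstacle to be the derivation of the master formula itself: pinning down the folding-product boundary relations and the diagonal left action precisely enough, following \cite[Ch.\ 12]{baues}, to justify the passage $\iota(x)\fotimes m\mapsto\iota_\sharp(x\otimes\pi m)$ and the splitting of $a$ as $\sum a'\otimes a''$. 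Once that formula is secured, the remaining manipulations are the same contraction-coproduct identities already exploited in the proof of Lemma~\ref{opcomp}.
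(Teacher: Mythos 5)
Your proposal is correct and takes essentially the same route as the paper: the paper's own proof likewise reduces everything to Lemma \ref{opcomp}, expanding $\op_\sharp(a,Y_{k,l}\otimes 1+1\otimes Y_{k,l})$ through the diagonal of $a$ (your master formula specialized to primitives), resumming via $\Delta\cont(p,a)=\sum\cont(p,a')\otimes a''$, handling $r=2d$ by the same $\kappa$-and-coalgebra-map computation, and deducing the vanishing on $X+\mu_0X$ from the vanishing of $\op$ there. The only difference is organizational—you state the Leibniz-type formula for $\op_\sharp$ in general before specializing, while the paper applies it case by case without naming it.
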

       \begin{proof}
        The first claim follows from
        $$\op_\sharp(a,\Delta(2d)) = \kappa(a)\Delta(2d) 
        = \Delta\left(\kappa(a)\cdot 2d\right) = \Delta\op(a,2d).$$
        For the second we use
        $\op_\sharp(a,\Delta(Y_{k,l})) 
        = \op_\sharp(a,Y_{k,l}\otimes 1 + 1\otimes Y_{k,l})$. 
        From Lemma \ref{opcomp} we find
        \begin{align*}
          \op_\sharp(a,Y_{k,l}\otimes 1) 
          &= \sum \op(a',Y_{k,l})\otimes a'' \\
          &= \sum B_{k,l,i,j}\cont(\,\cdots,a')\otimes a'' \\
          &= \sum (B_{k,l,i,j}\otimes 1) \cont(\,\cdots, a)
        \end{align*}
        where we have temporarily suppressed some details.
        There is a similar formula for $\op_\sharp(a,1\otimes Y_{k,l})$
        and together they make up the second claim.

        That $\op_\sharp(-,\Delta(X+\mu_0X))$ vanishes 
        is clear from the vanishing of $\op$ on $A\otimes (X+\mu_0X)$.
      \end{proof}
    \end{subsection}

    \begin{subsection}{The secondary coproduct}
      We can now define the secondary diagonal 
      $\Delta_\bullet:E_\bullet \rightarrow (E\fotimes E)_\bullet$.
      We still need a few preparations.
      \begin{lemma}\label{nabxlin}
        Let $U''\subset U$ be the sub-bimodule on the $U_{k,l}$ with $k,l\ge 0$.
        There is a bilinear $\nabla:U''\rightarrow A\otimes A$
        with $U_{k,l}\mapsto Q_l\otimes Q_k$.
      \end{lemma}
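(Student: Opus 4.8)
The plan is to exploit that $U''$ is free as a \emph{right} $A$-module on the generators $U_{k,l}$ with $k,l\ge 0$. Hence $\nabla$ is automatically well defined and right-linear once we prescribe $U_{k,l}\mapsto Q_l\otimes Q_k$, and there are no relations among the right-module generators to worry about (the symmetry $U_{k,l}\leftrightarrow U_{l,k}$ lives in $K$, not in $U$). The only genuine content is therefore \emph{left}-linearity for the diagonal $A$-bimodule structure on $A\otimes A$, namely $a\cdot(x\otimes y)=\sum (a'x)\otimes(a''y)$ and $(x\otimes y)\cdot a=\sum (xa')\otimes(ya'')$ with $\Delta a=\sum a'\otimes a''$. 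This is exactly the structure $A\otimes A$ inherits as $\pi_1$ of the folding product after restriction along $\Delta\colon A\to A\otimes A$, so it is the structure we want. I would also note in passing that $U''$ really is a sub-bimodule: in \myref{ucomm} every index $k+i$, $l+j$ stays $\ge 0$ because $i,j\ge 0$.

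Before the main computation I would record the two facts about contraction that drive everything. Since $A\us$ is commutative, contraction splits multiplicatively along the coproduct: $\Delta\cont(pq,a)=\sum\cont(p,a')\otimes\cont(q,a'')$, and the two factors may be assigned to the Sweedler slots in either order. Together with \myref{qcomm} this is all that is needed, and crucially \myref{qcomm} is applicable precisely because restricting to $U''$ keeps all indices $\ge 0$, so every $Q_k=\Sq(\Delta_{k+1})$ is an honest Milnor primitive.

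The core step is to check $\nabla(aU_{k,l})=a\cdot(Q_l\otimes Q_k)$. On the left, \myref{ucomm} together with right-linearity give
$\nabla(aU_{k,l})=\sum_{i,j\ge 0}(Q_{l+j}\otimes Q_{k+i})\cdot\cont(\xi_i^{2^{k+1}}\xi_j^{2^{l+1}},a)$;
expanding the diagonal right action and splitting the contraction as $\sum\cont(\xi_j^{2^{l+1}},a')\otimes\cont(\xi_i^{2^{k+1}},a'')$ rewrites this as $\sum_{i,j}\sum_{(a)}\bigl(Q_{l+j}\cont(\xi_j^{2^{l+1}},a')\bigr)\otimes\bigl(Q_{k+i}\cont(\xi_i^{2^{k+1}},a'')\bigr)$. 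On the right, $a\cdot(Q_l\otimes Q_k)=\sum_{(a)}(a'Q_l)\otimes(a''Q_k)$, and feeding each tensor factor through \myref{qcomm} produces the identical double sum. The swap $U_{k,l}\mapsto Q_l\otimes Q_k$ is exactly what makes the contraction variables $\xi_j^{2^{l+1}}$ and $\xi_i^{2^{k+1}}$ land in the first and second slots respectively.

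The only real obstacle is bookkeeping: one must choose the multiplicative splitting of $\cont(\xi_i^{2^{k+1}}\xi_j^{2^{l+1}},a)$ that routes $\xi_j^{2^{l+1}}$ into the first and $\xi_i^{2^{k+1}}$ into the second Sweedler factor — legitimate only because $A\us$ is commutative — and then match it against the two separate applications of \myref{qcomm}. I expect no deeper subtleties beyond verifying that this index matching works out and that the $\ge 0$ restriction is what keeps \myref{qcomm} in force.
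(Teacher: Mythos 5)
Your proof is correct and in substance the same as the paper's: the paper likewise reduces everything to left-linearity on the free right-module generators and verifies $a\left(Q_k\otimes Q_l\right)=\sum_{i,j\ge 0}(Q_{k+i}\otimes Q_{l+j})\cont(\xi_i^{2^{k+1}}\xi_j^{2^{l+1}},a)$ via \myref{qcomm}, merely organizing the Sweedler bookkeeping one tensor factor at a time (first computing $a(Q_k\otimes 1)$, then multiplying by $1\otimes Q_l$) instead of through your one-shot multiplicative splitting of the contraction along the coproduct. Your side remarks — right-freeness of $U''$, the symmetry relation living in $K$ rather than $U$, and the restriction $k,l\ge 0$ keeping \myref{qcomm} applicable — are all accurate.
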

      \begin{proof}
        One has
        \begin{align*}
          a\left(Q_k\otimes 1\right) &=
          \sum (a'Q_k\otimes a'') = 
          \sum_{i\ge 0} Q_{k+i}\cont(\xi_i^{2^{k+1}},a') \otimes a'' \\
          &= \sum_{i\ge 0} (Q_{k+i}\otimes 1)\cont(\xi_i^{2^{k+1}},a).
          \intertext{Therefore}
          a\left(Q_k\otimes Q_l\right) 
          &= a\left(Q_k\otimes 1\right)\left(1\otimes Q_l\right) = \sum_{i,j\ge 0} 
          (Q_{k+i}\otimes Q_{l+j})\cont(\xi_i^{2^{k+1}}\xi_j^{2^{l+1}},a)
        \end{align*}
        which is the same commutation relation as for the $U_{k,l}$.
      \end{proof}

      \begin{lemma}\label{Phidef}
        There is a right-linear
        $\nabla:R_E\rightarrow A\otimes A \,\oplus\, \mu_0A\otimes A$
        with
        \begin{align*}
          \nabla X_{k,l} &= Q_l\otimes Q_k, 
          \quad \nabla \mu_0X_{k,l} = \mu_0 Q_l\otimes Q_k
          & (0&\le k,l) \\
          \quad \nabla Y_{k,l} &= Q_l\otimes Q_k
          & (0&\le k<l)
        \end{align*}
        and $\nabla\vert_{2D_0}=\nabla\vert_{Z_{\ast}}=0$
        where $Z_k=X_{-1,k}+Y_{-1,k}$.
        Let $\Phi(a,r) = \nabla(ar)-a(\nabla r)$ be 
        the left linearity defect of $\nabla$.
        Then
        \begin{align}
          \label{nablalindef}
          \Phi(a,r) &= \Delta \op(a,r) + \op_\sharp(a,\Delta r)
        \end{align}
        for $a\in A$ and $r\in R_E$. 
        %In particular, $\nabla\vert_{X+\mu_0X}$ is bilinear.
      \end{lemma}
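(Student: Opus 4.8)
The plan is to build $\nabla$ by prescribing it on right $A$-module generators of $R_E$, and then to prove the defect formula \myref{nablalindef} by checking it on those same generators, invoking right-linearity to pass to the general case. For the construction, recall $R_E=R_D\oplus W$ with $W=X+\mu_0X$. As right $A$-module generators I would take $2\in 2D_0$, the $Y_{k,l}$ with $0\le k<l$, the $X_{k,l}$ and $\mu_0X_{k,l}$ with $k,l\ge 0$, and the combinations $Z_k=X_{-1,k}+Y_{-1,k}$; the last because the defining condition $\theta_D\circ\nproj=\theta_E$ of $E_0$ forces the $X_{-1,k}$- and $Y_{-1,k}$-coefficients to agree, leaving only the diagonal $Z_k$ free. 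On $X_{k,l},\mu_0X_{k,l}$ (for $k,l\ge 0$) the prescribed values coincide with the honestly bilinear map of Lemma \ref{nabxlin}; on the $Y_{k,l}$ (for $0\le k<l$) right-linearity holds because $Q_l\otimes Q_k$ obeys the same commutation law as $Y_{k,l}$ (Lemma \ref{nabxlin} versus \myref{yklcomm}); and $\nabla$ is declared to vanish on $2D_0$ and on the $Z_k$. The only identifications to respect are $2Y_{k,l}=0$ (matched by $2(Q_l\otimes Q_k)=0$ in $A\otimes A$), together with $Y_{k,k}=2\Sq(\Delta_{k+2})\in 2D_0$ and $Y_{l,k}=Y_{k,l}$, none of which is assigned a conflicting value; so $\nabla$ is well defined and right-linear.

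For \myref{nablalindef} I would then check generators. On $X_{k,l},\mu_0X_{k,l}$ (for $k,l\ge 0$) both sides vanish: $\nabla$ is left-linear there by Lemma \ref{nabxlin}, while $\op$ and $\op_\sharp$ vanish on $X+\mu_0X$ by Lemmas \ref{opcomp} and \ref{opsharpcomp}. On $2D_0$ both sides vanish as well, since $\nabla|_{2D_0}=0$ gives $\Phi(a,2d)=0$, whereas on the right $\op_\sharp(a,\Delta(2d))=\Delta\op(a,2d)$ by Lemma \ref{opsharpcomp}, so the two summands cancel modulo $2$.

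The substantial case is $r=Y_{k,l}$ (for $0\le k<l$), with $r=Z_k$ entirely analogous. Expanding $\nabla(aY_{k,l})$ via \myref{yklcomm} and comparing with $a\,\nabla(Y_{k,l})=a(Q_l\otimes Q_k)$ through the $Q$-commutation of Lemma \ref{nabxlin}, the defect $\Phi(a,Y_{k,l})$ collects, for each $(i,j)$ with $k+i\ge l+j$, the term $Q_{k+i}\otimes Q_{l+j}+Q_{l+j}\otimes Q_{k+i}$ when $k+i>l+j$ (from $\nabla Y_{k+i,l+j}=Q_{k+i}\otimes Q_{l+j}$ via $Y_{l+j,k+i}=Y_{k+i,l+j}$) and the single term $Q_m\otimes Q_m$ when $k+i=l+j=m$ (from $\nabla Y_{m,m}=0$). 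On the right-hand side I would use $\Sq(\Delta_{k+i+1}+\Delta_{l+j+1})=Q_{k+i}Q_{l+j}$ in $A$ (since $\pi$ kills the $Y$ correction) together with the primitivity of the $Q$'s to get $\Delta\Sq(\Delta_{k+i+1}+\Delta_{l+j+1})=B\otimes 1+1\otimes B+Q_{k+i}\otimes Q_{l+j}+Q_{l+j}\otimes Q_{k+i}$ when $k+i>l+j$, and $\Delta\Sq(2\Delta_{m+1})=B\otimes 1+1\otimes B+Q_m\otimes Q_m$ when $k+i=l+j=m$, where $B=\Sq(\Delta_{k+i+1}+\Delta_{l+j+1})$. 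Adding $\op_\sharp(a,\Delta Y_{k,l})=\sum(B\otimes 1+1\otimes B)\cont(\cdots,a)$ from Lemma \ref{opsharpcomp}, the $B\otimes 1$ and $1\otimes B$ terms cancel in pairs modulo $2$ and precisely the cross terms remain, reproducing $\Phi(a,Y_{k,l})$.

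The main obstacle is exactly this bookkeeping in the $Y_{k,l}$ and $Z_k$ cases: one must keep the three regimes $k+i<l+j$, $k+i=l+j$, $k+i>l+j$ apart, use the identifications $Y_{l,k}=Y_{k,l}$ and $Y_{m,m}=2\Sq(\Delta_{m+2})$, and --- most delicately --- check that the diagonal contribution $Q_m\otimes Q_m$ agrees on both sides. It does because the value $\nabla Y_{m,m}=0$ on the left is mirrored on the right by the extra middle term $Q_m\otimes Q_m$ in the coproduct of $\Sq(2\Delta_{m+1})$, which $\op_\sharp$ does not cancel; the mod-$2$ cancellation of the $B\otimes 1+1\otimes B$ terms is what makes \myref{nablalindef} come out clean.
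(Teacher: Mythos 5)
Your proposal is correct and follows essentially the same route as the paper: the same free right $A$-module basis $\{2,\,Z_k,\,Y_{k,l}\,(k<l),\,X_{k,l},\,\mu_0X_{k,l}\}$ of $R_E$, the same easy cases via Lemmas \ref{nabxlin}, \ref{opcomp} and \ref{opsharpcomp}, and the same key computation comparing $\nabla$ on the $Y_{k,l}$ against the honestly bilinear map on the $U_{k,l}$, with error terms $Q_{k+i}\otimes Q_{l+j}+Q_{l+j}\otimes Q_{k+i}$ (resp.\ the diagonal $Q_m\otimes Q_m$ from $\nabla Y_{m,m}=0$ versus the middle term of $\Delta\Sq(2\Delta_{m+1})$) in the regime $k+i\ge l+j$, matched on the right-hand side after the mod-$2$ cancellation of the $B\otimes 1$ and $1\otimes B$ terms. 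Your bookkeeping even uses the correct indices $Q_{k+i}\otimes Q_{l+j}$ where the paper's displayed $C_{k,l,i,j}$ has an off-by-one slip, and your treatment of $Z_k$ as analogous matches the paper's.
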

      \begin{proof}
        $R_E$ is free as a right $A$-module 
        with basis $2$, $Z_k$ 
        (for $0\le k$), 
        $Y_{k,l}$ (for $0\le k<l$) and 
        $X_{k,l}$, $\mu_0X_{k,l}$ (for $0\le k,l$).
        Therefore $\nabla$ is well-defined and right-linear.

        We have 
        $\Phi(a,X_{k,l})=0$ and $\Phi(a,\mu_0X_{k,l})=0$ by Lemma \ref{nabxlin},
        $\Phi(a,2)=0$ and $\Delta \op(a,2) + \op_\sharp(a,\Delta 2)=0$
        by Lemma \ref{opsharpcomp},
        so it just remains to prove the formula for $r=Y_{k,l}$ and $r=Z_k$.

        Combining Lemmas \ref{opcomp} and \ref{opsharpcomp} we find
        \begin{align*}
          \Delta &\op(a,Y_{k,l}) + \op_\sharp(a,\Delta Y_{k,l}) \\
          &= \sum_{\substack{i,j\ge 0,\\k+i\ge l+j}} 
          \underbrace{\left(\Delta B_{k,l,i,j} 
            - B_{k,l,i,j}\otimes 1 + 1\otimes B_{k,l,i,j}\right) }_{=:C_{k,l,i,j}}
          \cont(\xi_i^{2^{k+1}}\xi_j^{2^{l+1}},a) \\
          \intertext{where}
          C_{k,l,i,j}
          &= \begin{cases}
            Q_{k+i+1}\otimes Q_{l+j+1} + Q_{l+j+1}\otimes Q_{k+i+1} & (k+i+1\not=l+j+1),\\
            Q_{k+i+1}\otimes Q_{l+j+1} & (k+i+1=l+j+1).
          \end{cases}
        \end{align*}
        To see that this is $\Phi(a,Y_{k,l})$ note first that 
        $\nabla(aU_{k,l})-a\nabla(U_{k,l})=0$ by Lemma \ref{nabxlin}.
        We can compute $\Phi(a,Y_{k,l})=\nabla(aY_{k,l})-a\nabla(Y_{k,l})$
        from this by changing every $\nabla U_{n,m}$ to $\nabla Y_{n,m}$.
        Since $\nabla U_{k,l} = \nabla Y_{k,l}$ for $k<l$ and
        $$\nabla U_{k+i,l+j} = 
        \begin{cases}
          \nabla Y_{k+i,l+j}  + C_{k,l,i,j} & (k+i\ge l+j)\\
          \nabla Y_{k+i,l+j}  & (k+i<l+j)
        \end{cases}$$
        this introduces exactly the error terms from the $C_{k,l,i,j}$.

        The case of $Z_k$ is similar and left to the reader.
      \end{proof}
      
      Now define $\coz$, $L:R_E\rightarrow A\otimes A$
      by $\nabla(r) = \coz(r) + \mu_0L(r)$. 
      Recall that $E_1 = \iota(A) \oplus u(R_E)$ and 
      let $\Delta_1:E_1 \rightarrow (E\fotimes E)_1$ be 
      given by 
      \begin{align}\label{deltaonee}
        \Delta_1\left(\iota(a)\right) &= \iota_\sharp\left(\Delta(a)\right) 
        ,\quad
        \Delta_1\left(u(r)\right) = u_\sharp\left(\Delta_0(r)\right) 
        + \iota_\sharp\left(\coz(r)\right).
      \end{align}
      \begin{lemma}
        With this coproduct
        $E_\bullet$ becomes a secondary Hopf algebra.
      \end{lemma}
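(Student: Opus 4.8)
The plan is to verify, for $\Delta_\bullet=(\Delta_0,\Delta_1)$, the axioms of a secondary Hopf algebra from \cite[Ch.\ 12]{baues}: that $\Delta_\bullet$ is a morphism of crossed algebras $E_\bullet\to (E\fotimes E)_\bullet$ extending the coproduct on $\pi_0=A$ and the Hopf coproduct $\Delta_0$ on $E_0$, that it is counital, and that it satisfies the secondary coassociativity law. The $E_0$-level statement is already Corollary \ref{e0hopf}, so everything to be done concerns $\Delta_1$ and its interaction with $\partial$, with the bimodule structure, and with $\Delta_0$.

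First I would dispose of the routine conditions. Since $E_1=\iota(A)\oplus u(R_E)$ as a right $E_0$-module and both $u_\sharp\circ\Delta_0$ and $\iota_\sharp\circ\coz$ are right-linear, the formula \eqref{deltaonee} defines a right-linear $\Delta_1$; the clause $\Delta_1(\iota a)=\iota_\sharp(\Delta a)$ builds in the extension of $\Delta_A$. The chain-map identity $\partial_\sharp\Delta_1=\Delta_0\partial$ is then immediate: on $\iota(A)$ both sides vanish because $\iota_\sharp(A^{\otimes 2})=\ker\partial_\sharp$, and on $u(r)$ one has $\partial_\sharp\Delta_1(u r)=\partial_\sharp u_\sharp(\Delta_0 r)=\Delta_0 r=\Delta_0\partial(u r)$, using that $u_\sharp$ splits $\partial_\sharp$ and that $\Delta_0$ carries $R_E$ into $R_{E\fotimes E}$ (because $\pi$ is a map of Hopf algebras, Corollary \ref{e0hopf}). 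Counitality follows from the augmentation $\aug$ of Corollary \ref{e0hopf} together with the fact that $\coz$ has image in the augmentation ideal.

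The crux is left $E_0$-linearity of $\Delta_1$, and here I would reduce everything to the defect formula \eqref{nablalindef} of Lemma \ref{Phidef}. Using the multiplication maps one has, for $a\in A$ and $r\in R_E$,
\begin{align*}
  a\cdot u(r) &= u(ar)+\iota(\op(a,r)),
  &\Delta_0(a)\cdot u_\sharp(\Delta_0 r) &= u_\sharp(\Delta_0(ar))+\iota_\sharp(\op_\sharp(a,\Delta r)),
\end{align*}
where I also use multiplicativity $\Delta_0(a)\Delta_0(r)=\Delta_0(ar)$. Substituting into \eqref{deltaonee} and cancelling the matching $u_\sharp(\Delta_0(ar))$ terms, the left-linearity defect $\Delta_1(a\cdot u(r))-\Delta_0(a)\cdot\Delta_1(u(r))$ becomes $\iota_\sharp$ applied to the $A\otimes A$-part of $\Phi(a,r)-\Delta\op(a,r)-\op_\sharp(a,\Delta r)$, with $\Phi(a,r)=\nabla(ar)-a\nabla(r)$; this vanishes by \eqref{nablalindef}, whose $\mu_0$-component separately records that $L$ is itself left-linear and is not needed for $\Delta_1$. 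Hence $\Delta_\bullet$ is a morphism of crossed algebras, and this step is essentially handed to us by Lemma \ref{Phidef}.

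The remaining and, I expect, hardest point is the secondary coassociativity $(\Delta_\bullet\fotimes 1)\Delta_\bullet=(1\fotimes\Delta_\bullet)\Delta_\bullet$. On $\iota(A)$ it is just coassociativity of $\Delta_A$, and the $u_\sharp\Delta_0$-summand of $\Delta_1(u r)$ is handled by coassociativity of the Hopf coproduct $\Delta_0$ (Corollary \ref{e0hopf}). What is genuinely left is to compare the two bracketings of the $\iota_\sharp\coz$-contribution; this produces a coassociativity defect for $\coz$ valued in $A\otimes A\otimes A$, and controlling it — together with identifying the resulting data with the invariants $\coz$, $L$ and the symmetry invariant $S$ of \cite{baues} — is where the real work lies. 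I would verify that this defect is exactly the cocycle predicted by the secondary-Hopf-algebra axioms, reducing it, as in the previous paragraph, to an identity among $\coz$, $\Delta_0$ and the multiplication maps $\op,\op_\sharp$; once that identity is in hand all the axioms are met and the lemma follows.
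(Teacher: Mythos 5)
Your reduction of left-linearity to \eqref{nablalindef} contains a bookkeeping error, and in fact the claim itself is false: $\Delta_1$ is \emph{not} left-linear. Writing $\nabla(r)=\coz(r)+\mu_0L(r)$ and using $a\mu_0=\mu_0a+\kappa(a)$, one gets $a\nabla(r)=a\coz(r)+\mu_0\,aL(r)+\kappa(a)L(r)$, so the $A\otimes A$-component of $\Phi(a,r)=\nabla(ar)-a\nabla(r)$ is $\coz(ar)-a\coz(r)-\kappa(a)L(r)$, not $\coz(ar)-a\coz(r)$: the term $\kappa(a)L(r)$ leaks out of the $\mu_0$-part when $a$ is moved past $\mu_0$. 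Since the right-hand side of \eqref{nablalindef} has no $\mu_0$-component (Lemmas \ref{opcomp} and \ref{opsharpcomp}), what \eqref{nablalindef} actually yields is $L(ar)=aL(r)$ together with $\coz(ar)-a\coz(r)=\kappa(a)L(r)+\Delta\op(a,r)+\op_\sharp(a,\Delta r)$, whence $\Delta_1(a\cdot u(r))-a\cdot\Delta_1(u(r))=\iota_\sharp\left(\kappa(a)L(r)\right)$, which is generally nonzero. This is precisely Lemma \ref{elscomp}, which the paper proves immediately after the present lemma; your claimed vanishing would contradict it. The saving grace is that left-linearity is not an axiom to be verified: in Baues's framework the diagonal need only be a right-linear map of $[p]$-algebras fitting the commutative ladder over $\Delta_0$ and $\Delta$ (\cite[12.1.2 (4)]{baues}), and the left-linearity failure is part of the structure, encoded by the left action operator $L$. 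So the step you single out as ``the crux'' is both wrongly executed and not needed.

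The second gap is that the part you correctly identify as hardest --- the counit and coassociativity identities --- is left as a programme (``once that identity is in hand''), and moreover framed as controlling a ``coassociativity defect cocycle'' for $\coz$, which suggests you expect a nontrivial correction; in fact coassociativity holds on the nose. The paper closes this step by exploiting right-linearity: it suffices to check $(\aug_\bullet\fotimes\id)\Delta_\bullet=\id=(\id\fotimes\aug_\bullet)\Delta_\bullet$ and $(\Delta_\bullet\fotimes\id)\Delta_\bullet=(\id\fotimes\Delta_\bullet)\Delta_\bullet$ on the right $A$-module generators $\mu_0$, $U_{k,l}$, $X_{k,l}$, $\mu_0X_{k,l}$ of $E_1$, where \eqref{deltaonee} evaluates in closed form, e.g.\ $\Delta_1(U_{k,l})=U_{k,l}\fotimes 1+1\fotimes U_{k,l}+Q_l\fotimes Q_k$ for $k<l$, $\Delta_1(X_{k,l})=X_{k,l}\fotimes 1+1\fotimes X_{k,l}+Q_l\fotimes Q_k$, and $\Delta_1(\mu_0X_{k,l})=\mu_0X_{k,l}\fotimes 1+1\fotimes\mu_0X_{k,l}$; the two iterated diagonals then agree by direct inspection, the extra term behaving like a primitive-square correction. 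Without this computation (or an equivalent explicit identity for $\coz$ against $\Delta_0$), your argument does not establish the lemma.
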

      \begin{proof}
        First note that $\Delta_1$ is right-linear and fits 
        into a commutative diagram
        \begin{align*}
          \xymatrix{
            A \ar@{ >->}[r]^-{\iota} \ar[d]^-{\vphantom{\Delta_1}\Delta} 
            & E_1 \ar[r]^-{\partial}\ar[d]^-{\Delta_1}  
            & E_0 \ar@{ ->>}[r]\ar[d]^-{\Delta_0}   
            & A\ar[d]^-{\vphantom{\Delta_1}\Delta}  \\
            A\otimes A \ar@{ >->}[r]^-{\iota_\sharp} 
            & (E\fotimes E)_1 \ar[r]^-{\partial} 
            & E_0\otimes E_0 \ar@{ ->>}[r] & A\otimes A \\
          }
        \end{align*}
        $\Delta_\bullet:E_\bullet\rightarrow (E\fotimes E)_\bullet$ 
        is therefore a map of $[p]$-algebras
        in the sense of \cite[12.1.2 (4)]{baues}.
        There is also a natural augmentation $\aug_\bullet : E_\bullet
        \rightarrow G_\bullet$ where 
        $G_\bullet = 
        \left(\FF\hookrightarrow\FF+\mu_0\FF\rightarrow \GG\twoheadrightarrow\FF\right)$
        is the unit object for the folding product. 
        
        It remains to verify the usual identities
        \begin{align*}
          (\aug_\bullet \fotimes\id)\Delta_\bullet &= \id
          = (\id\fotimes\aug_\bullet)\Delta_\bullet,
          \quad
          (\Delta_\bullet\fotimes\id)\Delta_\bullet
          = (\id\fotimes \Delta_\bullet)\Delta_\bullet.
        \end{align*}
        This can be done on the $A$ generators 
        $\mu_0, U_{k,l}, X_{k,l}, \mu_0X_{k,l} \in E_1$.
        We have $\Delta_1(\mu_0) = \mu_0\fotimes 1 = 1\fotimes \mu_0$ and
        \begin{align*}
          \Delta_1\left(U_{k,l}\right) 
          &= U_{k,l}\fotimes 1 + 1\fotimes U_{k,l} + Q_l\fotimes Q_k, 
          &&\text{(for $k<l$)}\\
          \Delta_1\left(X_{k,l}\right) 
          &= X_{k,l}\fotimes 1 + 1\fotimes X_{k,l} + Q_l\fotimes Q_k, \\
          \Delta_1\left(\mu_0X_{k,l}\right) 
          &= \mu_0X_{k,l}\fotimes 1 + 1\fotimes \mu_0X_{k,l}.
        \end{align*}
        Then, for example, 
        \begin{align*}
          (\id\fotimes\Delta_1)\Delta_1\left(U_{k,l}\right)
          &= (\id\otimes\Delta_1)\left(U_{k,l}\fotimes 1 + 1\fotimes U_{k,l} + Q_l\fotimes Q_k\right) \\
          &= U_{k,l}\fotimes 1\fotimes 1
          + 1 \fotimes U_{k,l}\fotimes 1
          + 1 \fotimes 1 \fotimes U_{k,l} \\
          &\qquad+ 1\fotimes Q_l\fotimes Q_k
          + Q_l\fotimes 1\fotimes Q_k
          + Q_l\fotimes Q_k\fotimes 1 \\
          &= (\Delta_1\fotimes\id)\Delta_1\left(U_{k,l}\right).
        \end{align*}
        We leave the remaining cases to the reader.
      \end{proof}

      Our $\Delta_1$ fails to be left-linear or symmetric;
      as in \cite[14.1]{baues} that failure is captured by 
      the {\em left action operator} $L$
      and the {\em symmetry operator} $S$ 
      as defined in the following Lemma.
      \begin{lemma}\label{elscomp}
        For $e\in E_1$ and $a\in A$ one has
        $$\Delta_1(ae) = a\Delta_1(e) 
        + \iota_\sharp\left(\kappa(a)L(\partial e)\right),
        \quad
        T\Delta_1(e) = \Delta_1(e) + \iota_\sharp\left(S(\partial e)\right)$$
        with $S(r) = (1+T)\coz(r)$
        where $T:A\otimes A\rightarrow A\otimes A$ is the twist map.
      \end{lemma}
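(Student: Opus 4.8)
The plan is to verify both identities on the two summands of the right-linear decomposition $E_1 = \iota(A)\oplus u(R_E)$ recorded just before \myref{deltaonee}; since every term in the two asserted formulas is additive in $e$, this reduction is harmless. On the summand $\iota(A)$ everything is immediate. For $e = \iota(b)$ one has $\partial e = 0$, so $L(\partial e)$ and $S(\partial e)$ vanish, and the left action on the kernel is $a\cdot\iota(b) = \iota(ab)$; combining $\Delta_1\iota = \iota_\sharp\Delta$ with the multiplicativity of $\Delta$ gives $\Delta_1(a\iota b) = \iota_\sharp(\Delta a\cdot\Delta b) = a\Delta_1(\iota b)$, which is the left-action formula, while $T\iota_\sharp(\Delta b) = \iota_\sharp(\Delta b)$ because the coproduct of $A$ is cocommutative, which is the symmetry formula.

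For the left-action defect on $e = u(r)$ I would expand both sides through the multiplication maps. Using $a\cdot u(r) = u(ar) + \iota(\op(a,r))$ and \myref{deltaonee},
\begin{align*}
\Delta_1(au(r)) &= u_\sharp(\Delta_0(ar)) + \iota_\sharp(\coz(ar)) + \iota_\sharp(\Delta\op(a,r)),
\end{align*}
whereas $a\cdot u_\sharp(\Delta_0 r) = u_\sharp(\Delta_0(ar)) + \iota_\sharp(\op_\sharp(a,\Delta_0 r))$ and $a\cdot\iota_\sharp(\coz r) = \iota_\sharp(\Delta(a)\coz r)$ give
\begin{align*}
a\Delta_1(u(r)) &= u_\sharp(\Delta_0(ar)) + \iota_\sharp(\op_\sharp(a,\Delta_0 r)) + \iota_\sharp(\Delta(a)\coz r).
\end{align*}
The $u_\sharp$ terms cancel, leaving $\iota_\sharp$ of $\coz(ar) - \Delta(a)\coz r + \Delta\op(a,r) - \op_\sharp(a,\Delta_0 r)$. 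I then feed in \myref{nablalindef}, split it according to $\nabla = \coz + \mu_0 L$, and compute the left action on the $\mu_0 L(r)$ part using $a\mu_0 = \mu_0 a + \kappa(a)$: pushing $a$ past $\mu_0$ splits off a $\mu_0$-free summand, which is exactly the term written $\kappa(a)L(r)$, while the genuine $\mu_0$-part yields $L(ar) = \Delta(a)L(r)$. Hence the non-$\mu_0$ component of \myref{nablalindef} reads $\coz(ar)-\Delta(a)\coz r = \Delta\op(a,r) + \op_\sharp(a,\Delta_0 r) + \kappa(a)L(r)$. Substituting, the $\op_\sharp$ terms cancel and the two copies of $\Delta\op(a,r)$ combine to $2\Delta\op(a,r) = 0$ since $\iota_\sharp$ lands in an $\FF$-module; what survives is precisely $\iota_\sharp(\kappa(a)L(\partial e))$.

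For the symmetry defect on $e = u(r)$ I would use that $\Delta_0 r$ is $T$-invariant for every $r\in R_E$ — each generator $Y_{k,l}, X_{k,l}, \mu_0X_{k,l}$ is primitive and the $\Sq(R)$-coproduct is cocommutative — together with $Tu_\sharp = u_\sharp T$, which holds because the twist interchanges $u_\sharp(r\otimes m)=u(r)\fotimes m$ and $u_\sharp(m\otimes r)=m\fotimes u(r)$. Thus $Tu_\sharp(\Delta_0 r) = u_\sharp(\Delta_0 r)$, and applying $T$ to \myref{deltaonee} gives $T\Delta_1(u(r)) = u_\sharp(\Delta_0 r) + \iota_\sharp(T\coz r)$. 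Subtracting $\Delta_1(u(r))$ and using $-1=1$ yields $\iota_\sharp((1+T)\coz r) = \iota_\sharp(S(\partial e))$, as required.

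The two reductions are routine; the one genuinely delicate step is the $\mu_0$-bookkeeping in the left-action computation. The point is that the left module structure on the folding product treats the kernel summand $\iota_\sharp(A\otimes A)$ cleanly, the action factoring through $\pi\otimes\pi$, but forces a $\kappa$-correction as soon as $a$ is commuted past the $\mu_0$ sitting in the $\mu_0 L(r)$ part of $\nabla(r)$. Isolating that correction and matching it against the left-linearity defect \myref{nablalindef} of $\nabla$ is where care is needed, and the final cancellation of the $\mu_0$-free terms relies essentially on working at the prime $2$.
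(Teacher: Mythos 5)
Your proposal is correct and follows essentially the same route as the paper: the core computation expands $\Delta_1(au(r))$ and $a\Delta_1(u(r))$ via the multiplication maps $\op$ and $\op_\sharp$ and then cancels the defect against \myref{nablalindef} from Lemma \ref{Phidef}, exactly as in the paper's proof. The only differences are expository: you spell out the $\iota(A)$ summand, the $\mu_0$-bookkeeping (including the left-linearity $L(ar)=aL(r)$ implicit in the fact that \myref{nablalindef} has no $\mu_0$-component), and the $T$-invariance argument behind the symmetry claim, all of which the paper compresses into ``obvious from the definition.''
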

      \begin{proof}
        That $S(r) = (1+T)\coz(r)$ is obvious from the definition.
        For the left-linearity defect one computes
        \begin{align*}
          \Delta_1\left(a\cdot u(r)\right) &=
          \Delta_1\left(u(ar) + \iota\left(\op(a,r)\right)\right) \\
          &= u_\sharp\left(\Delta_0(ar)\right) 
          + \iota_\sharp\left(\coz(ar) + \Delta\op(a,r)\right), \\
          a\cdot \Delta_1\left(u(r)\right) &=
          a\cdot\left(u_\sharp\left(\Delta_0(r)\right) 
          + \iota_\sharp\left(\coz(r)\right)\right)
          \\
          &= u_\sharp\left(a\cdot \Delta_0(r)\right) 
          + \iota_\sharp\left(\op_\sharp\left(a,\Delta_0(r)\right) 
          + a\cdot \coz(r)\right). 
        \end{align*}
        Therefore $\Delta_1\left(au(r)\right) - a\Delta_1\left(u(r)\right)$
        is
        \begin{gather*}
          \iota_\sharp\left(\coz(ar)-a\coz(r)
          +\Delta\op(a,r)-\op_\sharp\left(a,\Delta_0(r)\right)\right)
          \intertext{which by Lemma \ref{Phidef} is}
          \iota_\sharp\left(\coz(ar)-a\coz(r)+\nabla(ar)-a\nabla(r)\right) 
          = \iota_\sharp\left(\kappa(a)L(r)\right).
        \end{gather*}
      \end{proof}
      Note that in Baues's book $L$ was originally defined as a certain map
      $L:A\otimes R\rightarrow A\otimes A$.
      However, it was shown in \cite[12.7]{jiblmpi} 
      that $L(a\otimes r) = \kappa(a)L(\Sq^1\otimes r)$,
      so our $L(r)$ corresponds to $L(\Sq^1\otimes a)$ in \cite{baues}.   
    \end{subsection}
    
    \begin{subsection}{Proof of 
        \texorpdfstring{$B_\bullet\sim E_\bullet$}{the equivalence}}
      We are now very close to establishing the weak equivalence
      between $E_\bullet$ and the secondary Steenrod algebra $B_\bullet$.
      Recall that $B_0$ is the free 
      associative algebra over $\ZZ/4$ on
      the $\Sq^k$ with $k>0$. 
      Let $\cmap_0:B_0\rightarrow E_0$ be the multiplicative 
      map with $B_0\owns\Sq^n\mapsto\Sq^n\in D_0$.
      It's easily checked that $\cmap_0$ is also comultiplicative.
      
      Let $\cmap_0\os E_1$ be defined as the pullback of $E_1\rightarrow E_0$
      along $\cmap_0$. We then have a commutative diagram
      \begin{align*}
        \xymatrix{
          A \ar@{ >->}[r] & E_1 \ar[r] & E_0 \ar@{ ->>}[r] & A \\
          A\ar@{=}[u] \ar@{ >->}[r] & \cmap_0\os E_1 \ar[r]\ar[u]^-{\cmap_1}
          & B_0 \ar@{ ->>}[r]\ar[u]^-{\cmap_0} & A\ar@{=}[u] \\           
        }
      \end{align*}
      that defines a new sequence $\cmap\os E_\bullet$
      together with a weak equivalence 
      to $E_\bullet$.
      %$(\cmap\os E)_\bullet \xrightarrow{\sim} E_\bullet$.
      We will prove that $\cmap\os E_\bullet \cong B_\bullet$.

      \begin{lemma}
        $\cmap\os E$ inherits a secondary Hopf algebra structure from $E_\bullet$
        such that the map $\cmap\os E_\bullet \rightarrow E_\bullet$
        is a map of secondary Hopf algebras.
      \end{lemma}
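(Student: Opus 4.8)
The plan is to \emph{pull back} the entire secondary Hopf structure of $E_\bullet$ along the comultiplicative map $\cmap_0$. In degree $0$ there is nothing new to do: $B_0$ is the free associative $\ZZ/4$-algebra on the $\Sq^k$, so it carries the coproduct $\Delta_0^B$ determined by $\Sq^n\mapsto\sum_{i+j=n}\Sq^i\otimes\Sq^j$, and we have already noted that $\cmap_0$ is a map of Hopf algebras, i.e.\ $(\cmap_0\otimes\cmap_0)\Delta_0^B=\Delta_0\,\cmap_0$. Write $\cmap_\bullet:\cmap\os E_\bullet\to E_\bullet$ for the weak equivalence above, so that $\cmap_0\os E_1=E_1\times_{E_0}B_0$ and $\partial^B$ is the right projection onto $R_B=\ker(B_0\to A)$.

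First I would record that $\cmap\os E_\bullet$ is again a $[p]$-algebra in the sense of \cite[12.1.2]{baues}. This is where the freeness of $B_0$ enters: since $\cmap\os E_0=B_0$ is $\GG$-free, the obstruction that forced the reduction $\tilde M_\bullet$ for $E_\bullet$ itself disappears, and the folding product $(\cmap\os E\fotimes\cmap\os E)_\bullet$ is available directly. I would also transport the splitting: $u^B(r)=(u(\cmap_0 r),r)$ is a right-linear section of $\partial^B$ because $u$ and $\cmap_0$ are, giving a decomposition $\cmap_0\os E_1=\iota(A)\oplus u^B(R_B)$.

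The key step is to identify the folding product of the pullback with the pullback of the folding product,
$$(\cmap\os E\fotimes\cmap\os E)_\bullet\;\cong\;(\cmap_0\otimes\cmap_0)\os(E\fotimes E)_\bullet .$$
Both sides fit in a short exact sequence with kernel $A\otimes A$ and cokernel the relation module $R_{B\otimes B}=\ker(B_0\otimes B_0\to A\otimes A)$; indeed $(\cmap_0\otimes\cmap_0)^{-1}(R_{E\fotimes E})=R_{B\otimes B}$ since $R_{E\fotimes E}=\ker(E_0\otimes E_0\to A\otimes A)$. Functoriality of $\fotimes$ applied to $\cmap_\bullet$ yields a map between the two sequences which is the identity on kernel and cokernel, so the five-lemma makes it an isomorphism. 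With this identification the universal property of the pullback produces a unique $\Delta_\bullet^{\cmap}:\cmap\os E_\bullet\to(\cmap\os E\fotimes\cmap\os E)_\bullet$ lying over $\Delta_\bullet\circ\cmap_\bullet$ and over $\Delta_0^B\circ\partial^B$; explicitly it is given by the analogue of \eqref{deltaonee}, with $\coz$ replaced by $\coz^B=\coz\circ\cmap_0$ and $u,u_\sharp$ by $u^B,u^B_\sharp$.

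Finally I would verify the secondary Hopf algebra axioms. By construction $\cmap_\bullet\fotimes\cmap_\bullet$ carries $\Delta_\bullet^{\cmap}$ to $\Delta_\bullet$, so the counit and coassociativity identities, together with the derived left-action and symmetry operators $L,S$ of Lemma \ref{elscomp}, are simply the images under the isomorphism above of the corresponding identities for $E_\bullet$; equivalently, since every ingredient $\Delta_0^B$, $\coz^B$, $u^B$ is pulled back through the multiplicative and comultiplicative $\cmap_0$, the checks are the same computations already performed for $E_\bullet$. That $\cmap_\bullet$ is a map of secondary Hopf algebras is then immediate from the defining property of $\Delta_\bullet^{\cmap}$. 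I expect the main obstacle to be the compatibility of the folding product with the pullback: it rests on a careful reading of the construction of $(M\fotimes N)_1$ in \cite[Ch.\ 12]{baues} to confirm that it is natural enough for the five-lemma argument, and on checking that $\cmap\os E_\bullet$ genuinely meets the $[p]$-algebra hypotheses so that the folding product is even defined.
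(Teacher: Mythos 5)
Your proposal is correct and takes essentially the same route as the paper: the paper's proof likewise defines the secondary diagonal on $\cmap\os E_\bullet$ by transporting \eqref{deltaonee} through the splitting $(\cmap\os E\fotimes \cmap\os E)_1 = \iota'_\sharp\left(A\otimes A\right)\oplus u'_\sharp\left(R_{B\otimes B}\right)$, with $\coz$ replaced by $\coz\circ\cmap_0$, and leaves the remaining verifications to the reader. Your five-lemma identification of $(\cmap\os E\fotimes\cmap\os E)_\bullet$ with $(\cmap_0\otimes\cmap_0)\os(E\fotimes E)_\bullet$, the transported section $u^B$, and the componentwise check of the Hopf axioms are exactly those omitted details, worked out correctly.
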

      \begin{proof}
        Indeed, using the splitting $(\cmap\os E\fotimes \cmap\os E)_1 
        = \iota'_\sharp\left(A\otimes A\right) \oplus 
        u'_\sharp\left(R_{B\otimes B}\right)$
        we can transport the definition \myref{deltaonee} to
        \begin{align*}
          \Delta_1\left(\iota'(a)\right) &= \iota'_\sharp\left(\Delta(a)\right) 
          ,\quad
          \Delta_1\left(u'(r)\right) = u'_\sharp\left(\Delta_0(r)\right) 
          + \iota'_\sharp\left(\coz(\cmap_0(r))\right).
        \end{align*}
        We leave the details to the reader.
      \end{proof}

      Note that the left action and symmetry operators
      of $\cmap\os E_\bullet$ 
      are given by $L'=L\circ\cmap_0$ and $S'=S\circ\cmap_0$.
      The following Lemma therefore shows that these agree
      with the operators from the secondary 
      Steenrod algebra.
      \begin{lemma}\label{lsbcomp}
        Decompose 
        $\nabla\cmap_0\vert_{R_B}:R_B\rightarrow A\otimes A\,\oplus\,\mu_0 A\otimes A$
        as 
        \begin{align*}
          \nabla\left(\cmap_0(r)\right) 
          &= \coz(r) + \mu_0L(r)\quad\text{with $\coz$, $L:R_B\rightarrow A\otimes A$.}
        \end{align*}
        Then
        $r\mapsto L(r)$ resp.\ $r\mapsto (1+T)\coz(r)$ 
        coincide with the left-action resp.\ symmetry operator of $B_\bullet$.
      \end{lemma}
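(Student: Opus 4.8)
The plan is to compare the two pairs of operators by reducing to a generating set of $R_B$ and matching the outcome against the values Baues records for $B_\bullet$. By Lemma \ref{elscomp}, transported to $\cmap\os E_\bullet$ along $\cmap_0$, the left-action operator of $\cmap\os E_\bullet$ is $r\mapsto L(r)$ and its symmetry operator is $r\mapsto(1+T)\coz(r)$, where $\nabla(\cmap_0(r))=\coz(r)+\mu_0L(r)$ is the decomposition from Lemma \ref{Phidef}. Both $\coz\circ\cmap_0$ and $L\circ\cmap_0$ are right-$A$-linear, and by the identity $L(a\otimes r)=\kappa(a)L(\Sq^1\otimes r)$ of \cite[12.7]{jiblmpi} the left $A$-action is already accounted for; hence it suffices to compare the two models on generators of $R_B$ as a bimodule, namely the element $2$ and the Adem relations $\Sq^a\Sq^b-\sum_c\binom{b-c-1}{a-2c}\Sq^{a+b-c}\Sq^c$ for $a<2b$.

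The element $2$ is handled at once: $\cmap_0(2)=2\in 2D_0$, on which $\nabla$ vanishes by Lemma \ref{Phidef}, so $\coz(2)=L(2)=0$; on the Baues side $u'(2)=\mu_0$ and $\Delta_1(\mu_0)=\mu_0\fotimes 1=1\fotimes\mu_0$ is both left-linear and symmetric, so its operators vanish there as well. For an Adem relation $r$ I would compute $\cmap_0(r)\in R_E$ by carrying out the associated product in $E_0$ exactly as tabulated in Figure \ref{ademtab}, rewrite the result in the right-$A$-basis $\{2,Z_k,Y_{k,l},X_{k,l},\mu_0X_{k,l}\}$ of $R_E$ supplied by Lemma \ref{Phidef}, and apply $\nabla$. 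Each $X_{k,l}$ or $Y_{k,l}$ with $0\le k<l$ then contributes a summand $Q_l\otimes Q_k$ to $\coz(\cmap_0(r))$, each $\mu_0X_{k,l}$ contributes $Q_l\otimes Q_k$ to $L(\cmap_0(r))$, while the $2D_0$- and $Z_k$-components drop out. This yields closed expressions for $\coz$ and $L$, and hence for $S=(1+T)\coz$, in terms of the Milnor primitives.

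The crux, and the step I expect to be the main obstacle, is to identify these expressions with the left-action and symmetry operators that Baues attaches to $B_\bullet$ in \cite[14.1]{baues}. The defining assignment $\nabla X_{k,l}=Q_l\otimes Q_k$ was chosen precisely to reproduce the commutator pattern $Q_k\otimes Q_l+Q_l\otimes Q_k$ of the primitives, so that, once both are written in Milnor's basis, $S=(1+T)\coz$ should match Baues's symmetry operator and $L$ his left-action operator termwise. Concretely this means comparing, degree by degree, the coefficients extracted above with Baues's computed values on the corresponding Adem relations; since $R_B$ has only finitely many generators in each topological degree, this is a finite verification per degree rather than a single closed argument. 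The only genuine difficulty is translating Baues's normalization of $L$ and $S$ into the present conventions and tracking the resulting signs and twists, which is routine bookkeeping and may be left to the reader.
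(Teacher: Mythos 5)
Your reduction to the generators $2$ and the Adem relations is sound and matches the paper's setup (right-linearity plus the uniqueness statement of \cite[12.7]{jiblmpi} resp.\ \cite[14.4.3]{baues}, with bilinearity of the extracted $L$ coming from Lemma \ref{Phidef}), but the step you defer as ``routine bookkeeping'' is the entire mathematical content of the lemma, and the way you propose to discharge it would fail. Baues does not supply a table of computed values to be matched degree by degree; he supplies \emph{closed formulas} --- the left action is the unique bilinear $L$ with $L([n,m])=L_R(\langle n,m\rangle)$, where $L_R(\Sq^n\otimes\Sq^m)=\sum \Sq^{n_1}\Sq^{m_1}\otimes\Sq^{n_2}\Sq^{m_2}$ (sum over $n_1+n_2=n$, $m_1+m_2=m$ with $m_1,n_2$ odd), and $S([n,m])=(1+T)K[n,m]$. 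Since there are infinitely many relations $[n,m]$, a ``finite verification per degree'' never proves the statement for all of $R_B$; and Figure \ref{ademtab} is only a low-degree table, not a general formula. What is actually needed, and what the paper does, is a closed-form computation: from $\Sq^n\ast\Sq^m=\Sq^n\Sq^m+X_0\Sq^{n-1}X_0\Sq^{m-1}\mu_0+X_{-1,0}\Sq^{n-1}\Sq^{m-1}$ one reads off that the $\mu_0$-component of $\nabla\cmap_0$ on a product is $\nabla\left(X_0\Sq^{n-1}X_0\Sq^{m-1}\right)=\left((1\otimes Q_0)\Sq^{n-1}\right)\cdot\left((Q_0\otimes 1)\Sq^{m-1}\right)$, which expands \emph{exactly} to $L_R(\Sq^n\otimes\Sq^m)$. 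Your generic recipe ``each $X_{k,l}$ contributes $Q_l\otimes Q_k$'' is correct as far as it goes, but without this product formula it yields no identification with $L_R$.

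For the symmetry operator the gap is worse, because the identification is not a matter of normalizations and twists. One must move $\mu_0$ to the right, writing $\nabla\cmap_0(r)=L(r)\mu_0+\tilde\coz(r)$ with $\tilde\coz=\kappa\circ L+\coz$, then prove $\tilde\coz(\Sq^n\Sq^m)=0$ --- which requires the nontrivial observation that $\Sq^n\Sq^m\in D_0$ has no $Y_{k,l}$-component for $0\le k,l$, read off from the coproduct of $\xi_{n,m}$ --- to conclude $S([n,m])=(1+T)L(\kappa([n,m]))$. Matching this against $(1+T)K[n,m]$ then uses the contraction identities $\cont(\xi_2,\Lambda_{n,m})=0$ and $\cont(\xi_1^2,\cont(\xi_2,\Lambda_{n,m}))=0$, which hold precisely because $\Lambda_{n,m}\in F^1$. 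Finally, your reduction to generators is incomplete for $S$: unlike $L$, the symmetry operator is characterized in \cite[14.5.2]{baues} together with its transformation law $S(ar)=aS(r)+(1+T)\kappa(a)L(r)$, and this must be verified for the extracted $S$; the paper obtains it from the fact (Lemma \ref{Phidef}) that the left-linearity defect of $\nabla$ is symmetric, so that $(1+T)\nabla=S+\mu_0(1+T)L$ is bilinear. None of these three steps is bookkeeping, and your proposal as written contains none of them.
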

      \begin{proof}
        For $0<n<2m$ let $[n,m]\in R_B$ denote the Adem relation 
        \begin{align*}
          \underbrace{\Sq^n\otimes \Sq^m + 
            \sum_{1\le k \le \frac{n}2} \binom{m-k-1}{n-2k} \Sq^{m+n-k}\otimes \Sq^k}_{= \langle n, m\rangle}
          + \underbrace{\vphantom{\sum_{1\le k \le \frac{n}2}}\binom{m-1}{n} \Sq^{m+n}}_{= \Lambda_{n,m}}.
        \end{align*}
        Together with $2\in R_B$ the $[n,m]$ generate $R_B$ as a $B_0$-bimodule.
        We let $F^1=\ZZ/2\{\Sq^n\vert n\ge 1\}$, so 
        $\langle n, m\rangle\in F^1\otimes F^1$ and $\Lambda_{n,m}\in F^1$.

        According to \cite[12.7]{jiblmpi} or \cite[14.4.3]{baues}
        the left action map is the unique 
        bilinear $L:R_B\rightarrow A\otimes A$
        with 
        $L([n,m]) = L_R\left(\langle n, m\rangle\right)$ 
        where $L_R:F^1\otimes F^1\rightarrow A\otimes A$
        is given by
        \begin{align*}
          L_R\left(\Sq^n\otimes\Sq^m\right) &=  \sum_{\substack{n_1+n_2=n\\m_1+m_2=m\\m_1,n_2\,\text{odd}}}
          \Sq^{n_1}\Sq^{m_1}\otimes\Sq^{n_2}\Sq^{m_2}.
        \end{align*}
        Lemma \ref{Phidef} proves that the $L$ that we extracted from $\nabla$
        is also bilinear, so we only have to verify that it gives the right value
        on the Adem relations.
        We now compute
        \begin{align}\label{sqsq1}
          \nonumber
          \Sq^n\ast\Sq^m 
          &= \Sq^n\Sq^m + \psi(\Sq^n)\psi(Sq^m)\mu_0 + X_{-1}\psi(\Sq^n)\kappa(\Sq^{m}) \\
          &= \Sq^n\Sq^m + X_0\Sq^{n-1}X_0\Sq^{m-1}\mu_0 + X_{-1,0}\Sq^{n-1}\Sq^{m-1}.
        \end{align}
        For the $\mu_0$-component we then find
        \begin{align*}
          \nabla(X_0\Sq^{n-1}&X_0\Sq^{m-1})
          = \left((1\otimes Q_0)\Sq^{n-1}\right)\cdot\left((Q_0\otimes 1)\Sq^{m-1}\right)\\
          &= \Big(\sum_{\substack{n_1+n_2=n,\\\text{$n_2$ odd}}}\Sq^{n_1}\otimes\Sq^{n_2}\Big)
          \cdot
          \Big(\sum_{\substack{m_1+m_2=m,\\\text{$m_1$ odd}}}\Sq^{m_1}\otimes\Sq^{m_2}\Big)
        \end{align*}
        as claimed.

        The identification of $S = (1+T)\coz$ with the symmetry operator 
        proceeds similarly. We first evaluate $S([n,m])$. 
        Moving $\mu_0$ to the right gives 
        $$\nabla(\cmap_0(r)) = \mu_0L(r) + \coz(r) = L(r)\mu_0 + 
        \underbrace{\kappa(L(r))+\coz(r)}_{=:\tilde\coz(r)}.$$
        We claim that $\Sq^n\Sq^m\in D_0$ does not have any $Y_{k,l}$-component
        with $0\le k,l$. Indeed, from the coproduct formula in $D_0$ we find
        $$\text{$\Delta \xi_{n,m} \equiv \xi_n\xi_m\otimes\xi_1$
          mod $\xi_{k,l}\otimes 1$, $1\otimes\xi_{k,l}$, 
          $1\otimes\xi_j$ with $j\ge 2$}.$$
        From \myref{sqsq1} we then find
        $$\tilde\coz(\Sq^n\Sq^m) 
        = \nabla\Sq^n\Sq^m + \nabla X_{-1,0}\Sq^{n-1}\Sq^{m-1} = 0.$$
        It follows that 
        $S([n,m])=(1+T)\kappa\left(L([n,m])\right)
        =(1+T)L\left(\kappa([n,m])\right)$.
        We still need to show that this is the expected outcome.
        Let $\langle n,m\rangle = \sum_i\Sq^{n_i}\otimes\Sq^{m_i}$.
        Expanding slightly on the computation above, we see that
        \begin{align*}
          L([n,m]) 
          &= \nabla\left(X_{0,0}\Sq^{n_i-1}\Sq^{m_i-1}+X_{0,1}\Sq^{n_i-3}\Sq^{m_i-1}\right).
          \intertext{Therefore}
          (1+T)L(\kappa([n,m])) 
          &= (1+T)\nabla X_{0,1}\left(\Sq^{n_i-4}\Sq^{m_i-1}+\Sq^{n_i-3}\Sq^{m_i-2}\right)
        \end{align*}
        where we have ignored the $X_{0,0}(\cdots)$ because $(1+T)\nabla X_{0,0}=0$.
        Since $\Lambda_{n,m}$ $=$ $\sum_i\Sq^{n_i}\Sq^{m_i}$ $\in$ $F^1$ we have
        \begin{align*}
          0 &= \cont(\xi_2,\sum_i\Sq^{n_i}\Sq^{m_i}) = \sum_i\Sq^{n_i-2}\Sq^{m_i-1}, \\
          0 &= \cont(\xi_1^2,\cont(\xi_2,\sum_i\Sq^{n_i}\Sq^{m_i})) = 
          \sum_i \left(\Sq^{n_i-4}\Sq^{m_i-1} + \Sq^{n_i-2}\Sq^{m_i-3}\right).
        \end{align*}
        We finally arrive at
        \begin{align*}
          (1+T)L(\kappa([n,m]))
          &= (1+T)\nabla X_{0,1}\left(\Sq^{n_i-2}\Sq^{m_i-3}+\Sq^{n_i-3}\Sq^{m_i-2}\right).
        \end{align*}
        In the notation of the remark following \cite[16.2.3]{baues} 
        this is just $(1+T)K[n,m]$
        where it is also affirmed that this is 
        the correct value for $S\left([n,m]\right)$. 

        The proof of the Lemma will be complete, once we have verified that
        $S$ has the right linearity properties. From Lemma \ref{Phidef}
        we see that the linearity defect of $\nabla$ is symmetrical;
        therefore $(1+T)\nabla = S + \mu_0(1+T)L$
        is actually bilinear.
        For $S$ this translates into 
        \begin{align*}
          S(ra) &= S(r)a, \quad S(ar) = aS(r) + (1+T)\kappa(a)L(r).
        \end{align*}
        This agrees with the characterization in \cite[14.5.2]{baues}.
      \end{proof}
      
      \begin{coro}
        There is an isomorphism $\cmap\os E_\bullet\cong B_\bullet$.
      \end{coro}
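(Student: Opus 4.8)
The plan is to deduce this from Baues's Uniqueness Theorem \cite[15.3.13]{baues}, which pins down $B_\bullet$ up to isomorphism of secondary Hopf algebras once one knows its degree-zero Hopf algebra together with the left-action operator $L$ and the symmetry operator $S$. Essentially everything needed has been arranged in the preceding lemmas, so the proof should reduce to checking that $\cmap\os E_\bullet$ is an admissible input to that theorem carrying the same invariants as $B_\bullet$.

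First I would record that $\cmap\os E_\bullet$ is a secondary Hopf algebra by the Lemma just above, and that its underlying exact sequence has $\pi_0 = \pi_1 = A$, matching $B_\bullet$. By construction $(\cmap\os E)_0 = B_0$ is the free associative $\ZZ/4$-algebra on the $\Sq^k$, and since $\cmap_0$ is simultaneously multiplicative and comultiplicative with $\Sq^n\mapsto\Sq^n$, it identifies this degree-zero part with Baues's $B_0$ as a Hopf algebra over $\ZZ/4$, compatibly with the augmentations and with the projections onto $A$. Thus the two secondary Hopf algebras share the same degree-zero Hopf algebra and the same homotopy groups.

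Next I would match the higher invariants. By Lemma \ref{lsbcomp} the operators $L' = L\circ\cmap_0$ and $S' = S\circ\cmap_0$ extracted from $\nabla$ coincide with the left-action and symmetry operators that Baues associates to $B_\bullet$. Feeding these into \cite[15.3.13]{baues} then produces an isomorphism $\cmap\os E_\bullet\cong B_\bullet$ of secondary Hopf algebras that is the identity on $\pi_0$ and $\pi_1$, which is exactly the assertion. Composing with the weak equivalences $\cmap\os E_\bullet\to E_\bullet$ and $\nproj:E_\bullet\to D_\bullet$ would simultaneously yield the main Theorem \ref{mainthm}.

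The hard part will be bookkeeping rather than any new idea: one must ensure the normalization conventions line up, in particular that our $L,S:R\to A\otimes A$ correspond to Baues's maps out of $A\otimes R$ via the identity $L(a\otimes r)=\kappa(a)L(\Sq^1\otimes r)$ of \cite[12.7]{jiblmpi} noted after Lemma \ref{elscomp}, and that the hypotheses of the Uniqueness Theorem (for instance the $[p]$-algebra and $\GG$-freeness conditions, which here are handled through the reduction $\tilde M_\bullet$ discussed before Lemma \ref{opcomp}) are genuinely met for $\cmap\os E_\bullet$. Once these conventions are reconciled, the isomorphism is forced with no further computation required.
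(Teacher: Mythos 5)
Your proposal matches the paper's argument: the paper's proof of this corollary is simply to invoke Baues's Uniqueness Theorem \cite[15.3.13]{baues}, all the required input (the secondary Hopf algebra structure on $\cmap\os E_\bullet$ and the identification of $L$ and $S$ with Baues's operators via Lemma \ref{lsbcomp}) having been established in the immediately preceding lemmas, exactly as you describe. Your additional remarks on normalization conventions and the $[p]$-algebra hypotheses are sensible bookkeeping that the paper handles in the same places you cite, so there is no substantive difference in approach.
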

      \begin{proof}
        Apply the Uniqueness Theorem \cite[15.3.13]{baues}. 
      \end{proof}
      This also proves Theorem
      \ref{mainthm} since we have by construction 
      a chain of weak equivalences
      $\cmap\os E_\bullet\xrightarrow{\sim} E_\bullet \xrightarrow{\sim} D_\bullet$.

      \begin{remark}
        The map $S:R_E\rightarrow A\otimes A$ does not factor
        through the projection $R_E\rightarrow R_D$.
        This can be seen from
        the computation
        \begin{align*}
          [3,2] &= 2 \Sq(2,1) + 2 \Sq(5) + (X_{-1,0}+Y_{-1,0}) (\Sq(0,1) + \Sq(3)) \\
          &+ X_{0,0} \Sq(2) + X_{0,1}
          + \mu_0 X_{0,0} (\Sq(0,1) + \Sq(3)) + \mu_0 X_{0,1} \Sq(1),
          \\
          [2,2]\Sq^1 &= 2 \Sq(2,1) + 2 \Sq(5) 
          + (X_{-1,0}+Y_{-1,0}) (\Sq(0,1) + \Sq(3)) \\
          &+ \mu_0 X_{0,0} (\Sq(0,1) + \Sq(3)) + \mu_0 X_{0,1} \Sq(1).
        \end{align*}
        One finds that $S([3,2]) = Q_1\otimes Q_0+ Q_0\otimes Q_1$
        and $S([2,2]\Sq^1) = 0$
        even though $[3,2]$ and $[2,2]\Sq^1$ have the same image in $D_0$.
        This shows that the secondary diagonal 
        $\Delta_1:B_1\rightarrow (B\fotimes B)_1$
        has no analogue over $D_\bullet$.
      \end{remark}

    \end{subsection}
  \end{section}

%%%%%%%%%%%%%%%%%%%%%%%%%%%%%%%%%%%%%%%%%%%%%%%%%%%%%%%%%%%%%%%%%%%%%%%%%%%%%%
%%%%%%%%%%%%%%%%%%%%%%%%%%%%%%%%%%%%%%%%%%%%%%%%%%%%%%%%%%%%%%%%%%%%%%%%%%%%%%
%%%%%%%%%%%%%%%%%%%%%%%%%%%%%%%%%%%%%%%%%%%%%%%%%%%%%%%%%%%%%%%%%%%%%%%%%%%%%%
%%%%%%%%%%%%%%%%%%%%%%%%%%%%%%%%%%%%%%%%%%%%%%%%%%%%%%%%%%%%%%%%%%%%%%%%%%%%%%

\newcommand{\ZZp}{{\ZZ_{(p)}}}
\newcommand{\FFp}{{\FF_{p}}}
\newcommand{\degree}[1]{{\vert{#1}\vert}}
  \begin{appendix}
    \begin{section}{\texorpdfstring{$\EBP$}{EBP} and a model at odd primes}
      Let $p$ be a prime
      and let $\BP$ denote the Brown-Peterson spectrum at $p$.
      In this appendix we show how a model 
      of the secondary Steenrod algebra can be extracted from $\BP$ if $p>2$.

      Recall that the homology $H\us\BP$ is the polynomial 
      algebra over $\ZZp$ on generators $(m_k)_{k=1,2,\ldots}$
      and that $\BP\us \subset H\us\BP$ is the subalgebra
      generated by the Araki generators $(v_k)_{k=1,2,\ldots}$.
      Let $\EBP\us = E(\mu_k\,\vert\,k\ge 0)\otimes\BP\us$ 
      with exterior algebra generators $\mu_k$ of degree
      $\degree{\mu_k}=\degree{v_k}+1$.
      $\EBP\us$ is a free $\BP\us$-module and
      defines a Landweber exact homology theory $\EBP$. 
      Obviously, the
      representing spectrum is just a wedge of copies of $\BP$.
      %, with one copy for every monomial $\mu^\epsilon\in E$.
      As usual, we let $I=(v_k)\subset \BP\us$ be the maximal invariant ideal.
      
      The cooperation Hopf algebroid $\EBP\us\EBP$ 
      is very easy to compute:
      \begin{lemma}\label{ebpprops}
        One has 
        $\EBP\us\EBP = E(\mu_k)\otimes_\ZZp \BP\us\BP \otimes_\ZZp E(\tau_k)$
        with 
        \begin{align}\label{taudef}
          \eta_R(\mu_n) &= \sum_{k=0}^n \mu_k t_{n-k}^{p^k} + \tau_n
        \end{align}
        and
        \begin{align*}
          \Delta \tau_n &= 1\otimes \tau_n + \sum_{k=0}^n \tau_k\otimes t_{n-k}^{p^k} 
          + \sum_{0\le a\le n} \mu_a\left(
          - \Delta t_{n-a}^{p^a}
          + \!\!\sum_{b+c=n-a}\!\!t_b^{p^a}\otimes t_c^{p^{a+b}} \right).
        \end{align*}
        The other structure maps are inherited from $\BP\us\BP$.
      \end{lemma}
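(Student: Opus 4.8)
The plan is to treat $(\EBP\us,\EBP\us\EBP)$ as the Hopf algebroid of cooperations of the Landweber exact ring spectrum $\EBP$ and to reduce everything to the known structure of $(\BP\us,\BP\us\BP)$. First I would pin down the additive structure. Since $\EBP\us=E(\mu_k)\otimes\BP\us$ is free, hence flat, over $\BP\us$, the theory $\EBP$ is Landweber exact and its cooperations are computed by the standard formula $\EBP\us\EBP\cong\EBP\us\otimes_{\BP\us}\BP\us\BP\otimes_{\BP\us}\EBP\us$; equivalently, writing $\EBP\simeq\bigvee_S\Sigma^{|\mu_S|}\BP$ as a wedge indexed by the square-free monomials $\mu_S$ in the $\mu_k$, one smashes to obtain $\EBP\us\EBP\cong\bigoplus_{S,T}\Sigma^{|\mu_S|+|\mu_T|}\BP\us\BP$. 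Either way this yields, as a bimodule, $\EBP\us\EBP=E(\mu_k)\otimes_{\ZZp}\BP\us\BP\otimes_{\ZZp}E(\mu_k')$, where the left exterior generators $\mu_k=\eta_L(\mu_k)$ record the source factor and the $\mu_k'=\eta_R(\mu_k)$ the target factor.

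Next I would fix the cooperation generators $\tau_n$. On the subalgebra $\BP\us$ the unit maps are inherited from $\BP\us\BP$, so only $\eta_R(\mu_n)$ needs attention; in the Landweber coordinates above it is simply the image $\mu_n'$ of the right-hand factor. Because $\mu_n$ is the exterior companion of the logarithm coefficient $m_n$ (its degree is $|m_n|+1$, and rationally $\EBP\us=E(\mu_k)\otimes\mathbb{Q}[m_k]$), the natural normalisation is to split off the part that transforms like $m_n$. I therefore set $\tau_n:=\eta_R(\mu_n)-\sum_{k=0}^{n}\eta_L(\mu_k)\,t_{n-k}^{p^k}$, which is exactly \myref{taudef}. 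Since $\tau_n\equiv\mu_n'$ modulo the ideal generated by the $\eta_L(\mu_k)$ and the augmentation ideal of $\BP\us\BP$, the change of generators from $\mu_n'$ to $\tau_n$ is unipotent and respects the exterior structure (one checks $\tau_n^2=0$ directly, using that the $\mu_k$ are odd); hence the $\tau_n$ freely generate the second exterior factor and we obtain $\EBP\us\EBP=E(\mu_k)\otimes\BP\us\BP\otimes E(\tau_k)$.

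The substantive computation is the coproduct. Here I would apply $\Delta$ to the defining identity for $\tau_n$ and use the Hopf algebroid identities $\Delta\eta_R(\mu_n)=1\otimes\eta_R(\mu_n)$ and $\Delta\eta_L(\mu_k)=\eta_L(\mu_k)\otimes1$, the balancing relation $1\otimes\eta_L(a)\gamma=\eta_R(a)\otimes\gamma$ in $\EBP\us\EBP\otimes_{\EBP\us}\EBP\us\EBP$, and the multiplicativity of $\Delta$ on the $t_i$ inherited from $\BP\us\BP$. Moving each $\eta_L(\mu_j)$ across the tensor by the balancing relation and re-expanding the resulting $\eta_R(\mu_j)$ produces the primitive-type terms $1\otimes\tau_n+\sum_{k=0}^n\tau_k\otimes t_{n-k}^{p^k}$ together with a $t$-bilinear remainder $\sum_a\eta_L(\mu_a)\,\Delta_{\mathrm{add}}(t_{n-a})^{p^a}$, where $\Delta_{\mathrm{add}}(t_m)=\sum_{b+c=m}t_b\otimes t_c^{p^b}$; meanwhile the term $-\Delta\big(\sum_k\eta_L(\mu_k)t_{n-k}^{p^k}\big)$ contributes $-\sum_a\eta_L(\mu_a)\,\Delta(t_{n-a})^{p^a}$. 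Adding these gives the stated correction $\sum_{0\le a\le n}\mu_a\big(-\Delta t_{n-a}^{p^a}+\sum_{b+c=n-a}t_b^{p^a}\otimes t_c^{p^{a+b}}\big)$, which is nothing but the coefficientwise defect of the $\BP\us\BP$ coproduct from the additive one. Finally the counit, $\eta_L$, conjugation, and the $\BP\us\BP$-part of $\Delta$ are read off directly from $\BP\us\BP$, with $\epsilon(\tau_n)=0$.

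The main obstacle is the bookkeeping in this coproduct step: one must apply the balancing relation in the correct tensor slot so that each $\mu_a$ ends up as a left-hand coefficient, verify that the odd-degree $\mu_k$ introduce no stray signs (they do not, since the accompanying $t_b^{p^a}\otimes t_c^{p^{a+b}}$ are even), and confirm that the two $t$-bilinear blocks assemble into $(\Delta_{\mathrm{add}}-\Delta)(t_{n-a})$. A secondary point needing care is the very first step: one should check that $\EBP$ is a homotopy commutative ring spectrum with $\EBP\us\EBP$ flat over $\EBP\us$, so that the Landweber cooperation formula and the Hopf algebroid identities are legitimately available; both follow from the flatness of $\EBP\us=E(\mu_k)\otimes\BP\us$ over $\BP\us$ together with the flatness of $\BP\us\BP$.
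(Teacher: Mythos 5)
Your proposal is correct and follows essentially the same route as the paper: the paper likewise takes \eqref{taudef} as the definition of $\tau_n$ and extracts $\Delta\tau_n$ from the identity $(\eta_R\otimes\id)\eta_R(\mu_n)=\Delta\eta_R(\mu_n)$, which is exactly your manipulation of $\Delta\eta_R(\mu_n)=1\otimes\eta_R(\mu_n)$ via the balancing relation, written in one line. Your additional verifications (Landweber exactness giving the bimodule structure, $\tau_n^2=0$, the unipotent change of generators) are details the paper leaves implicit, and they check out.
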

      \begin{proof}
        We use \myref{taudef} to define the 
        $\tau_k\in \EBP\us\EBP = E(\mu_k)\otimes \BP\us\BP\otimes E(\mu_k)$.
        $\Delta\tau_n$ 
        can then be computed from
        $(\eta_R\otimes\id)\eta_R(\mu_n) = \Delta\eta_R(\mu_n)$.
      \end{proof}

      We can put a differential on $\EBP$ by setting $\partial \mu_k = v_k$
      and this turns  $\EBP\us\EBP$ into a 
      differential Hopf algebroid.
      \begin{coro}
        For $p>2$ the homology Hopf algebroid of $\EBP\us\EBP$ 
        with respect to $\partial$ 
        is the dual Steenrod algebra $A\us$.
      \end{coro}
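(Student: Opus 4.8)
The plan is to compute $H_*(\EBP\us\EBP,\partial)$ by first turning all the $\tau_k$ into cycles and then recognising what is left as a Koszul complex. Using $v_0=p$ and $t_0=1$, the definition \myref{taudef} together with $\partial\mu_k=v_k$ yields
\[
  \partial\tau_n = \eta_R(v_n) - \sum_{k=0}^n v_k\,t_{n-k}^{p^k},
\]
whereas $\partial$ vanishes on the polynomial ring $\BP\us\BP=\ZZp[v_k,t_k]$. Thus the only nontrivial part of $\partial$ sits on the generators $\mu_k$ and $\tau_k$, and the whole computation reduces to understanding these two families.

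First I would observe that $\partial\tau_n$ lies in the ideal $I\cdot\BP\us\BP$, where $I=(p,v_1,v_2,\dots)$ is the maximal invariant ideal. The summand $\sum_k v_k t_{n-k}^{p^k}$ is visibly in $I\cdot\BP\us\BP$, and $\eta_R(v_n)\in I\cdot\BP\us\BP$ precisely because $I$ is invariant: $v_n$ maps to $0$ in $\BP\us/I=\FFp$, so $\eta_R(v_n)\equiv\eta_R(0)=0$ modulo $I$. Now $I\cdot\BP\us\BP$ is exactly the set of $\partial$-boundaries that lie in $\BP\us\BP$, since $\partial(\sum_k\mu_k c_k)=\sum_k v_k c_k$ for $c_k\in\BP\us\BP$. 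Writing $\partial\tau_n=\sum_k v_k c_k$ and setting $w_n=\sum_k\mu_k c_k$, the element $\tilde\tau_n=\tau_n-w_n$ satisfies $\partial\tilde\tau_n=0$ because $\partial w_n=\partial\tau_n$. The $\tilde\tau_n$ involve no further $\tau$'s, so together with the $v_k,t_k,\mu_k$ they form a new set of free (and, since $p$ is odd, still exterior) generators of $\EBP\us\EBP$.

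In these coordinates $\partial$ is the derivation with $\partial\mu_k=v_k$, $\partial\tilde\tau_n=0$ and $\partial v_k=\partial t_k=0$, so
\[
  (\EBP\us\EBP,\partial)\;=\;\big(\BP\us\BP\otimes E(\mu_k),\,\partial\big)\otimes_{\ZZp}\big(E(\tilde\tau_k),0\big).
\]
The first factor is the Koszul complex on the sequence $p,v_1,v_2,\dots$, which is regular in the polynomial ring $\BP\us\BP$; its homology is therefore concentrated in degree $0$ and equals $\BP\us\BP/I=\FFp[t_k]$. Since $E(\tilde\tau_k)$ is free over $\ZZp$, the Künneth theorem gives $H_*(\EBP\us\EBP,\partial)\cong\FFp[t_k]\otimes E(\tilde\tau_k)$, with everything finite in each degree.

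It remains to identify this homology Hopf algebroid with $A\us$. The degrees match Milnor's generators, $\degree{t_k}=2(p^k-1)$ and $\degree{\tilde\tau_k}=\degree{\tau_k}=2p^k-1$, and the structure maps descend from $\BP\us\BP$. The quotient $\BP\us\BP/I=\FFp[t_k]$ is the polynomial part $\FFp[\xi_k]$ of the dual Steenrod algebra, and reducing the coproduct of Lemma \ref{ebpprops} modulo $I$ and modulo the boundary terms $\mu_a(\cdots)$ leaves
\[
  \Delta\tau_n = 1\otimes\tau_n + \sum_{k=0}^n \tau_k\otimes t_{n-k}^{p^k},
\]
which is Milnor's diagonal on the exterior generators. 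Because $p$ and the $v_k$ all become boundaries, the ground ring collapses to $\FFp$ and the left and right units agree, so the homology is in fact a Hopf algebra. The main obstacle I anticipate is precisely the bookkeeping of this last step—checking that the multiplication, coproduct, counit and conjugation inherited from $\EBP\us\EBP$ reduce to Milnor's formulas for $A\us$—together with the invariance input $\eta_R(v_n)\in I$ that powers the change of variables.
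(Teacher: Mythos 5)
Your proposal is correct and takes essentially the same route as the paper: both arguments correct the $\tau_n$ to $\partial$-cycles using $\partial\tau_n=\eta_R(v_n)-\sum_{k}v_k\,t_{n-k}^{p^k}\in I\cdot\BP\us\BP$, then read off the homology as the Koszul complex on the regular sequence $p,v_1,v_2,\dots$ tensored with the exterior algebra on the corrected generators, and finally identify the coproduct via Lemma \ref{ebpprops} (your Koszul/K\"unneth spelling-out just makes explicit what the paper leaves implicit, and you are right that $p>2$ enters only in matching Milnor's odd-primary presentation, not in the cycle correction). The one minor divergence: the paper uses the stronger fact $\partial\tau_n\equiv 0$ mod $I^2$ to arrange $\tau_n'\equiv\tau_n$ mod $I$ --- a normalization it reuses later when constructing $T_\bullet$ --- whereas your $\tilde\tau_n$ agrees with $\tau_n$ only modulo the ideal generated by the $\mu_k$, which is harmless for this corollary.
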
 
      \begin{proof}
        We have $\partial \tau_n = \eta_R(v_n)-\sum_{k=0}^n v_k t_{n-k}^{p^k} \equiv 0$ 
        mod $I^2$, so there are $\tau_n'\equiv \tau_n$ mod $I$ with $\partial \tau_n' = 0$.
        Therefore $H\os\left(\EBP\us;\partial\right) = \FFp$
        and 
        \begin{align*}
          H\os\left(\EBP\us\EBP;\partial\right) 
          &= \FFp[t_k\vert k\ge 1]\otimes E(\tau_n'\vert n\ge 0) = A\us.
        \end{align*}
        Lemma \ref{ebpprops} then shows that the induced coproduct on $A\us$
        coincides with the usual one.
      \end{proof}

      We prefer to work with operations rather
      than cooperations.
      Write $E=\EBP\us$, $\Gamma\us = \EBP\us\EBP$
      and let 
      $\Gamma 
      = \Hom_{E}\left(\Gamma\us,E\right)$
      be the operation algebra $\EBP\os\EBP$ of $\EBP$.
      Then $\Gamma$ is a differential algebra and 
      for odd $p$ its homology $H(\Gamma;\partial)$
      can be identified with the Steenrod algebra $A$.
      We therefore get an exact sequence $P_\bullet$
      \begin{align}\label{ebpmod}
        \xymatrix{
          A \ar@{ >->}[r] & \coker\partial \ar@{->}[r]^-{\partial}
          & \ker \partial \ar@{ ->>}[r] & A.
        }
      \end{align}
      by splicing
      $H(\Gamma;\partial)\hookrightarrow \sfrac{\Gamma}{\im\partial}
      \twoheadrightarrow \im\partial$
      and
      $\im\partial\hookrightarrow \ker\partial \twoheadrightarrow H(\Gamma;\partial)$.
      We claim that for odd $p$ this sequence is a 
      model for the secondary Steenrod algebra.
      \begin{thm}\label{appmainthm}
        Let $p>2$ and let $B_\bullet\rightarrow G_\bullet$ 
        be the secondary Steenrod algebra with its canonical augmentation
        to $G_\bullet = (\FF_p\hookrightarrow \FF_p\{1,\mu_0\}
        \rightarrow \ZZp\twoheadrightarrow \FF_p)$.
        Then there is a diagram of crossed algebras 
        \begin{align}\label{ebpzigzag}
          \begin{array}{c}
            \xymatrix{
              P_\bullet \ar@{ ->>}[r] \ar[d]& (P/J^2)_\bullet \ar[d]
              & T_\bullet\ar@{ >->}[l] \ar[d]& B_\bullet \ar[l]\ar[d] \\
              {G^P}_\bullet \ar@{ ->>}[r] & {G^{P/J^2}}_\bullet 
              & {G^T}_\bullet\ar@{ >->}[l] & {G^{\vphantom{P}}}_\bullet \ar[l] 
            }
          \end{array}
        \end{align}
        where all horizontal maps are weak
        equivalences.
        %For $p>2$, $P_\bullet$ is weakly equivalent to $B_\bullet$.
      \end{thm}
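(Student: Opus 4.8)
The plan is to realize \eqref{ebpzigzag} by using the two auxiliary reductions $(P/J^2)_\bullet$ and $T_\bullet$ only to bring $P_\bullet$ into a form where Baues's invariants can be read off, and then to invoke the Uniqueness Theorem \cite[15.3.13]{baues} for the final comparison with $B_\bullet$. Throughout, every object in the top row is a crossed algebra with $\pi_0\cong\pi_1\cong A$: for $P_\bullet$ this is essentially the content of the Corollary following Lemma \ref{ebpprops}, since the splice of a differential algebra automatically satisfies the crossed-algebra identity $(\partial b)b'=b(\partial b')$ (the derivation property of $\partial$), and the computation $H(\Gamma;\partial)=A$ identifies both homotopy groups. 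The canonical augmentation $\EBP\os\EBP\to\GG$, sending $\tau_0\mapsto\mu_0$ with $\partial\mu_0=v_0=p$, supplies the vertical maps and identifies the bottom row with the unit object $G_\bullet$ up to weak equivalence, so the vertical squares commute by naturality.

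First I would dispose of the two internal weak equivalences. For $P_\bullet\twoheadrightarrow(P/J^2)_\bullet$ the point is that $H(\Gamma;\partial)=A$ is already computed modulo $I=(v_k)$: as in the Corollary one chooses cycles $\tau_n'\equiv\tau_n$ mod $I$, and since the relevant ideal $J$ contains $I$, the classes in $\pi_0$ and $\pi_1$ are represented by elements untouched by the quotient, while the part of $J^2$ that is divided out forms an acyclic subobject. Hence the projection is an isomorphism on both homotopy groups. For $T_\bullet\hookrightarrow(P/J^2)_\bullet$ I would take $T$ to be the sub-crossed-algebra carrying the dual Steenrod algebra $\FFp[t_k]\otimes E(\tau_k')$ together with its first-order secondary deformation (the classes $\tau_k,\mu_k$); comparing cycles and boundaries modulo $J^2$ shows the inclusion again induces the identity on $\pi_0=\pi_1=A$, so it too is a weak equivalence.

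The substance lies in the map $B_\bullet\to T_\bullet$. Here I would equip $T_\bullet$ with a secondary Hopf-algebra structure inherited from the differential Hopf algebroid $\EBP\us\EBP$ of Lemma \ref{ebpprops}: the primary coproduct is dual to $\Delta t_n$, and the secondary coproduct is governed by the $\mu_a$-correction terms in the formula for $\Delta\tau_n$. From that formula I would extract the left-action operator $L$ and the symmetry operator $S$ exactly as in Lemma \ref{elscomp}, writing $\nabla(r)=\coz(r)+\mu_0L(r)$ and $S=(1+T)\coz$. Since $B_0$ is free on the $P^i$ and $\beta$, a multiplicative $\cmap_0:B_0\to T_0$ exists and lifts to $\cmap_1$ on $B_1$; one then checks, in close analogy with Lemma \ref{lsbcomp}, that the resulting $L$ and $S$ agree with Baues's left-action and symmetry operators for the odd-primary $B_\bullet$ (using \cite[12.7]{jiblmpi} and \cite[14.5.2]{baues}). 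The Uniqueness Theorem \cite[15.3.13]{baues} then promotes $\cmap$ to an isomorphism $B_\bullet\cong T_\bullet$, hence a weak equivalence.

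I expect the computation of $L$ and $S$ for the $\EBP$-model to be the main obstacle. The $\mu_a$-term in $\Delta\tau_n$ bundles the entire secondary defect into one expression involving $\Delta t_{n-a}$ and the products $t_b\otimes t_c$, and disentangling its $\mu_0$-linear part to recover $L$---and then matching this against the Adem-relation description of Baues's $S$ on the generating relations---requires the same careful dual bookkeeping as the $p=2$ argument in Lemma \ref{lsbcomp}, now carried out with the Brown--Peterson generators $t_k,\tau_k$ in place of the $\xi_k$. Once these invariants are pinned down, the remaining verifications are formal.
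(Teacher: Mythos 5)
Your zigzag skeleton coincides with the paper's: the same four columns, the augmentation diagram underneath, two internal weak equivalences, and Baues's Uniqueness Theorem \cite[15.3.13]{baues} as the endgame. Two of your steps, however, gloss over points the paper has to work for. First, $(P/J^2)_\bullet$ is not simply $P_\bullet$ with $J^2$ divided out: its degree-zero part is $\ker\bigl(\partial:\Gamma/J^2\Gamma\rightarrow\Gamma/J^3\Gamma\bigr)$, and the exactness of the rows is not a formal ``acyclic subobject'' argument --- the paper proves it by identifying the associated graded of the complex $E/J\rightarrow E/J^2\rightarrow\cdots$ with a super de Rham complex (Lemma \ref{augzigzaglem}) and then reducing from $\Gamma$ to coefficients via the basis $X(R;\epsilon)$ dual to $t^R\tilde\tau^\epsilon$ with $\partial$-closed $\tilde\tau_k$. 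Second, you never say where $p>2$ enters. It enters exactly at Lemma \ref{partialtau}: $\partial\tau_n\equiv 0$ mod $I^3$ for odd $p$, which is what puts $T_0=S_0\{\!\{P(R)Q(\epsilon)\}\!\}$ inside $(P/J^2)_0$; at $p=2$ this fails ($\partial\tau_n\equiv v_{n-1}^2t_1$ mod $I^3$), so a proof that does not isolate this point proves too much.

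The genuine divergence --- and the gap --- is at $B_\bullet\rightarrow T_\bullet$. You propose to read a nontrivial secondary coproduct off the $\mu_a$-correction terms in $\Delta\tau_n$, extract $L$ and $S$ from a decomposition $\nabla(r)=\coz(r)+\mu_0L(r)$, and match them against Baues's odd-primary operators by an Adem-relation computation ``in close analogy with Lemma \ref{lsbcomp}''; you flag this as the main obstacle. The paper's central observation is that this obstacle does not exist: the coproduct $\Delta_\Gamma$ on $\Gamma$ is dual to the multiplication of the \emph{commutative} $E$-algebra $\Gamma\us$, hence is $E$-bilinear and symmetric on the nose, and the $\mu_a$-terms in $\Delta\tau_n$ (Lemmas \ref{ebpprops}, \ref{ebpmodisq}) belong to the Hopf algebroid structure itself rather than encoding a linearity or symmetry defect. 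Consequently $L=S=0$ for $P_\bullet$, hence for $T_\bullet$ and $\pmap\os T_\bullet$, and the Uniqueness Theorem applies at once. As written, your plan is missing its essential input: to ``match'' you would need the values of the odd-primary left-action and symmetry operators of $B_\bullet$, and the references you cite (\cite[12.7]{jiblmpi}, \cite[14.5.2]{baues}) are the $p=2$ formulas used in Lemma \ref{lsbcomp}. Had you carried the computation out you would presumably have discovered the vanishing, but the disentangling of $\Delta\tau_n$ you anticipate is precisely the work the bilinearity-and-symmetry argument renders unnecessary, and without the vanishing statement the appeal to the Uniqueness Theorem is not yet justified.
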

      
      Note that $P_\bullet$ itself cannot be the target
      of a comparison map from $B_\bullet$ 
      as $p^2$ is zero in $B_0$ but
      not in $P_0$.
      In the statement we have also 
      singled out an intermediate sequence
      $T_\bullet$. This sequence is of independent interest because it is 
      quite small and given by explicit formulas. 
      
      To construct \myref{ebpzigzag} we first establish
      the diagram of augmentations. 
      %This is done in the following Lemma. 
      Let $J=I\cdot E \subset E$.
      \begin{lemma}\label{augzigzaglem}
        Let $ZE = \ker E\xrightarrow{\partial} E$
        and $w_k = v_k\mu_0 -p\mu_k = - \partial(\mu_0\mu_k)\in J$.      
        Then there is a commutative diagram
        \begin{align*}
          \xymatrix{
            {G^P}_\bullet \ar@{ ->>}[d] 
            & \FF_p\ar@{=}[d] \ar@{ >->}[r] 
            & E/\partial E \ar[r]^-{\partial} \ar@{ ->>}[d]
            & ZE \ar@{->>}[r] \ar@{ ->>}[d] &\FF_p \ar@{=}[d]
            \\
            {G^{P/J^2}}_\bullet  
            & \FF_p \ar@{ >->}[r] 
            & E/J \ar[r]^-{\partial} 
            & *+<3mm>{\Big(\ker E/J^2\raisebox{0pt}[0pt][0pt]{$\xrightarrow{\,\,\partial\,\,}$} E/J^3\Big)} \ar@{->>}[r] &\FF_p 
            \\
            {G^T}_\bullet \ar@{ >->}[u]  
            & \FF_p\ar@{=}[u] \ar@{ >->}[r] 
            & \FF_p\{1,\mu_k,\mu_0\mu_k\} \ar[r]^-{\partial} \ar@{ >->}[u]
            & \ZZp[v_k,w_k]/J^2 \ar@{->>}[r]  \ar@{ >->}[u] &\FF_p \ar@{=}[u]
            \\
            G_\bullet \ar@{ >->}[u] 
            & \FF_p\ar@{=}[u] \ar@{ >->}[r] 
            & \FF_p\{1,\mu_0\} \ar[r]^-{\partial} \ar@{ >->}[u]
            & \ZZ/(p^2) \ar@{->>}[r]  \ar@{ >->}[u] &\FF_p \ar@{=}[u]
          }
        \end{align*}
        with exact rows.
      \end{lemma}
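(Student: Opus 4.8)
The plan is to read all four rows off the single differential $\partial$ on $E=\EBP\us$ together with its $J$-adic filtration, and to reduce every exactness and (in)jectivity claim to one structural input: as an algebra with differential, $(E,\partial)$ is the Koszul complex of the sequence $(v_0,v_1,v_2,\dots)=(p,v_1,v_2,\dots)$. Indeed $E=\Lambda_{\ZZp}(\mu_k)\otimes_{\ZZp}\BP\us$ with $\partial\mu_k=v_k$, and $(p,v_1,v_2,\dots)$ is a regular sequence generating $I$ in $\BP\us=\ZZp[v_1,v_2,\dots]$; hence $H(E;\partial)=\FF_p$ (as observed in the proof of the Corollary above). The decisive refinement is that the associated graded $\mathrm{gr}_J(E,\partial)$ is again a Koszul complex, now over $\FF_p[\bar v_k]$ on the variables $\bar v_k$ (with $\bar v_0=\bar p$); its homology is therefore $\FF_p$, concentrated in filtration $0$ and exterior degree $0$.

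From this gr-acyclicity I would first extract the one lemma that powers everything, namely strictness of $\partial$ for the filtration: a boundary lying in $J^n$ is already the boundary of an element of $J^{n-1}$, i.e. $\partial E\cap J^n=\partial(J^{n-1})$ for $n\ge 1$ (and, equivalently, a cycle modulo $J^n$ lifts to an honest cycle modulo $J^{n-1}$). This follows by successive approximation: if $\partial a\in J^n$ while $a$ has lower filtration $j$, its leading class $\bar a\in\mathrm{gr}^j$ is a $\bar\partial$-cycle, hence a $\bar\partial$-boundary whenever $(j,\deg)\ne(0,0)$, and correcting $a$ by that boundary raises its filtration; the only obstruction, the class of a constant, is removed by subtracting it. With strictness in hand the rows are quick. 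Rows $1$ and $4$ are immediate from $H(E;\partial)=\FF_p$ and from $\partial\mu_0=p$; row $3$ is the finite computation $\partial 1=0$, $\partial\mu_k=v_k$, $\partial(\mu_0\mu_k)=-w_k$, whose image is exactly the augmentation ideal $(\bar p,\bar v_k,\bar w_k)$ of $\ZZp[v_k,w_k]/J^2$ and whose kernel is $\FF_p\{1\}$. For row $2$ I would run the filtration argument directly: $x$ with $\partial x\in J^2$ forces $\bar x\in\mathrm{gr}^0$ to be a $\bar\partial$-cycle, hence a constant, giving exactness at $E/J$; for exactness at $\ker(E/J^2\to E/J^3)$ I would peel off the $\mathrm{gr}^0$- and then the $\mathrm{gr}^1$-class of an augmentation-trivial element using gr-acyclicity, exhibiting it as $\partial$ of something.

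The vertical maps are all the evident ones induced by $\ZZp[v_k,w_k]\subseteq E$, by the quotients $E\to E/J^n$, and by $ZE\subseteq E$; commutativity of every square is automatic, since each map is built from $\partial$ and the ideals $J^n$, and on the outer copies of $\FF_p$ it is the identity. Injectivity of the two upward maps is low-degree bookkeeping: $1,\mu_k,\mu_0\mu_k$ are linearly independent in $E/J=\Lambda_{\FF_p}(\mu_k)$, while $p,v_k,w_k$ have filtration exactly $1$ with independent classes in $\mathrm{gr}^1$, so $\ZZp[v_k,w_k]/J^2$ and $\ZZ/p^2=\ZZp/(p^2)$ embed as claimed. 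The genuinely substantive point is the surjectivity of the middle downward map $ZE\twoheadrightarrow\ker(E/J^2\to E/J^3)$: an element of the target is $[x]_{J^2}$ with $\partial x\in J^3$, and by strictness $\partial x=\partial y$ for some $y\in J^2$, so $x-y$ is an honest cycle reducing to it. I expect this last step — equivalently the exactness of row $2$ — to be the main obstacle, and it is precisely where the regular-sequence (Koszul) hypothesis is used; everything else is naturality and finite linear algebra.
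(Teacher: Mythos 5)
Your proposal is correct and takes essentially the same route as the paper: the paper's proof also reduces everything to the acyclicity of the $J$-adic associated graded of $(E,\partial)$, which it identifies via $v_k=d\mu_k$ with the super de Rham complex $\Omega^n=\FF_p\{\mu^\epsilon\, d\mu_{i_1}\cdots d\mu_{i_n}\}$ --- the same complex as your Koszul complex over $\FF_p[\bar v_k]$ --- and concludes that the tower $E/J\rightarrow E/J^2\rightarrow E/J^3\rightarrow\cdots$ has homology $\FF_p$ in every spot, whence the exactness of the ${G^{P/J^2}}_\bullet$ row (the other rows and the vertical maps being, as you also found, finite bookkeeping). Your strictness lemma $\partial E\cap J^n=\partial(J^{n-1})$, proved by successive approximation, is just an explicit repackaging of that same filtration argument, so the two proofs agree in substance.
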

      \begin{proof}
        This is straightforward, except for the 
        exactness of ${G^{P/J^2}}_\bullet$. First note that 
        \begin{align*}
          \xymatrix{
            \FF_p \ar@{ >->}[r] 
            & J/J^2\ar[r]^-{\partial} 
            & J^2/J^3\ar[r]^-{\partial} 
            & J^3/J^4\ar[r]^-{\partial} 
            &\cdots
          }
        \end{align*}
        is exact because it can be identified with the 
        super deRham complex
        $\Omega^n = \FF_p\{\mu^\epsilon d\mu_{i_1}\cdots d\mu_{i_n}\}$
        with $df = \sum \frac{\partial f}{\partial \mu_k} d\mu_k$
        via $v_k=d\mu_k$.
        Let $E_J$ denote the complex 
        \begin{align*}
          \xymatrix{
            E/J\ar[r]^-{\partial} 
            & E/J^2\ar[r]^-{\partial} 
            & E/J^3\ar[r]^-{\partial} 
            & E/J^4\ar[r]^-{\partial} 
            &\cdots.
          }
        \end{align*}
        Its associated graded 
        with respect to the $J$-adic filtration is the sum of shifted copies
        $\Omega^{k+\ast}$ for $k\ge 0$,
        so one has $H_k(E_J)=\FF_p$ for all $k$.
        The exactness of 
        $\,\FF_p\hookrightarrow E/J\rightarrow\left(\ker \,\partial:E/J^2\rightarrow E/J^3\right)\twoheadrightarrow\FF_p\,$
        %${G^{P/J^2}}_\bullet$ 
        is an easy consequence.
      \end{proof}

      Now let $P(R)Q(\epsilon)\in \Gamma = \Hom_E(\Gamma\us,E)$ 
      denote the dual of $t^R\tau^\epsilon$
      with respect to the monomial basis of $\Gamma\us$. 
      (One easily verifies that this
      is indeed the product of $P(R):=P(R)Q(0)$ 
      and $Q(\epsilon):=P(0)Q(\epsilon)$ as suggested by the notation.)
      We can think of $\Gamma$
      as the set $E\{\!\{ P(R)Q(\epsilon) \}\!\}$ of infinite sums 
      $\sum a_{R,\epsilon}P(R)Q(\epsilon)$ with coefficients
      $a_{R,\epsilon}\in E$.  
    
      It is important to realize that the $P(R)$ are {\em not} 
      $\partial$-cycles:
      for $p=2$, for example, one finds that $\partial\tau_n\equiv v_{n-1}^2t_1$ 
      mod $I^3$
      which shows that
      $\partial P^1 \equiv 4Q(0,1) + v_1^2Q(0,0,1) + \cdots$
      mod $I^3$.
      \begin{lemma}\label{partialtau}
        Let $p>2$. Then $\partial\tau_n\equiv 0$ mod $I^3$.
      \end{lemma}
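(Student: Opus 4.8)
The plan is to reduce the statement to a congruence about the right unit on the Araki generators and then to control the error term by an $I$-adic filtration argument. First I would recall, from the proof of the Corollary above, that the differential is
$$\partial\tau_n = \eta_R(v_n) - \sum_{k=0}^n v_k t_{n-k}^{p^k},$$
this being forced by $\partial\eta_R=\eta_R\partial$ applied to \myref{taudef} together with $\partial v_k=0$ and $\partial t_j=0$. There it is also observed that the right-hand side lies in $I^2$; the content of the present Lemma is to sharpen the congruence to $I^3$ when $p$ is odd. Thus it suffices to show
$$\eta_R(v_n) \equiv \sum_{k=0}^n v_k t_{n-k}^{p^k}\pmod{I^3}\qquad(p>2).$$

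Next I would set up the formal-group computation that governs $\eta_R(v_n)$. Writing $F$ for the $p$-typical formal group law over $\BP\us$ and $g(x)=\sum^F_i t_i x^{p^i}$ for the canonical strict isomorphism realizing $\eta_R$, the $t_i$ are characterized by the fact that $g$ conjugates the $p$-series: $g\bigl([p]_F(x)\bigr)=[p]_{\eta_R F}\bigl(g(x)\bigr)$, where $[p]_F(x)=\sum^F_i v_i x^{p^i}$ with $v_0=p$ and $[p]_{\eta_R F}(x)=\sum^{\eta_R F}_i \eta_R(v_i)x^{p^i}$. Extracting the coefficient of $x^{p^n}$ yields a recursion expressing $\eta_R(v_n)$ through the $v_i$, the $t_j$ and the lower $\eta_R(v_i)$. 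Reducing the coefficients modulo $I$ collapses every formal-group sum to an ordinary sum---since $\BP\us/I=\FFp$ carries the additive formal group law, so that $F(x,y)\equiv x+y$ and all structure constants of $F$ lie in $I$---and one reads off exactly the mod-$I^2$ formula $\eta_R(v_n)\equiv\sum_k v_k t_{n-k}^{p^k}$ already recorded in the Corollary.

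The heart of the proof is then to classify the corrections, which a priori lie in $I^2$, and to see that each in fact lies in $I^3$ once $p>2$. I would organize this by $I$-adic order. Corrections arise from exactly two sources. The first is the non-additivity of $F$ and $\eta_R F$: every such term carries a structure constant of the formal group law, which lies in $I$, times a product of at least two of the $v_i$, hence lies in $I\cdot I^2=I^3$. The second is the expansion of the $p$-power operations in $g(x)^{p^i}$ and in $\bigl([p]_Fx\bigr)^{p^j}$: the genuinely mixed monomials carry multinomial coefficients divisible by $p\in I$ and again involve at least two of the $v_i$, so they too lie in $I^3$, while the \emph{pure} $p$-th-power monomials contribute Frobenius terms of the shape $v_k^{p}\,t^R$ with unit coefficient. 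Such a term lies in $I^{p}\subseteq I^3$ precisely because $p\ge 3$. Collecting these estimates gives $\eta_R(v_n)-\sum_k v_k t_{n-k}^{p^k}\in I^3$, which is the claim. (For $p=2$ the single Frobenius term $v_{n-1}^{2}t_1$ survives in $I^2/I^3$, which is exactly the failure recorded in the remark preceding the Lemma.)

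The main obstacle I anticipate is the bookkeeping in this last step: one must verify that the conjugation identity produces no \emph{unit-coefficient} quadratic term $v_av_b\,t^R$ with $a,b\ge 1$, since such a term would lie in $I^2\setminus I^3$ and would obstruct the conclusion. The analysis above locates every would-be such term inside a formal-group correction, where it necessarily acquires an extra factor from $I$; making this rigorous requires a careful, convention-fixed expansion of both sides of $g\bigl([p]_Fx\bigr)=[p]_{\eta_R F}\bigl(g(x)\bigr)$ to $I$-adic order $2$, with the sign conventions pinned down so as to reproduce the established mod-$I^2$ formula and the check $\partial\tau_1=-p^pt_1$.
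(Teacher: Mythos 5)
Your opening reduction is, in fact, the paper's entire proof: the paper observes that the claim is equivalent to $\eta_R(v_n)\equiv\sum_{0\le k\le n}v_kt_{n-k}^{p^k}$ mod $I^3$ and then ``leaves this as an exercise.'' So everything after your first display is your own elaboration of that exercise. Its skeleton is sound, and you correctly locate where $p>2$ enters: the Frobenius terms $t_i\,\eta_R(v_j)^{p^i}$ with $i\ge 1$ lie in $I^{p^i}\subseteq I^p\subseteq I^3$ exactly when $p\ge 3$, which also explains the surviving $v_{n-1}^2t_1$ at $p=2$ noted just before the Lemma. The formal-group cross terms are likewise fine: every structure constant of $F$ (and of $\eta_R F$) has positive degree, hence lies in $I$, and multiplies two arguments already in $I$, giving $I^3$.

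There is, however, a genuine gap in your treatment of the mixed monomials. Your justification --- coefficient divisible by $p$ \emph{and} ``at least two of the $v_i$'' --- is valid only on the side $g\bigl([p]x\bigr)$, where the base of the $p^i$-th power has all its coefficients in $I$. On the other side, $[p]_{\eta_R F}\bigl(g(x)\bigr)=\sum^{\eta_R F}_j\eta_R(v_j)\,g(x)^{p^j}$, the mixed monomials in the ordinary expansion of $g(x)^{p^j}$ involve only $t$'s and carry exactly \emph{one} factor $\eta_R(v_j)\in I$; your estimate then yields only $p\cdot\eta_R(v_j)\,t^\alpha\in I^2$, which is precisely the kind of obstruction you worry about in your last paragraph (though you mis-locate it as a quadratic term $v_av_b\,t^R$ --- the danger is a term $p\,v_j\,t^\alpha$). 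The gap is repairable: only the coefficient of $x^{p^n}$ matters, so a mixed monomial $t^\alpha=\prod_i t_i^{\alpha_i}$ contributes only if $\sum_i\alpha_ip^i=p^n$ with $\sum_i\alpha_i=p^j$. By Legendre's formula, $v_p\binom{p^j}{\alpha_0,\alpha_1,\dots}=\bigl(\sum_i s_p(\alpha_i)-1\bigr)/(p-1)$ where $s_p$ is the base-$p$ digit sum; if this valuation were $1$, then $p^n$ would be written as a sum of exactly $p$ powers of $p$, forcing all base-$p$ digits of all the $\alpha_i$ to sit at the single weighted position $i+d=n-1$, whence $\sum_i\alpha_i$ has digit sum $p\neq 1=s_p(p^j)$ --- a contradiction. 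Hence $p^2$ divides every such mixed multinomial coefficient and the term lies in $I^3$ after all. With this (or an equivalent cancellation argument) inserted, your plan does close the exercise the paper leaves open, at every odd prime.
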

      \begin{proof} 
        The claim is equivalent to 
        $\eta(v_n)\equiv \sum_{0\le k\le n} v_kt_{n-k}^{p^k}$ mod $I^3$.
        We leave this as an exercise.
      \end{proof}

      The following Lemma defines $(P/J^2)_\bullet$ and its weak equivalence 
      with $P_\bullet$.
      \begin{lemma}
        Let $Z\Gamma = \ker \partial:\Gamma\rightarrow\Gamma$.
        There is a commutative diagram 
        \begin{align*}
          \xymatrix{
            {P}_\bullet \ar@{ ->>}[d] 
            & A\ar@{=}[d] \ar@{ >->}[r] 
            & \Gamma/\partial \Gamma \ar[r]^-{\partial} \ar@{ ->>}[d]
            & Z\Gamma \ar@{->>}[r] \ar@{ ->>}[d] &A \ar@{=}[d]
            \\
            {P/J^2}_\bullet  
            & A \ar@{ >->}[r] 
            & \Gamma/J\Gamma \ar[r]^-{\partial} 
            & *+<3mm>{\Big(\ker \Gamma/J^2\Gamma\raisebox{0pt}[0pt][0pt]{$\xrightarrow{\,\,\partial\,\,}$} \Gamma/J^3\Gamma\Big)} \ar@{->>}[r] &A 
            }
        \end{align*}
        with exact rows.
      \end{lemma}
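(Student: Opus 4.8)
The plan is to treat this as the $\Gamma$-coefficient analogue of Lemma~\ref{augzigzaglem}, replacing $E$ by $\Gamma$ and $\FF_p$ by $A$ throughout. First I would dispose of the formal points. The top row is the sequence $P_\bullet$ of \myref{ebpmod}, whose exactness is already built into the splicing of $H(\Gamma;\partial)=A$. For the verticals I take the identity on the two outer copies of $A$, the canonical projection $\Gamma/\partial\Gamma\twoheadrightarrow\Gamma/J\Gamma$ in the second column, and in the third column the map $z\mapsto z+J^2\Gamma$. These are well defined because $\partial\Gamma\subseteq J\Gamma$ (the differential raises the $J$-adic filtration, since $\partial\mu_k=v_k\in J$), so the projection descends; and because $\partial z=0$ forces $z+J^2\Gamma\in\ker\bigl(\partial\colon\Gamma/J^2\Gamma\to\Gamma/J^3\Gamma\bigr)$. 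Commutativity of the two squares is immediate. Since the verticals are identities on the outer $A$'s, once both rows are exact the map induces the identity on $\pi_0$ and $\pi_1$, so it is automatically the desired weak equivalence; the only genuine content is the exactness of the bottom row.

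That exactness I would extract from a filtration computation identical in spirit to the one in Lemma~\ref{augzigzaglem}. Consider the complex $\Gamma_J$ given by $\Gamma/J\Gamma\xrightarrow{\partial}\Gamma/J^2\Gamma\xrightarrow{\partial}\Gamma/J^3\Gamma\xrightarrow{\partial}\cdots$, which is well defined precisely because $\partial(J^n\Gamma)\subseteq J^{n+1}\Gamma$. Exactly as in the proof of Lemma~\ref{augzigzaglem}, the four-term bottom row is an elementary consequence of the single assertion $H_k(\Gamma_J)=A$ for all $k$: left exactness is the identity $H_0(\Gamma_J)=\ker\bigl(\partial\colon\Gamma/J\Gamma\to\Gamma/J^2\Gamma\bigr)=A$, while exactness at the third term says that $H_1(\Gamma_J)=A$ is exactly the kernel of the terminal surjection to $A$.

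To compute $H_\ast(\Gamma_J)$ I pass to the associated graded for the $J$-adic filtration, using that $\Gamma=E\{\!\{P(R)Q(\epsilon)\}\!\}$ is (completed-)free as a left $E$-module on the basis $\{P(R)Q(\epsilon)\}$, so that $\gr_J\Gamma=\gr_J E\otimes_{\FF_p}\FF_p\{P(R)Q(\epsilon)\}$. Writing $\partial$ via the left-unit derivation as $\partial\bigl(a\cdot P(R)Q(\epsilon)\bigr)=(\partial a)\,P(R)Q(\epsilon)\pm a\,\partial\bigl(P(R)Q(\epsilon)\bigr)$, the first summand is the super de~Rham differential of Lemma~\ref{augzigzaglem} tensored with the identity on the basis, while the second contributes nothing to the leading term provided $\partial\bigl(P(R)Q(\epsilon)\bigr)\in J^2\Gamma$. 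Granting this, the induced differential on $\gr_J\Gamma$ is $d\otimes\id$, so its homology is $H(\gr_J E,d)\otimes\FF_p\{P(R)Q(\epsilon)\}$; since $H(\gr_J E,d)$ is the one-dimensional de~Rham cohomology already computed in Lemma~\ref{augzigzaglem} and $\FF_p\{P(R)Q(\epsilon)\}\cong A$ (this is just $\Gamma\us/I$ dualized, i.e.\ $A\us$), assembling the filtration spectral sequence yields $H_k(\Gamma_J)=A$ for every $k$, as desired.

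The one step needing real input, and the place I expect the main obstacle, is the filtration estimate $\partial\bigl(P(R)Q(\epsilon)\bigr)\in J^2\Gamma$, i.e.\ that the basis operations are $\partial$-cycles to leading order. This is the dual incarnation of the behaviour of $\partial\tau_n$: by Lemma~\ref{partialtau} one has $\partial\tau_n\equiv0$ mod $I^3$ for $p>2$, and together with $\partial t_k=0$ and $p=v_0\in I$ this forces the $R$- and $\epsilon$-changing parts of $\partial$ into filtration $\ge2$, leaving only the de~Rham contribution in filtration $1$. Carrying out this bookkeeping carefully, and checking that the $J$-adic filtration is exhaustive and complete so that the associated-graded computation actually detects $H_\ast(\Gamma_J)$ (the completeness being automatic from the description of $\Gamma$ as infinite sums), is the only nonformal part of the argument; everything else is the same diagram chase as in Lemma~\ref{augzigzaglem}.
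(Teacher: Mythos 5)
Your proof is correct, but it takes a genuinely different route from the paper's. The paper avoids the associated-graded computation over $\Gamma$ altogether by changing basis: it chooses $\tilde\tau_k\in\Gamma\us$ with $\tilde\tau_k\equiv\tau_k$ mod $I$ and $\partial\tilde\tau_k=0$ (such corrections exist for $p>2$, exactly as in the proof that $H(\EBP\us\EBP;\partial)=A\us$) and lets $X(R;\epsilon)\in\Gamma$ be dual to $t^R\tilde\tau^\epsilon$. Then $\Gamma=\prod_{R,\epsilon}E\cdot X(R;\epsilon)$ with $\partial X(R;\epsilon)=0$ on the nose, so $\partial$ acts purely on coefficients and \emph{every} claim in the diagram --- exactness of the bottom row, but also the surjectivity decorations on the vertical maps --- reduces verbatim to the corresponding coefficient statement in Lemma \ref{augzigzaglem}. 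You instead keep the uncorrected basis $P(R)Q(\epsilon)$ and control its failure to consist of cycles by a filtration estimate; that estimate is right, and in fact stronger than you claim: dualizing Lemma \ref{partialtau} gives $\langle\partial(P(R)Q(\epsilon)),t^S\tau^\delta\rangle=\pm\langle P(R)Q(\epsilon),\partial(t^S\tau^\delta)\rangle\in I^3E$, so $\partial(P(R)Q(\epsilon))\in J^3\Gamma$, while your argument needs only filtration $\ge 2$, i.e.\ the elementary congruence $\eta_R(v_n)\equiv\sum_k v_kt_{n-k}^{p^k}$ mod $I^2$ rather than the mod $I^3$ statement. The degeneration you need is automatic and worth stating: the $E_1$-page of your spectral sequence is concentrated on a single diagonal (the class in cohomological degree $k$ sits in filtration exactly $k$), so no higher differential can connect surviving entries, and convergence is unproblematic since the filtration of each term $\Gamma/J^{k+1}\Gamma$ is finite. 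As to what each approach buys: the paper's change of basis makes the lemma a one-line corollary of Lemma \ref{augzigzaglem} and yields the two-headed arrow $Z\Gamma\twoheadrightarrow\ker\bigl(\Gamma/J^2\Gamma\to\Gamma/J^3\Gamma\bigr)$ for free, but it depends on the existence of the honest cycles $\tilde\tau_k$; your argument never corrects the $\tau_n$ at all, at the cost of rerunning the de Rham computation with completed coefficients and of supplying the lifting argument for that vertical surjection yourself --- this is a successive approximation along the same filtration, not literally ``the same diagram chase as in Lemma \ref{augzigzaglem}'', and is the one point your sketch leaves implicit.
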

      \begin{proof}
        Choose $\tilde\tau_k\in\Gamma\us$ with $\tilde\tau_k\equiv\tau_k$ mod $I$ 
        and $\partial\tilde\tau_k=0$. Let $X(R;\epsilon)\in\Gamma$ be dual
        to $t^R\tilde\tau^\epsilon$. Then 
        $\Gamma=\prod_{R,\epsilon} E\cdot X(R;\epsilon)$ and
        $\partial X(R;\epsilon)=0$. It follows that the exactness
        can be checked on the coefficients alone where it was
        established in Lemma \ref{augzigzaglem}.
      \end{proof}

      The construction of $T_\bullet$ requires a more explicit 
      understanding of $\Gamma\us/I^2$.
      \begin{lemma}\label{ebpmodisq}
        For a family $(x_k)$ let $\gppol_{p^n}(x_k)\in \FFp[x_k]$ 
        %\marginpar{\Large\bf sign?}
        be defined by
        $\sum x_k^{p^n} - \left(\sum x_k\right)^{p^n} = p\gppol_{p^n}(x_k).$
        Then modulo $I^2$ one has 
        \begin{align*}
          \Delta t_n &\equiv \sum_{n=a+b}t_a\otimes t_b^{p^a} 
          + \sum_{0<k\le n} v_k \gppol_{p^k}\left( 
          t_a\otimes t_b^{p^a} \,\Big\vert\, a+b=n-k\right).
        \end{align*}
        Let $w_k=-\partial(\mu_0\mu_k) = v_k\mu_0 - p\mu_k$.
        Then 
        \begin{align*}
          \Delta\tau_n &\equiv 1\otimes \tau_n + \sum_{n=a+b} \tau_a\otimes t_{b}^{p^a} 
          + \sum_{0<k\le n} w_k \gppol_{p^k}\left( 
          t_a\otimes t_b^{p^a} \,\Big\vert\, a+b=n-k\right).
        \end{align*} 
        Furthermore,
        \begin{align*}
          \eta_R(v_n) &\equiv \sum_{0\le k\le n}v_kt_{n-k}^{p^k}, \\
          \eta_R(w_n) &\equiv -p\tau_n 
          + \sum_{1\le k<n}w_kt_{n-k}^{p^k} + \sum_{0\le k\le n}v_kt_{n-k}^{p^k}\tau_0,
        \end{align*}
      \end{lemma}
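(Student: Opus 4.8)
The plan is to propagate two standard facts about the Hopf algebroid $(\BP\us,\BP\us\BP)$ through the exact formulas of Lemma~\ref{ebpprops}, while keeping careful track of the fact that we work over $\ZZp$ and not over $\FFp$. The two inputs are the linearized right unit $\eta_R(v_n)\equiv\sum_{0\le k\le n}v_kt_{n-k}^{p^k}\pmod{I^2}$ and the linearized coproduct on the $t_n$. The first is the mod-$I^2$ shadow of the congruence behind Lemma~\ref{partialtau} (stated there modulo $I^3$), so it may simply be quoted; alternatively one obtains it by applying $\eta_R$ to Araki's relation $\sum_{i+j=n}m_iv_j^{p^i}=pm_n$ (with $m_0=1$, $v_0=p$), inserting $\eta_R(m_i)=\sum_{a+b=i}m_at_b^{p^a}$, and solving for $\eta_R(v_n)$ by induction on $n$ modulo $I^2$.

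For $\Delta t_n$ the leading term $\sum_{a+b=n}t_a\otimes t_b^{p^a}$ is the familiar mod-$p$ coproduct of the dual Steenrod algebra, coming from the composition law for the universal strict isomorphism of the ($\bmod\,I$, additive) formal group. The first-order ($v$-linear) correction is obtained by reducing the exact $\BP\us\BP$ coproduct modulo $I^2$; the point is that it can be packaged, via the definition of $\gppol_{p^k}$, as $\sum_{0<k\le n}v_k\gppol_{p^k}(t_a\otimes t_b^{p^a}\mid a+b=n-k)$. I regard this reduction as routine but bookkeeping-heavy.

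The real work --- and the step I expect to be the main obstacle --- is the congruence for $\Delta\tau_n$. I would substitute the formula for $\Delta t_n$ into the exact expression
\[
\Delta\tau_n=1\otimes\tau_n+\sum_{k=0}^n\tau_k\otimes t_{n-k}^{p^k}
+\sum_{0\le a\le n}\mu_a\Bigl(-\Delta t_{n-a}^{p^a}+\sum_{b+c=n-a}t_b^{p^a}\otimes t_c^{p^{a+b}}\Bigr)
\]
of Lemma~\ref{ebpprops} and study the bracket $X_a$. A clean dichotomy then appears, entirely because we are over $\ZZp$. At $a=0$ no Frobenius is applied, the $v$-linear part of $\Delta t_n$ survives, and the bracket reduces to $-\sum_{0<k\le n}v_k\gppol_{p^k}$; multiplying by $\mu_0$ and using $\mu_0v_k=w_k+p\mu_k$ produces both a $w_k\gppol_{p^k}$ term and a spurious $p\mu_k\gppol_{p^k}$ term. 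For $a\ge1$ the $v$-linear part is pushed into $I^2$ by the factor $p^a$, but the Frobenius defect does \emph{not} vanish: $\sum_{b+c=n-a}(t_b\otimes t_c^{p^b})^{p^a}-\bigl(\sum_{b+c=n-a}t_b\otimes t_c^{p^b}\bigr)^{p^a}=p\gppol_{p^a}$ by the definition of $\gppol$, so that $\mu_aX_a\equiv p\mu_a\gppol_{p^a}$ instead of $0$. After reindexing $a\leftrightarrow k$, these $p\mu_k\gppol_{p^k}$ contributions cancel the spurious ones from $a=0$, leaving precisely the $w_k\gppol_{p^k}$ term of the statement. The delicate point is that the summands one is tempted to drop because they would be zero over $\FFp$ are exactly what clears the $p\mu_k$ terms, so the computation has to be organised around $\gppol$ from the outset.

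Finally $\eta_R(w_n)$ is a direct calculation: from $\eta_R(w_n)=\eta_R(v_n)\eta_R(\mu_0)-p\,\eta_R(\mu_n)$ with $\eta_R(\mu_0)=\mu_0+\tau_0$ and the linearized $\eta_R(v_n)$, I would substitute $v_k\mu_0=w_k+p\mu_k$ (noting $w_0=0$). The resulting $p\mu_kt_{n-k}^{p^k}$ terms cancel against the contribution $-p\,\eta_R(\mu_n)=-p\sum_k\mu_kt_{n-k}^{p^k}-p\tau_n$, and what survives are the $w_kt_{n-k}^{p^k}$, the $\bigl(\sum_kv_kt_{n-k}^{p^k}\bigr)\tau_0$ and the $-p\tau_n$ terms recorded in the statement. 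Thus all four congruences reduce to the two inputs together with the $\ZZp$-Frobenius bookkeeping, and the only genuine difficulty is the cancellation that produces $\Delta\tau_n$.
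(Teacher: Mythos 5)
Your proposal is correct and follows essentially the same route as the paper: the published proof likewise obtains $\Delta t_n$ from the $m$-series recursion ($v_n\equiv pm_n$ mod $I^2$ and $\eta_R(m_n)=\sum m_at_b^{p^a}$), packages the Frobenius defect via $\gppol$, then handles $\Delta\tau_n$ with the one-line remark that it ``follows with Lemma \ref{ebpprops}'' and leaves $\eta_R(v_n)$ and $\eta_R(w_n)$ to the reader. The $a=0$ versus $a\ge 1$ dichotomy you describe, with the $p\mu_k\gppol_{p^k}$ terms from the Frobenius defect cancelling the spurious terms created by $v_k\mu_0=w_k+p\mu_k$, is precisely the suppressed computation behind that one-line remark, so your write-up is if anything more detailed than the paper's own proof.
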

      \begin{proof}
        The $v_k$ are defined by $pm_n=\sum_{n=a+b}m_av_b^{p^a}$
        and it follows easily that $v_n\equiv pm_n$ modulo $I^2\cdot H\us(\EBP)$.
        Recall that $\eta_R(m_n) = \sum_{n=a+b} m_at_b^{p^a}$
        and that $\Delta t_n$ can be computed from 
        $(\eta_R\otimes\id)\eta_R(m_n) = \Delta\eta_R(m_n)$.
        Inductively, this gives
        \begin{align*}
          \Delta t_n &= \sum_{n=a+b} t_a\otimes t_b^{p^a} 
          + \sum_{0<k\le n} m_k
          \left( 
          -\Delta t_{n-k}^{p^k} 
          + \!\!\sum_{n-k=a+b}\!\! t_a^{p^k}\otimes t_b^{p^{k+a}} 
          \right) \\
          &\equiv \sum_{n=a+b} t_a\otimes t_b^{p^a} 
          + \sum_{0<k\le n} v_k \gppol_{p^k}\left( 
          t_a\otimes t_b^{p^a} \,\Big\vert\, a+b=n-k\right)
        \end{align*}
        as claimed.
        The formula for $\Delta\tau_n$ now 
        follows with Lemma \ref{ebpprops}.
        We leave the computation of $\eta_R(v_n)$ and $\eta_R(w_n)$
        to the reader.
      \end{proof}
     
      Let $S_\bullet = {G^T}_\bullet$ and recall that
      \begin{align*}
        S_0 &= \ZZ/p^2 + \FF_p\{v_k, w_k\,\vert\,k\ge 1\} \subset E/J^2, \\
        S_1 &= \FFp\{1,\mu_k,\mu_0\mu_k\} \subset E/J.
        \intertext{We now define}
        T_0 &= S_0\{\!\{ P(R)Q(\epsilon) \}\!\} \subset \Gamma/J^2\Gamma, \\
        T_1 &= S_1\{\!\{ P(R)Q(\epsilon) \}\!\} \subset \Gamma/J\Gamma.
      \end{align*}
      \begin{lemma}
        This defines a crossed algebra $T_\bullet\subset (P/J^2)_\bullet$
        as claimed in Theorem \ref{appmainthm}.
      \end{lemma}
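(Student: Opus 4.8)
The plan is to exploit the fact that $T_\bullet = S_\bullet\{\!\{P(R)Q(\epsilon)\}\!\}$ sits inside the ambient differential algebra $\Gamma/J^2\Gamma$, so that the bilinearity of $\partial$ and the Peiffer-type identity $(\partial b)b'=b(\partial b')$ are inherited from $(P/J^2)_\bullet$ and only the closure of $T_0$ and $T_1$ under the structure maps has to be checked. Concretely I would verify, in order: (i) the containments $T_0\subset(P/J^2)_0$ and $T_1\subset(P/J^2)_1$; (ii) that $T_0$ is a subalgebra; (iii) that $T_1$ is a sub-$T_0$-bimodule with $\partial(T_1)\subset T_0$; and (iv) exactness.

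For (i) the containment $T_1\subset(P/J^2)_1=\Gamma/J\Gamma$ is immediate. For $T_0\subset(P/J^2)_0=\ker\big(\partial\colon\Gamma/J^2\Gamma\to\Gamma/J^3\Gamma\big)$ I would first note that the coefficient generators of $S_0$ are $\partial$-cycles: $\partial v_k=0$, while $\partial w_k=v_k\,\partial\mu_0-p\,\partial\mu_k=pv_k-pv_k=0$ using $\partial\mu_0=v_0=p$. Dualizing Lemma~\ref{partialtau} (``$\partial\tau_n\equiv0$ mod $I^3$'' for $p>2$) shows that every operation is a cycle to third order, $\partial\big(P(R)Q(\epsilon)\big)\in J^3\Gamma$. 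Hence for $s\in S_0$ one gets $\partial\big(s\cdot P(R)Q(\epsilon)\big)=\pm\,s\,\partial\big(P(R)Q(\epsilon)\big)\in J^3\Gamma$, so $T_0$ maps to $0$ in $\Gamma/J^3\Gamma$ and lands in $(P/J^2)_0$.

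For (ii) and (iii) I would use that in the Hopf algebroid $\Gamma$ the product of two operations is governed by the coproduct $\Delta$, while moving a scalar across an operation is governed by the right unit $\eta_R$. Lemma~\ref{ebpmodisq} computes $\Delta t_n$, $\Delta\tau_n$, $\eta_R(v_n)$ and $\eta_R(w_n)$ modulo $J^2$, and in every case the coefficients that occur lie in $S_0=\ZZ/p^2\oplus\FFp\{v_k,w_k\}$; since the augmentation ideal of $S_0$ squares to zero in $E/J^2$ one has $S_0\cdot S_0\subset S_0$, whence $T_0\cdot T_0\subset T_0$ and $1\in T_0$. Reducing the same formulas modulo $J$ (where the $v_k$-corrections drop out) and using $\eta_R(\mu_n)=\sum_k\mu_k t_{n-k}^{p^k}+\tau_n$ from Lemma~\ref{ebpprops} together with $\eta_R(\mu_0\mu_n)=\eta_R(\mu_0)\eta_R(\mu_n)$ shows that the $S_1$-coefficients are preserved and that $S_0\cdot S_1\equiv\FFp\cdot S_1=S_1\pmod J$; hence $T_1$ is closed under the two-sided $T_0$-action. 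For the differential one checks $\partial\mu_k=v_k$, $\partial\mu_0=p$ and $\partial(\mu_0\mu_k)=p\mu_k-v_k\mu_0=-w_k$, all of which lie in $S_0$; since $\partial\big(P(R)Q(\epsilon)\big)\in J^3\Gamma$ vanishes in $\Gamma/J^2\Gamma$, we obtain $\partial\big(s\cdot P(R)Q(\epsilon)\big)=(\partial s)\cdot P(R)Q(\epsilon)\in T_0$.

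Finally, for (iv), the previous steps show that on $T_\bullet=S_\bullet\{\!\{P(R)Q(\epsilon)\}\!\}$ the differential is just $\partial$ applied to the $S$-coefficients, the operation part consisting of cycles modulo the relevant ideal. Exactness of $S_\bullet={G^T}_\bullet$ from Lemma~\ref{augzigzaglem}, with $\pi_0=\pi_1=\FFp$, then yields $\pi_0(T_\bullet)=\pi_1(T_\bullet)=\FFp\{\!\{P(R)Q(\epsilon)\}\!\}=A$, and since the inclusion $T_\bullet\hookrightarrow(P/J^2)_\bullet$ is the identity on these groups it is the weak equivalence required in \myref{ebpzigzag}. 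I expect the only real work to be the bookkeeping in step (iii): one must be sure that no coefficient produced by $\Delta$ or $\eta_R$ escapes $S_0$ (resp.\ $S_1$) modulo $J^2$ (resp.\ $J$). This is exactly the content of Lemma~\ref{ebpmodisq}, so the genuine obstacle lies in that lemma rather than in the present statement.
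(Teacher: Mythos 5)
Your proposal is correct and takes essentially the same route as the paper: the paper packages your steps (ii)--(iii) as the statement (via Lemma~\ref{ebpmodisq}) that $(S_0,S_0[t_k,\tau_k])$ is a sub Hopf algebroid of $(E/J^2,\Gamma\us/J^2)$ with $\Gamma\us/J^2=E/J^2\otimes_{S_0}S_0[t_k,\tau_k]$, so that dualization gives the subalgebra inclusion $T_0=\Hom_{S_0}(S_0[t_k,\tau_k],S_0)\hookrightarrow\Gamma/J^2$, and it invokes Lemma~\ref{partialtau} for the containment $T_0\subset(P/J^2)_0$ exactly as in your step (i). The paper leaves the remaining details to the reader, and your explicit handling of them --- the coefficientwise differential with $\partial\mu_k=v_k$, $\partial(\mu_0\mu_k)=-w_k$, and exactness inherited from $S_\bullet={G^T}_\bullet$ via Lemma~\ref{augzigzaglem} --- is a sound way to fill that gap.
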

      \begin{proof}
        Lemma \ref{ebpmodisq}
        shows that $(S_0,S_0[t_k,\tau_k])$ is a sub Hopf algebroid
        of $(E/J^2,\Gamma\us/J^2)$
        with $\Gamma\us/J^2 = E/J^2\otimes_{S_0} S_0[t_k,\tau_k]$.
        Therefore 
        $$T_0=\Hom_{S_0}(S_0[t_k,\tau_k],S_0)
        \hookrightarrow \Hom_{E/J^2}(\Gamma\us/J^2,E/J^2)=\Gamma/J^2$$
        is the inclusion of a subalgebra. 
        By Lemma \ref{partialtau}, $T_0$ is actually contained
        in $(P/J^2)_0 = \ker \partial : \Gamma/J^2 \rightarrow \Gamma/J^3$.
        The remaining details are left to the reader.
      \end{proof}

      To prove the Theorem it only remains to establish the weak equivalence
      $B_\bullet\rightarrow T_\bullet$. 
      Recall that $B_0$ is the free $\ZZ/p^2$-algebra 
      on generators $Q_0$ and $P^k$, $k\ge 1$.
      We can therefore define a multiplicative $\pmap_0:B_0\rightarrow T_0$
      via $Q_0 \mapsto Q(1)$ and $P^k\mapsto P(k)$.
      \begin{lemma}
        There is a weak equivalence $\pmap:B_\bullet \rightarrow T_\bullet$
        that extends $\pmap_0$.
      \end{lemma}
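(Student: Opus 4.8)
The plan is to mirror the comparison $\cmap\os E_\bullet\cong B_\bullet$ carried out at the prime $2$ and then to invoke the Uniqueness Theorem \cite[15.3.13]{baues}. First I would check that $\pmap_0$ is not merely multiplicative but a map of Hopf algebras. Since $B_0$ is free on $Q_0$ and the $P^k$, it suffices to compare $\Delta_0\pmap_0$ with $(\pmap_0\otimes\pmap_0)\Delta$ on these generators; the coproducts of $Q(1)=\pmap_0(Q_0)$ and $P(k)=\pmap_0(P^k)$ in $T_0$ are obtained by dualizing the Hopf algebroid coproduct of $\Gamma\us/J^2$ from Lemma \ref{ebpmodisq} and reduce to the familiar coproducts of $Q_0$ and $P^k$.

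Next, exactly as at the prime $2$, I would form the pullback $\pmap_0\os T_\bullet$ of $\partial:T_1\rightarrow T_0$ along $\pmap_0$. This is a crossed algebra whose degree-zero part is the free algebra $B_0$ and which carries a weak equivalence $\pmap_0\os T_\bullet\xrightarrow{\sim}T_\bullet$ that is the identity on $\pi_0=\pi_1=A$. Transporting the folding-product coproduct of $T_\bullet$ along this pullback (using the comultiplicativity of $\pmap_0$) endows $\pmap_0\os T_\bullet$ with a secondary Hopf algebra structure over the free base $B_0$, so that it becomes a candidate realization of the secondary Steenrod algebra.

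The heart of the argument is the computation of the left-action operator $L$ and the symmetry operator $S$ of $\pmap_0\os T_\bullet$ and their identification with those of $B_\bullet$. Both are read off from the linearity defect of the secondary diagonal, i.e.\ from $\nabla$, exactly as in Lemma \ref{Phidef} and Lemma \ref{lsbcomp}. The necessary input is the explicit description of $\Delta t_n$ and $\Delta\tau_n$ modulo $J^2$ from Lemma \ref{ebpmodisq}: the primary terms $\sum t_a\otimes t_b^{p^a}$ and $\sum\tau_a\otimes t_b^{p^a}$ recover the coproduct of $A$, while the correction terms $v_k\gppol_{p^k}(t_a\otimes t_b^{p^a})$ and $w_k\gppol_{p^k}(t_a\otimes t_b^{p^a})$ carry the secondary information. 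Since $w_k=-\partial(\mu_0\mu_k)$ is the $\mu_0$-divisible generator, dualizing the $w_k$-terms produces the $\mu_0$-component of $\nabla$ and hence $L$, whereas the $v_k$-terms feed into $\coz$ and thus into $S=(1+T)\coz$. I would then verify that, under $\pmap_0$, these agree with Baues's operators as characterized in \cite[14.4.3]{baues} and \cite[14.5.2]{baues}: concretely that the dual of $\gppol_{p^k}$, evaluated on $t_a\otimes t_b^{p^a}$, reproduces the left-action formula $L_R(P^n\otimes P^m)=\sum P^{n_1}P^{m_1}\otimes P^{n_2}P^{m_2}$ together with its $Q$-analogue on the Adem generators.

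Finally, with $L$ and $S$ matched I would appeal to the Uniqueness Theorem \cite[15.3.13]{baues} to obtain an isomorphism $B_\bullet\cong\pmap_0\os T_\bullet$ restricting to the identity on the common degree-zero algebra $B_0$; composing with $\pmap_0\os T_\bullet\xrightarrow{\sim}T_\bullet$ then yields the desired weak equivalence $\pmap:B_\bullet\rightarrow T_\bullet$ extending $\pmap_0$. The main obstacle is the third step: it is the odd-primary counterpart of the delicate computation in Lemma \ref{lsbcomp}, and the bookkeeping is heavier here because both families $t_k$ (giving $P$-operations) and $\tau_k$ (giving $Q$-operations) contribute, and because the universal additivity-defect polynomials $\gppol_{p^k}$ must be dualized and matched term by term with the binomial data of the Adem relations.
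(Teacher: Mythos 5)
Your overall skeleton---check that $\pmap_0$ is comultiplicative, pull back $T_\bullet$ along $\pmap_0$ to get $\pmap\os T_\bullet$ with an inherited secondary Hopf algebra structure, and conclude via the Uniqueness Theorem \cite[15.3.13]{baues}---is exactly the paper's. But the step you single out as the heart of the argument rests on a misconception, and the computation you anticipate there would be chasing something that does not exist. At odd primes there is no analogue of the delicate $\nabla$/$\coz$ analysis of Lemma \ref{Phidef} and Lemma \ref{lsbcomp}: the coproduct $\Delta_\Gamma:\Gamma\rightarrow\Gamma\totimes_E\Gamma$ is the $E$-linear dual of the multiplication of the commutative Hopf algebroid $\Gamma\us$, so it is bilinear and symmetric on the nose. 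Hence the restricted secondary diagonal on $P_\bullet$, and therefore on $T_\bullet$ and on $\pmap\os T_\bullet$, has identically vanishing linearity and symmetry defects, $L=S=0$; and Baues's $B_\bullet$ at odd primes has the same vanishing invariants, so the Uniqueness Theorem applies with no matching computation at all. This one observation is essentially the paper's entire proof.

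Two concrete errors in your third step. First, the formulas you intend to reproduce---the $p=2$ value $L_R$ from Lemma \ref{lsbcomp}, transcribed with $P$'s, and its ``$Q$-analogue on the Adem generators''---have no nonzero odd-primary counterpart; if dualizing the $\gppol_{p^k}$-terms ever yielded a nonzero $L$ or $S$ for $\pmap\os T_\bullet$, that would signal a computational mistake, not progress. Second, you misread where the correction terms $v_k\gppol_{p^k}(\cdots)$ and $w_k\gppol_{p^k}(\cdots)$ of Lemma \ref{ebpmodisq} act: they are coefficients in $E$ of an honestly multiplicative coproduct, not defects of linearity or symmetry, and in the paper they are used only in the \emph{preceding} lemma, to verify that $(S_0,S_0[t_k,\tau_k])$ is a sub Hopf algebroid of $(E/J^2,\Gamma\us/J^2)$ so that $T_\bullet$ is in fact closed under the structure maps. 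Your argument is repairable by deletion: drop the matching step, record that $\Delta_1$ is bilinear and symmetric because $\Delta_\Gamma$ is (whence $L=S=0$), note the same vanishing for the odd-primary $B_\bullet$, and then cite \cite[15.3.13]{baues} to get $B_\bullet\cong\pmap\os T_\bullet$ and compose with $\pmap\os T_\bullet\xrightarrow{\sim}T_\bullet$ as you do.
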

      \begin{proof}
        The multiplication on $\Gamma\us$ dualizes to a coproduct
        $\Delta_\Gamma:\Gamma\rightarrow \Gamma\totimes_E\Gamma$
        were $\totimes_E$ denotes a suitably completed tensor product.
        This turns $\Gamma$ into a topological Hopf algebra over $E$.
        We define the completed folding product $(P\cfotimes_E P)_\bullet$
        as the pullback
        \begin{align*}
          \xymatrix{
            A\otimes A \ar@{ >->}[r] 
            & \sfrac{\left(\Gamma\totimes_E\Gamma\right)}{\im \partial_\otimes}
            \ar[r]^-{\partial_\otimes} 
            & \ker\partial_\otimes \ar@{ ->>}[r] 
            & A\otimes A \\
            A\otimes A\ar@{=}[u] \ar@{ >->}[r] 
            & (P\cfotimes_E P)_1\ar@{ >->}[u] 
            \ar[r]^-{\partial_\otimes} 
            & P_0 \totimes_E P_0\ar@{ >->}[u] \ar@{ ->>}[r] 
            & A\otimes A \ar@{=}[u]\\            
          }
        \end{align*}
        where $\partial_\otimes = \partial\otimes\id + \id\otimes\partial$
        is the differential on $\Gamma\totimes_E\Gamma$.
        $\Delta_\Gamma$ then restricts to a coproduct 
        $\Delta_\bullet:P_\bullet \rightarrow (P\cfotimes_E P)_\bullet$.
        %secondary Hopf algebra structure
        %on $P_\bullet$. 
        Note that $\Delta_1$ is bilinear and symmetric, 
        since this is true for $\Delta_\Gamma$.
        By restriction we get a 
        $\Delta_\bullet:T_\bullet\rightarrow (T\cfotimes_S T)_\bullet$
        where the right hand side is given by
        \begin{align*}
          (T\cfotimes_S T)_0 &= S_0\{\!\{ P(R_1)Q(\epsilon_1)\otimes P(R_2)Q(\epsilon_2)\}\!\} \subset (P\cfotimes_E P)_1/J^2, \\
          (T\cfotimes_S T)_1 &= S_1\{\!\{ P(R_1)Q(\epsilon_1)\otimes P(R_2)Q(\epsilon_2)\}\!\} \subset (P\cfotimes_E P)_1/J.
        \end{align*}
        Let $\pmap\os T_\bullet$ be the pullback of $T_\bullet$ along $B_0\rightarrow T_0$.
        It inherits a secondary Hopf algebra structure
        from $T_\bullet$. This structure has $L=S=0$ since 
        the same is true for $P_\bullet$.
        Baues's Uniqueness Theorem thus implies $B_\bullet\cong \pmap\os T_\bullet$.
      \end{proof}
    \end{section}
  \end{appendix}

%%%%%%%%%%%%%%%%%%%%%%%%%%%%%%%%%%%%%%%%%%%%%%%%%%%%%%%%%%%%%%%%%%%%%%%%%%%%%%
%%%%%%%%%%%%%%%%%%%%%%%%%%%%%%%%%%%%%%%%%%%%%%%%%%%%%%%%%%%%%%%%%%%%%%%%%%%%%%
%%%%%%%%%%%%%%%%%%%%%%%%%%%%%%%%%%%%%%%%%%%%%%%%%%%%%%%%%%%%%%%%%%%%%%%%%%%%%%
%%%%%%%%%%%%%%%%%%%%%%%%%%%%%%%%%%%%%%%%%%%%%%%%%%%%%%%%%%%%%%%%%%%%%%%%%%%%%%

%\begin{section}{Bibliography}
  %\bibliographystyle{alpha}
  %\bibliography{dissbib}

\begin{thebibliography}{Bau06}

\bibitem[Bau06]{baues}
Hans-Joachim Baues.
\newblock {\em The algebra of secondary cohomology operations}, volume 247 of
  {\em Progress in Mathematics}.
\newblock Birkh\"auser Verlag, Basel, 2006.
MRNUMBER = 2220189 (2008a:55015)
Zbl 1091.55001

\bibitem[BJ04]{jiblmpi}
H.-J. Baues and M.~Jibladze.
\newblock The algebra of secondary cohomology operations and its dual.
\newblock MPIM2004-111, 2004.
To appear in the {\em Journal of $K$-theory}.

\bibitem[BJ06]{MR2193337}
Hans-Joachim Baues and Mamuka Jibladze.
\newblock Secondary derived functors and the {A}dams spectral sequence.
\newblock {\em Topology}, 45(2):295--324, 2006.
MRNUMBER = {2193337 (2006k:55031)},
Zbl 1095.18003

\bibitem[KM69]{krimad}
Leif Kristensen and Ib~Madsen.
\newblock On the structure of the operation algebra for certain cohomology
  theories.
\newblock In {\em Conf. on {A}lgebraic {T}opology ({U}niv. of {I}llinois at
  {C}hicago {C}ircle, {C}hicago, {I}ll., 1968)}, pages 134--160. Univ. of
  Illinois at Chicago Circle, Chicago, Ill., 1969.
 MRNUMBER = 0250301 (40 \#3540)
Zbl 0212.28102

\end{thebibliography}
%\end{section}

\end{document}